\numberwithin{equation}{section}
\newcommand{\dd}{\mathrm{d}}
\newcommand{\ee}{\mathrm{e}}
\newcommand{\prob}{\mathsf{P}}
\newcommand{\probd}{\mathbb{P}}
\newcommand{\Ex}{\mathsf{E}}
\newcommand{\Exd}{\mathbb{E}}
\newcommand{\Rl}{\mathbb{R}}
\newcommand{\Cm}{\mathbb{C}}
\newcommand{\pae}{\mathbb{P}\mbox{-a.e.\ }\omega}
\newcommand{\N}{\mathbb{N}}
\newcommand{\cov}{\mathsf{cov}}
\newcommand{\var}{\mathsf{var}}
\newcommand{\ga}{\alpha}
\newcommand{\gb}{\beta}
\newcommand{\go}{\omega}
\newcommand{\gL}{\Lambda}
\DeclareMathOperator*{\limsmallspace}{\lim~}
\DeclareMathOperator*{\infp}{\inf\phantom{p}}
\newtheoremstyle{mystyle}
  {}
  {}
  {\itshape}
  {}
  {\bfseries}
  {.}
  { }
  {\thmname{#1}\thmnumber{ #2}\thmnote{ (#3)}}
\theoremstyle{mystyle}
\newtheorem{theorem}{Theorem}[section]
\newtheorem{lemma}[theorem]{Lemma}
\newtheorem{proposition}[theorem]{Proposition}
\newtheorem{corollary}[theorem]{Corollary}
\newtheorem{remark}[theorem]{Remark}
\newtheorem{assumption}{Assumption}[section]
\title[Disordered pinning models with contact number constraint]{
Pinning models with contact number constraint: \\ the effect of disorder
}
\author{Giambattista Giacomin and Marco Zamparo}
\date{}
\begin{document}

\begin{abstract}
Disordered pinning models are statistical mechanics models built on
discrete renewal processes: renewal epochs in this context are called
\emph{contacts}.  It is well known that pinning models can undergo a
localization/delocalization phase transition: in the localized phase
the typical density of contacts is positive and the largest gap
between contacts is at most of the order of the logarithm of the size
$n$ of the system, whereas the system is void of contacts in the
delocalized phase.  When disorder is absent and the phase transition
is discontinuous, conditioning the contact density to be positive but
smaller than the minimum typical density in the localized phase has
the effect of forcing to create one, and only one, macroscopic gap
between two contacts, while the rest of the configuration keeps the
characteristics of a localized state.  However it is known that, in
the presence of (bounded) disorder, this \emph{big jump phenomenon} is
no longer observed, in the sense that the largest gap in the
conditioned system is $o(n)$. Under minimal integrability conditions
on the disorder we show that the conditioned system is localized in a
very strong sense, in particular the largest gap is $O(\log n)$. The
proof is achieved by exploiting the improved understanding of the
localized phase of disordered pinning models developed in
\cite{cf:companionpaper} and by establishing some refined estimates,
in particular a quenched Local Central Limit Theorem. We also present
an analysis of the effect of disorder on the large deviation rate
function of the contact density and, as an important ingredient for
the generality of our results, we establish a smoothing inequality for
pinning models under minimal integrability conditions, thus
generalizing
 \cite{giacomin2006_2,caravenna2013}.
 \smallskip
 
\noindent  \emph{AMS  subject classification (2020 MSC)}:
60K37,  
82B44, 
60K35, 
60F05  	

\smallskip
\noindent
\emph{Keywords}: Disordered Pinning Model, Big Jump Phenomenon, Disorder Smoothing, 
Local Central Limit Theorem
\end{abstract}

\maketitle                              


\section{Introduction and main results}

\subsection{The pinning model}
\label{sec:intro-model}

Let $T_1,T_2,\cdots$ be i.i.d.\ $\N$-valued random variables on a
probability space $(\mathcal{S},\mathfrak{S},\prob)$. Here and below
$\N:=\{1,2,\ldots\}$ and $\N_0:=\N\cup\{0\}$. Looking upon
$T_1,T_2,\ldots$ as inter-arrival times, the associated random walk
defined by $S_0:=0$ and $S_i:=T_1+\cdots+T_i$ for $i\in\N$ identifies
the renewal times of a renewal process. Hence the number of renewals
by time $n\in\N_0$ is
\begin{equation*}
  L_n:=\sup\big\{i\in \N_0:\, S_i\le n\big\}\,.
\end{equation*}
The model we consider is built on the renewal sequence
$S:=\{S_i\}_{i\in\N_0}$ and depends on a parameter $h\in\Rl$ and on a
sequence of real numbers $\omega:=\{\omega_a\}_{a\in\N_0}$, which we
call \emph{charges}.  In fact, the \textit{pinning model} is defined
for $n\in\N_0$ by the probability $\prob_{n,h,\omega}$ that satisfies
\begin{equation}
  \frac{\dd\prob_{n,h,\omega}}{\dd\prob}:=\frac{1}{Z_{n,h}(\omega)}\ee^{\sum_{i=1}^{L_n}(h+\omega_{S_i})}\mathds{1}_{\{n\in S\}}\,,
\label{eq:prob_model}
\end{equation}
provided that the \textit{partition function}
$Z_{n,h}(\omega):=\Ex[\ee^{\sum_{i=1}^{L_n}(h+\omega_{S_i})}\mathds{1}_{\{
    n\in S\}}]$ is not 0. The set $\{n \in S\}\in \mathfrak{S}$ is the
event that a renewal time coincides with $n$. Moreover, here and in
what follows, the convention is made that empty sums are 0 and empty
products are 1.

\smallskip

The assumptions on the law of inter-arrival times and the charges are
the following.

\medskip

\begin{assumption}
  \label{assump:p}
 $p(t):=\prob[T_1=t]={\ell(t)}/{t^{\alpha+1}}$ for $t\in\N$ with
  $\alpha\ge 1$ and a slowly varying function at infinity
  $\ell$. Moreover, $p(t)>0$ for all $t\in\N$.
\end{assumption}

\medskip

We recall that a real measurable function $\ell$, defined on
$[1,+\infty)$ in our case, is slowly varying at infinity if
  $\ell(z)>0$ for all sufficiently large $z$ and $\lim_{z \uparrow
    +\infty}\ell(\lambda z)/\ell(z)=1$ for every $\lambda>0$
  \cite{bingham1989}. We assume that $p(t)>0$, i.e., $\ell(t)>0$, for
  all $t\in\N$, which in particular guarantees that
  $Z_{n,h}(\omega)>0$ for every $n$ and every $\omega$.  

  \medskip
  
\begin{assumption}
  \label{assump:omega}
  The charges $\omega:=\{\omega_a\}_{a\in\N_0}$ are sampled
  from a probability space $(\Omega,\mathcal{F},\probd)$ in such a way
  that the canonical projections $\omega\mapsto\omega_a$ form a
  sequence of i.i.d.\ random variables. Furthermore,
  $\int_\Omega\ee^{\eta|\omega_0|}\probd[\dd\omega]<+\infty$ for some
  number $\eta>0$ and $\int_\Omega\omega_0\probd[\dd\omega]=0$. 
\end{assumption}

\medskip

We underline the \emph{quenched} nature of the model, i.e., the fact
that the model is defined for every $n$ once the charges are sampled
and we look for results that hold for typical realizations of them. Of
course the charges introduce a non translation invariant character in
the model: they are meant to take into account \emph{impurities}, more
modernly called \emph{disorder}, that should not be forgotten when
going toward more realistic models.  In our case $h$ represents the
average pinning potential, but the true pinning potential $h+\omega_a$
is not constant and it is site dependent.  With this language, we say
that the model is \emph{pure} or \emph{non disordered} if
$\probd[\{\omega \equiv 0\}]=1$.  The pure pinning model is summarized
by $\prob_{n,h,0}$ and is explicitly solvable (see
\cite{giacomin2007,cf:G-SF,cf:dH}).

\smallskip

There exists an extensive literature on pinning models, with or
without disorder, starting from the $60$s.  We refer to the reviews
\cite{giacomin2007,cf:G-SF,cf:dH,cf:Velenik} for extended
bibliographies and for discussions on the origin and relevance of
pinning models in applied sciences, notably physics and biology.  In
short, the renewal set is interpreted as the ensemble of sites, the
\emph{contacts}, at which an interface, a polymer or another
fluctuating linear structure touches a \emph{defect line}, receiving a
penalty, respectively a reward, at the contact site $a$ if
$h+\omega_a<0$, respectively if $h+\omega_a>0$. With this language,
the size $L_n$ of the contact set is referred to as \emph{contact
number}.

\smallskip

It is possible, and convenient when deriving correlation estimates, to
rewrite the model in terms of the binary random variables
$X_a:=\mathds{1}_{\{a \in S\}}$ for $a\in\N_0$, which
take value 1 at the contact sites and value 0 at the other sites. Note
that $X_0=1$ as $S_0:=0$ by definition. With this notation, we have
that \eqref{eq:prob_model} is equivalent to
\begin{equation}
  \frac{\dd\prob_{n,h,\omega}}{\dd\prob}=\frac{1}{Z_{n,h}(\omega)}\ee^{\sum_{a=1}^n(h+\omega_a)X_a}X_n
  \label{eq:model_binary}
\end{equation}
with $Z_{n,h}(\omega)=\Ex[\ee^{\sum_{i=1}^n(h+\omega_a)X_a}X_n]$, and
the contact number reads $L_n=\sum_{a=1}^nX_a$.

\medskip

\subsection{The free energy density}
\label{sec:fe-density}

Under Assumptions \ref{assump:p} and \ref{assump:omega}, a
superadditivity argument shows that the limit defining the \emph{free
energy density} (we will simply say \emph{free energy} henceforth)
\begin{equation*}
  f(h):= \lim_{n\uparrow\infty}\Exd\bigg[\frac{1}{n}\log Z_{n,h}\bigg]
\end{equation*}
exists and is finite for all $h\in\Rl$ (see \cite{giacomin2007},
Theorem 4.6). We list a number of elementary, but important facts:

\begin{enumerate}[leftmargin=0.7 cm]

\item the function $f$ that maps $h$ to $f(h)$ is non-decreasing and
  convex. Moreover, it is Lipschitz continuous with Lipschitz constant
  equal to 1;
  
\item $Z_{n,h}(\omega)\ge\ee^{h+\omega_n}p(n)$ and
  $Z_{n,h}(\omega)\ge\ee^{\sum_{a=1}^n(h+\omega_a)}p(1)^n$ for all
  $n\in\N$, $h\in\Rl$, and $\omega:=\{\omega_a\}_{a\in\N_0}\in\Omega$,
  so $f(h)\ge \max\{0,h+\log p(1)\}$;

\item $Z_{n,h}(\omega)\le \ee^{\sum_{a=1}^n\max\{0,h+\omega_a\}}$ for
  all $n\in\N$, $h\in\Rl$, and
  $\omega:=\{\omega_a\}_{a\in\N_0}\in\Omega$, so $f(h)\le\int_\Omega
  \max\{0,h+\omega_0\}\probd[\dd\omega]$.
\end{enumerate}

These facts entail that if we set $h_c:=\inf\{h\in\Rl:f(h)>0\}$, then
$-\infty\le h_c<+\infty$, $f(h)=0$ for $h\le h_c$ (when
$h_c>-\infty$), and $\lim_{h\downarrow h_c}f(h)=0$ both when
$h_c>-\infty$ and when $h_c=-\infty$.  We refer to
\cite[Proposition~1.1]{cf:companionpaper} for an almost optimal
criterion to decide whether $h_c>-\infty$ or not: in particular,
$h_c>-\infty$ if $\eta>1/(1+\ga)$ with $\eta$ given in
Assumption~\ref{assump:omega}.  With the definition of $h_c$ at hand,
we say that the model is
\begin{itemize}
\item \emph{delocalized}, or that it is in the \emph{delocalized phase}, if $h<h_c$ (when $h_c>-\infty$);
\item  \emph{critical} if $h=h_c$ (when $h_c>-\infty$);  
\item  \emph{localized}, or that it is in the \emph{localized phase}, if $h>h_c$.
\end{itemize}
Note that $h_c=-\infty$ implies that the model is localized for every $h$.

\medskip

The free energy function in the localized phase has been investigated
in \cite{giacomin2006_1,cf:companionpaper}. We reproduce here the
following findings that are particularly relevant for us (see
\cite[Theorem 1.2]{cf:companionpaper}).

\medskip

\begin{theorem}
  \label{th:Cinfty}
  The free energy $f$ is strictly convex and infinitely
  differentiable on $(h_c,+\infty)$, and the following property holds
  for $\pae$: for every compact set $H\subset (h_c,+\infty)$ and
  $r\in\N_0$
  \begin{equation*}
    \adjustlimits\lim_{n\uparrow\infty}\sup_{h\in H}\bigg|\frac{1}{n}\partial^r_h\log Z_{n,h}(\omega)-\partial_h^r f(h)\bigg|=
    \adjustlimits\lim_{n\uparrow\infty}\sup_{h\in H}\bigg|\Exd\bigg[\frac{1}{n}\partial^r_h\log Z_{n,h}\bigg]-\partial_h^r f(h)\bigg|=0\,.
  \end{equation*}
  Moreover, $f$ is of class Gevrey-3 on $(h_c,+\infty)$, i.e., for
  every compact set $H\subset (h_c,+\infty)$ there exists a constant
  $c>0$ such that for all $r\in\N$
 \begin{equation*}
 \sup_{h \in H} \Big| \partial_h^r f(h)\Big| \le c^r (r!)^3\,.
 \end{equation*}

\end{theorem}

\medskip

We will refer to the first derivative $\rho:=\partial_hf$ of $f$ in
the localized phase as the \textit{contact density} because, bearing
in mind that $\Ex_{n,h,\omega}[L_n]=\partial_h\log Z_{n,h}(\omega)$,
Theorem~\ref{th:Cinfty} shows that the following holds for $\pae$: for
every $h>h_c$
 \begin{equation}
   \lim_{n\uparrow\infty}\Ex_{n,h,\omega}\bigg[\frac{L_n}{n}\bigg]=\partial_hf(h)=:\rho(h)\,.
   \label{eq:contact_fraction}
 \end{equation}
The second derivative $v:=\partial_h^2f$ of $f$ in the localized phase
is the limiting scaled variance of the contact number. In fact, as
$\Ex_{n,h,\cdot}[(L_n-\Ex_{n,h,\cdot}[L_n])^2]=\partial_h^2\log
Z_{n,h}$, Theorem~\ref{th:Cinfty} implies that also the following is
valid for $\pae$: for every $h>h_c$
\begin{equation}
  \lim_{n\uparrow\infty}\Ex_{n,h,\omega}\bigg[\bigg(\frac{L_n-\Ex_{n,h,\omega}[L_n]}{\sqrt{n}}\bigg)^{\!\!2}\bigg]=\partial_h^2f(h)=:v(h)\,.
\label{eq:lim_vh}
\end{equation}
We see that $\rho$ takes values in $[0,1]$ and the strict convexity of
the free energy in the localized phase implies that $\rho$ is strictly
increasing and $v$ strictly positive over $(h_c,+\infty)$. Then we
infer that $\rho(h)>0$ for all $h>h_c$, in agreement with the
intuitive notion of localized phase and in contrast with the fact that
$\lim_{n\uparrow\infty}\Ex_{n,h,\omega}[L_n/n]=0$ in the delocalized
phase. We note that $\lim_{h\uparrow+\infty}\rho(h)=1$ since combining
convexity and differentiability of $f$ with the bound
$f(h)\ge\max\{0,h+\log p(1)\}$ we deduce that $f(0)\ge\max\{0,h+\log
p(1)\}-h\rho(h)$ for $h>h_c$, which in turn gives
$\liminf_{h\uparrow+\infty}\rho(h)\ge 1$.

The limit $\rho_c:=\lim_{h\downarrow h_c}\rho(h)$ is of main
importance for us. When $h_c=-\infty$, the above bound
$f(0)\ge\max\{0,h+\log p(1)\}-h\rho(h)$ for $h>h_c$ shows that
$\limsup_{h\downarrow h_c}\rho(h)\le 0$, and hence $\rho_c=0$. When
$h_c>-\infty$, the model undergoes a localization/delocalization phase
transition that is \textit{continuous} if $\rho_c=0$ and
\textit{discontinuous}, or \textit{first order}, if $\rho_c>0$.

\medskip

\subsection{Large deviations for the contact number}
\label{sec:LDP}

The function $I_h$ that maps $r\in\Rl$ to
\begin{equation}
\label{eq:I_h}
I_h(r):=\sup_{k\in\Rl}\big\{r(k-h)-f(k)+f(h)\big\}
\end{equation}
turns out to be a rate function in a Large Deviation Principle
(LDP). We note that $I_h$ is a way to write the convex conjugate of
$f(h+\cdot)-f(h)$, the latter being the limiting scaled cumulant
generating function of the contact number.  In fact, we have the
following result under Assumptions~\ref{assump:p}
and~\ref{assump:omega} (see \cite[Theorem 1.4 and Corollary
  1.11]{hollander2024}).

\medskip

\begin{proposition}
\label{th:LD}
  The following bounds defining a full LDP for the contact density
  hold for $\pae$:
\begin{enumerate}[leftmargin=0.7 cm, itemsep=1ex,label=({\roman*})]
\item $\liminf_{n\uparrow\infty} (1/n)\log\prob_{n,h,\omega}[L_n/n\in G] \ge-\inf_{r\in G} I_h(r)$ for all $h\in\Rl$ and $G\subseteq\Rl$ open;
\item $\limsup_{n\uparrow\infty} (1/n)\log\prob_{n,h,\omega}[L_n/n\in F] \le-\inf_{r\in F} I_h(r)$ for all $h\in\Rl$ and $F\subseteq\Rl$ closed.  
\end{enumerate}
\end{proposition}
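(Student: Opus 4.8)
The plan is to recognise Proposition~\ref{th:LD} as an instance of the G\"artner--Ellis theorem and to repair the one place where that theorem is not directly applicable. Since $\Ex_{n,h,\omega}[\ee^{kL_n}]=Z_{n,h+k}(\omega)/Z_{n,h}(\omega)$, the scaled cumulant generating function of $L_n$ under $\prob_{n,h,\omega}$ is $\tfrac1n\log\Ex_{n,h,\omega}[\ee^{kL_n}]\to f(h+k)-f(h)=:\Lambda(k)$, valid simultaneously in $k$ for $\pae$ by the $\pae$ locally uniform convergence $\tfrac1n\log Z_{n,\cdot}(\omega)\to f$ --- which is Theorem~\ref{th:Cinfty} (case $r=0$) on compacts of $(h_c,+\infty)$ and the standard $\pae$ free energy limit together with convexity elsewhere. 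The convex conjugate of $\Lambda$ is exactly $I_h$, so $I_h$ is convex, lower semicontinuous, finite on $[0,1]$ and $+\infty$ off it. Because $L_n/n\in[0,1]$, exponential tightness is automatic and the G\"artner--Ellis upper bound gives (ii); it also gives (i) at every exposed point of $I_h$. Concretely, for a density $r$ with $r=\rho(\tilde h)=\partial_hf(\tilde h)$ for some $\tilde h>h_c$ --- i.e. $r\in(\rho_c,1)$, and any $r\in(0,1)$ when $\rho_c=0$ --- I would tilt to $\prob_{n,\tilde h,\omega}$: by Theorem~\ref{th:Cinfty} one has $L_n/n\to r$ under $\prob_{n,\tilde h,\omega}$ with variance of $L_n$ of order $n$, so $\prob_{n,\tilde h,\omega}[\,|L_n/n-r|<\gep\,]\ge\tfrac12$ eventually; reverting the tilt, using $\tfrac1n\log (Z_{n,\tilde h}(\omega)/Z_{n,h}(\omega))\to f(\tilde h)-f(h)$, then letting $\gep\downarrow0$ (and noting that $\tilde h$ realises the supremum defining $I_h(r)$) yields $\liminf_n\tfrac1n\log\prob_{n,h,\omega}[L_n/n\in G]\ge-I_h(r)$ for every open $G\ni r$.

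What is missing is the lower bound for densities $r\in(0,\rho_c]$, which occur only in the discontinuous case $\rho_c>0$ and form precisely the segment on which $I_h$ is affine --- the non-exposed, ``flat'' part, where G\"artner--Ellis is silent. Here I would produce the matching bound by a \emph{big jump} construction. Fix $\gd>0$, put $\tilde h_\gd:=\rho^{-1}(\rho_c+\gd)>h_c$ and $M_n:=\lfloor rn/(\rho_c+\gd)\rfloor$, and restrict the renewal to the event $\{T_1=n-M_n\}$ (a single macroscopic gap, of length $\asymp(1-r/(\rho_c+\gd))n$) followed by a localized sub-system of length $M_n$ run at parameter $\tilde h_\gd$, whose contact number concentrates at $(\rho_c+\gd)M_n\approx rn$ by Theorem~\ref{th:Cinfty}. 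Using the renewal structure, the $\prob$-weight of this event factorises as $p(n-M_n)\,\ee^{h+\omega_{n-M_n}}$ times $Z_{M_n,\tilde h_\gd}(\theta^{n-M_n}\omega)$ for the shifted environment, times the sub-system probability of landing at $\approx rn$ contacts (which is $\ge\tfrac12$ eventually); the gap contributes $\tfrac1n\log p(n-M_n)\to0$ and the sub-system contributes $\tfrac{r}{\rho_c+\gd}f(\tilde h_\gd)$. Sending $\gep\downarrow0$ and then $\gd\downarrow0$ --- so $\tilde h_\gd\downarrow h_c$ and, by the regularity of $f$ near $h_c$ from Theorem~\ref{th:Cinfty}, $f(\tilde h_\gd)\to f(h_c)=0$ --- gives $\liminf_n\tfrac1n\log\prob_{n,h,\omega}[L_n/n\in G]\ge r(h-h_c)-f(h)=-I_h(r)$ for every open $G\ni r$. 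Combined with the trivial cases $r\notin[0,1]$ (then $I_h=+\infty$ and the event is empty) and $r=1$ (reached by $r\uparrow1$ and continuity of $I_h$ at $1$), and the elementary fact that $\inf_GI_h=\inf\{I_h(r):r\in G\cap[0,1]\}$ for open $G$, these pointwise estimates assemble into (i).

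For self-containedness I would also give the upper bound directly: from $\prob_{n,h,\omega}[L_n=m]\le\ee^{m(h-h')}Z_{n,h'}(\omega)/Z_{n,h}(\omega)$, valid for every $h'$, I bound (for each fixed $m$) the right-hand side by its infimum over $h'$ in a large compact interval, using the uniform-on-compacts convergence of $\tfrac1n\log Z_{n,\cdot}(\omega)$ to replace $\tfrac1n\log Z_{n,h'}(\omega)$ by $f(h')+o(1)$ uniformly; only then do I maximise over the at most $n+1$ relevant $m$, and finally enlarge the interval, obtaining $\limsup_n\tfrac1n\log\prob_{n,h,\omega}[L_n/n\in F]\le-\inf_{r\in F\cap[0,1]}I_h(r)=-\inf_{r\in F}I_h(r)$. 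The last housekeeping point is that all the $\pae$ limits invoked above --- locally uniform convergence of $\tfrac1n\log Z_{n,\cdot}(\omega)$ and of its first two $h$-derivatives, \emph{including for the shifted environments $\theta^{g}\omega$ appearing in the big jump step} --- should hold on a single $\probd$-full-measure set; I would arrange this by combining Theorem~\ref{th:Cinfty} with standard concentration-of-measure bounds for pinning log-partition functions, uniform in the block position, and Borel--Cantelli, the block lengths being $\asymp n$, and dispose of the $h$- and $G,F$-dependence by density and continuity.

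The hard part is the flat-part lower bound: forcing a contact density strictly below the smallest typical localized density necessarily creates one macroscopic gap, and one must control the resulting two-block \emph{quenched} partition function --- in particular the shifted disorder uniformly over the gap position --- and check that the entropic price of the gap is $o(n)$ while the near-critical block contributes exactly $\tfrac{r}{\rho_c}f(h)\to0$ as $h\downarrow h_c$. This is the very ``big jump'' the paper analyses, and it leans essentially on the behaviour of $f$ at $h_c$ and on the contact-number law of large numbers from Theorem~\ref{th:Cinfty}. When $\rho_c=0$ --- the continuous transition, or $h_c=-\infty$ --- there is no flat piece, $\Lambda$ is differentiable and essentially smooth, and bare G\"artner--Ellis already delivers the full LDP.
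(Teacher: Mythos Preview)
Your approach is correct but differs from the paper's. The paper does not prove Proposition~\ref{th:LD} from scratch: it cites \cite[Theorem~1.4 and Corollary~1.11]{hollander2024} for the full LDP at each fixed $h$, and then, in the Remark that follows, obtains the stated uniformity in $h$ on a single $\probd$-full set by observing that $\prob_{n,h,\omega}$ is an exponential change of measure of $\prob_{n,0,\omega}$ governed by $L_n$ itself, so the good-rate LDP at $h=0$ transfers to every $h$ via \cite[Theorem~III.17]{hollander_LDP}. Your route---G\"artner--Ellis on the exposed part of $I_h$ plus an explicit big-jump lower bound on the affine stretch---is self-contained and makes transparent the mechanism behind the flat piece, which is precisely the phenomenon the paper is about.

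One simplification you overlooked: by Theorem~\ref{th:smoothing}, $\rho_c>0$ forces $\int_\Omega\omega_0^2\,\probd[\dd\omega]=0$, hence $\omega\equiv 0$ $\probd$-a.s., so the big-jump step is only ever needed in the \emph{pure} model. The shifted-environment control you flag (concentration for $\log Z_{M_n,\tilde h_\delta}(\vartheta^{n-M_n}\omega)$ and for the sub-system contact density, uniformly in the gap position) is therefore vacuous, and your construction goes through without any concentration or Borel--Cantelli apparatus. With that remark your sketch is complete; the paper's route is shorter but leans on the external reference, while yours illuminates why the affine stretch of $I_h$ corresponds to a single macroscopic gap.
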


\medskip

One can show that $I_h(r)=+\infty$ for $r\notin[0,1]$ in a number
of ways, so the effective domain of $I_h$, i.e., the region where
$I_h$ is finite, is contained in the closed interval $[0,1]$.  This
is, for example, a consequence of part $(i)$ in
Proposition~\ref{th:LD} since $0\le L_n\le n$.  Of course a bounded
effective domain for $I_h$ directly entails that, in the language of
large deviation theory, the rate $I_h$ is \emph{good}, i.e., its
sub-level sets are compact.

The effective domain of $I_h$ is exactly the closed interval
$[0,1]$. In fact $I_h$ is convex and finite at the boundary of such
interval: $I_h(0)=f(h)$ and $I_h(1)=f(h)-h-\log p(1)$. To see this we
note that both $f(k)$ and $k-f(k)$ are non-decreasing with respect to
$k$ with $\lim_{k\downarrow-\infty}f(k)=0$. Moreover, using that
$p(t)\le p(1)^{2-t}$ for every $t\in\N$ one can demonstrate that
$Z_{n,k}(\omega)\le
p(1)\ee^{k+\omega_n}\prod_{a=1}^{n-1}[1/p(1)+\ee^{k+\omega_a}p(1)]$
for all $n\in\N$, $k\in\Rl$, and
$\omega:=\{\omega_a\}_{a\in\N_0}\in\Omega$, so $f(k)\le\int_\Omega
\log[1/p(1)+\ee^{k+\omega_0}p(1)]\probd[\dd\omega]$. Combining the
latter with the bound $f(k)\ge k+\log p(1)$ one finds that
$\lim_{k\uparrow+\infty}[k-f(k)]=-\log p(1)$.

Of major interest is the shape of $I_h$ over the interior of its
effective domain. To determine it, we recall that $\rho:=\partial_hf$
is strictly increasing over $(h_c,+\infty)$ from $\rho_c$ to 1, so it
has an inverse function $\imath_\rho$ that maps the interval
$(\rho_c,1)$ onto the interval $(h_c,+\infty)$. Since the free energy
$f$ is infinitely differentiable in the localized phase by Theorem
\ref{th:Cinfty}, $\imath_\rho$ is infinitely differentiable because of
the inverse function theorem.  One can easily verify that
$\imath_\rho$ allows us to write $I_h$ for every $h\in\Rl$ as follows:
\begin{equation}
  I_h(r)=\begin{cases}
r(h_c-h)+f(h) & \mbox{if $r\in[0,\rho_c]$}\\
r[\imath_\rho(r)-h]-f\circ\imath_\rho(r)+f(h)  & \mbox{if $r\in(\rho_c,1)$}\\
f(h)-h-\log p(1)  & \mbox{if $r=1$}
\end{cases}
\label{eq:I_h_esplicita}
\end{equation}
with the convention that $r(h_c-h)=0$ when $r=0$ and
$h_c=-\infty$.

The rate function $I_h$ of the contact density turns out to be
continuously differentiable on $(0,1)$ and infinitely differentiable
and strictly convex on $(\rho_c,1)$, the strict convexity being
manifest from the formula $\partial_r^2
I_h(r)=1/\partial_h^2f(\imath_\rho(r))>0$ valid for all
$r\in(\rho_c,1)$.  Furthermore, $I_h$ is of class Gevrey-3 on
$(\rho_c,1)$, inheriting this property from $f$ thanks to Theorem
\ref{th:Cinfty} since Gevrey classes are closed under inversion and
composition (closure under composition is already in the work by
Maurice Gevrey \cite{cf:Gevrey} and one can find closure under
differentiation and passing to reciprocal, hence also about inversion,
in \cite[pages~94-95]{cf:primer}, along with the references to the
original literature). Importantly, when $\rho_c>0$, i.e., when the
model undergoes a first order phase transition, the rate function
$I_h$ exhibits an affine stretch terminating at $\rho_c$, so two
different regimes of large deviations appear corresponding to strict
and non-strict convexity of $I_h$. A natural question concerns the
meaning of these two regimes.

\medskip

\begin{remark}
\rm{We stress that Proposition \ref{th:LD}, like Theorem
  \ref{th:Cinfty} and all the results that follow, is uniform with
  respect to the free parameter $h$: we identify a set of charges of
  full probability for which the results hold for every $h\in\Rl$ or
  every $h>h_c$ depending on the situation. In the case of Proposition
  \ref{th:LD}, such a uniformity is not immediate from the results in
  \cite{hollander2024}, where there is no a free parameter like
  $h$. What \cite[Theorem 1.4 and Corollary 1.11]{hollander2024}
  really state is that for each $h\in\Rl$ there exists a set of
  charges $\Omega_h\in\mathcal{F}$ with $\probd[\Omega_h]=1$ such that
  the large deviation bounds $(i)$ and $(ii)$ hold for all
  $\omega\in\Omega_h$.  But then one realizes that the set $\Omega_0$
  corresponding to no free parameter works for all $h$. In fact, the
  model $\prob_{n,h,\omega}$ can be regarded as an exponential change
  of measure of the model $\prob_{n,0,\omega}$:
\begin{equation*}
  \frac{\dd \prob_{n,h,\omega}}{\dd \prob_{n,0,\omega}}=\frac{\ee^{hL_n}}{\Ex_{n,0,\omega}[\ee^{hL_n}]}\,.
\end{equation*}
Since the change of measure is governed by the contact number, the
full LDP for the contact density with good rate function in the model
$\prob_{n,0,\omega}$ with a given $\omega$ implies a full LDP with
good rate function in the model $\prob_{n,h,\omega}$ with the same
$\omega$ (see \cite[Theorem III.17]{hollander_LDP}). It therefore
follows that $\Omega_h\supseteq\Omega_0$ because of the uniqueness of
the rate function.}
\end{remark}

\medskip

\subsection{The big jump phenomenon for pure models}

The pure model is explicitly solvable, as mentioned earlier.  In the
absence of disorder we have $h_c=0$ and the free energy $f(h)$ for
$h>h_c$ is the unique positive real number that solves the equation
$\Ex[\ee^{-f(h) T_1}]=\ee^{-h}$ (see \cite[Proposition
  1.1]{giacomin2007}). The real analytic implicit function theorem
shows that the function $f$ is analytic for $h>h_c$, and one can
easily verify that $\rho_c:=\lim_{h\downarrow
  h_c}\rho(h)=\lim_{h\downarrow h_c}\partial_hf(h)=1/\Ex[T_1]$ if
$\Ex[T_1]<+\infty$ and $\rho_c=0$ if $\Ex[T_1]=+\infty$.  So there is
a localization/delocalization phase transition, which is first order
when $\Ex[T_1]<+\infty$. In this case the rate function $I_h$ of the
contact density has an affine stretch terminating at $\rho_c$
according to formula \eqref{eq:I_h_esplicita} and two different
regimes of large deviations occur.  In all cases the rate function is
strictly convex and analytic in the region $(\rho_c,1)$.

\smallskip

An explanation of the two different regimes of large deviations is
achieved by investigating the maximal gap between contacts
$M_n:=\max\{T_1,\ldots,T_{L_n}\}$ conditional on a given number $l$ of
contacts. The following proposition, which is proved in Appendix
\ref{proof:no disorder}, lists the main results of such investigation.
We note that the presence of the parameter $h$ in this proposition,
and in Theorem \ref{th:main} below, is just a presentation choice
because the sharp conditioning disables such parameter:
$\prob_{n,h,\omega}[\,\cdot\,|L_n=l]$ is independent of $h$ whatever
$n$, $l$, and $\omega$ are. Let $\imath_\rho$ be the inverse function
of the contact density $\rho$ as previously defined.

\medskip

\begin{proposition}
  \label{prop:no_disorder}
  The following conclusions hold:
    \begin{enumerate}[leftmargin=0.7 cm, itemsep=1ex,label=({\roman*})]
  \item for every $h\in\Rl$, closed set $R\subset(\rho_c,1)$, and $\epsilon>0$
\begin{equation*}
  \adjustlimits\limsmallspace_{n\uparrow\infty} \sup_{l\in nR\cap \N }
  \,\prob_{n,h,0}\bigg[\bigg|\frac{M_n}{\log n}-\frac{1}{f\circ\imath_\rho(l/n)}\bigg|>\epsilon\bigg|L_n=l\bigg]=0\,;
\end{equation*}
\item if $\rho_c>0$, then for every $h\in\Rl$, closed set $R\subset(0,\rho_c)$, and $\epsilon>0$
  \begin{equation*}
  \adjustlimits\limsmallspace_{n\uparrow\infty} 
  \sup_{l\in nR\cap \N }\,\prob_{n,h,0}\bigg[\bigg|\frac{M_n}{n}-\left(1-\frac{l/n}{\rho_c}\right)\bigg|>\epsilon\bigg|L_n=l\bigg]=0\,.
\end{equation*}
\end{enumerate}
\end{proposition}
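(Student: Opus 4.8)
The starting point is the observation, recorded in the statement, that $\prob_{n,h,0}[\,\cdot\mid L_n=l]$ does not depend on $h$ and coincides with the law of the renewal increments $(T_1,\dots,T_l)$ conditioned on $\{S_l=n\}$ — the probability on $\{(t_1,\dots,t_l)\in\N^l:t_1+\dots+t_l=n\}$ proportional to $\prod_{i=1}^lp(t_i)$ — under which $M_n=\max\{T_1,\dots,T_l\}$. The two parts of $R$ correspond to the strictly convex and the affine stretches of the rate function $I_h$ in \eqref{eq:I_h_esplicita} and require genuinely different arguments. Two classical inputs will be used repeatedly, both available because the renewal sequence is aperiodic (as $p(1)>0$): local limit estimates — for an aperiodic $\N$-valued i.i.d.\ sum of $k$ terms, a uniform upper bound $\sup_m\prob[\,\cdot=m]\le C/b_k$ on the mass at a point and the matching lower bound $\prob[\,\cdot=m]\ge c/b_k$ when $m$ lies within $O(b_k)$ of the mean, with $b_k=\sqrt k$ if the variance is finite and $b_k$ the appropriate stable normalisation otherwise — and the left-tail large deviation bound $\prob[\,W_k\le\Ex[W_k]-\delta k\,]\le\ee^{-c(\delta)k}$ for $W_k:=T_1+\dots+T_k$, valid for every $\delta>0$ since $\Ex[\ee^{-\lambda T_1}]\le\ee^{-\lambda}<\infty$ for all $\lambda>0$.

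\emph{Part (i).} Fix $l$ with $r:=l/n\in R$ and put $h:=\imath_\rho(r)$, so $\rho(h)=r$. Since for the pure model $f(h)$ is characterised by $\Ex[\ee^{-f(h)T_1}]=\ee^{-h}$ (and $h_c=0$), the exponentially tilted law $\tilde p(t):=p(t)\,\ee^{-f(h)t+h}$ is a genuine probability on $\N$, with mean $\tilde\mu=1/\rho(h)=n/l$ and tail $\tilde\prob[\tilde T_1>a]\asymp\ell(a)\,\ee^{-f(h)a}/a^{\alpha+1}$, hence with decay rate exactly $f(h)=f\circ\imath_\rho(r)$ and all exponential moments below $f(h)$. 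Because $t_1+\dots+t_l=n$ is fixed, the tilting factors cancel on this slice, so the conditioned law is the same whether computed from $p$ or from $\tilde p$; moreover $\tilde\prob[\tilde S_l=n]\ge c/\sqrt l$ by the local CLT, as $n=l\tilde\mu$ is exactly the mean. I would then prove two one-sided bounds. For the upper bound, a union bound over the $l$ increments together with $\sup_m\tilde\prob[\tilde S_{l-1}=m]\le C/\sqrt l$ gives $\prob[M_n>a\mid S_l=n]\le C'l\,\tilde\prob[\tilde T_1>a]\le C''l\,\ee^{-(f(h)-\delta)a}$, which for $a=(1/f(h)+\epsilon)\log n$ and $\delta$ small is $o(1)$. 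For the lower bound, factorising through the normalised truncated increment law $\bar p_a$ yields $\tilde\prob[M_n\le a,\tilde S_l=n]=(\tilde q_a)^l\,\bar\prob_a[\bar S_l=n]$ with $\tilde q_a:=\tilde\prob[\tilde T_1\le a]$; here $\bar\prob_a[\bar S_l=n]\le C/\sqrt l$ uniformly in $a$ (the laws $\bar p_a$ converge to $\tilde p$ with variances bounded below, so the local upper bound is uniform), while $(\tilde q_a)^l\le\exp(-l\,\tilde\prob[\tilde T_1>a])$ is super-polynomially small for $a=(1/f(h)-\epsilon)\log n$, and dividing by $\tilde\prob[\tilde S_l=n]\ge c/\sqrt l$ concludes. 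All constants depend on $r$ only through $f(h)$, which stays in a compact subset of $(0,+\infty)$ as $r$ ranges over the closed set $R\subset(\rho_c,1)$, so the bounds are uniform over $l\in nR\cap\N$.

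\emph{Part (ii).} Now $r:=l/n\in R\subset(0,\rho_c)$, and $\rho_c>0$ forces $\mu:=\Ex[T_1]=1/\rho_c<+\infty$ for the pure model; set $m^*:=n(1-r/\rho_c)=n-l\mu$, a fixed fraction of $n$ uniformly over $R$. I would first record that $\prob[S_l=n]$ is only polynomially small (consistent with the vanishing of the exponential rate along the affine stretch of $I_h$): the single-jump construction gives $\prob[S_l=n]\ge\sum_{|j|\le b_l}p(m^*+j)\,\prob[S_{l-1}=l\mu-j]\gtrsim b_l\cdot p(m^*)\cdot b_l^{-1}\asymp\ell(n)/n^{\alpha+1}$, using $p(m^*+j)\sim p(m^*)$ for $|j|\le b_l=o(m^*)$ (regular variation) and the local CLT for $S_{l-1}$ near its mean. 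The upper bound on $M_n$ is then easy: if some increment exceeds $m^*+\epsilon n$ then the sum of the other $l-1$ increments is below $l\mu-\epsilon n\le\Ex[S_{l-1}]-\tfrac{\epsilon}{2}n$ for $l$ large, so a union bound and the Cramér estimate give $\prob[M_n>m^*+\epsilon n,\,S_l=n]\le l\,\ee^{-c(\epsilon)l}$, which is negligible against the polynomial lower bound on $\prob[S_l=n]$, uniformly in $l\in nR\cap\N$.

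\emph{The main obstacle} is the complementary lower bound on $M_n$ in part (ii): showing $\prob[S_l=n,\,M_n\le m^*-\epsilon n]=o(\prob[S_l=n])$, i.e.\ that \emph{failing} to create a macroscopic gap is polynomially costlier than creating one. A crude exponential-Chebyshev bound for the linear-in-$n$ excess $S_l-l\mu\approx m^*$ built from increments capped at a fixed fraction of $n$ does not decay, and is in any case too lossy to beat a polynomial denominator once $\var(T_1)=\infty$ (i.e.\ $\alpha\le2$); the genuine ``one big jump'' behaviour must be exploited. The plan is to peel off the largest increments one at a time: on $\{M_n\le m^*-\epsilon n\}$, accumulating the excess $m^*\asymp n$ forces at least two increments of order $n$, each contributing a factor of order $p(\Theta(n))\asymp n^{-\alpha-1}\ell(n)$ while the remainder, still near its mean, contributes only the local-CLT factor $b_l^{-1}$, so that summing over the boundedly many possibilities yields $\prob[S_l=n,\,M_n\le m^*-\epsilon n]\lesssim n^{2}(n^{-\alpha-1}\ell(n))^{2}b_l^{-1}=o(\ell(n)/n^{\alpha+1})$ for $\alpha\ge1$, the borderline case $\alpha=1$ demanding extra care because of the slowly varying corrections to $b_l$ and to the tail. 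Equivalently, one may appeal to the local single-big-jump principle for renewal sequences conditioned to an atypically large value, which directly gives that under $\prob[\,\cdot\mid S_l=n]$ exactly one increment equals $m^*+o(n)$ and the others are typical; in either route the uniformity over $l\in nR\cap\N$ is inherited from that of the local estimates, exactly as in part (i).
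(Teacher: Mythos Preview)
Your proof is essentially correct and follows the same overall strategy as the paper: both parts use the reduction to the law of $(T_1,\dots,T_l)$ given $S_l=n$, exponential tilting and a local CLT in part~(i), and the single-big-jump heuristic in part~(ii). Part~(i) matches the paper's argument closely.

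The one place where your route diverges is the lower bound on $M_n$ in part~(ii). You attempt to bound the \emph{bad} event $\{M_n\le m^*-\epsilon n,\,S_l=n\}$ directly via a ``two big jumps'' argument, and you rightly flag this as the main obstacle; making it rigorous (ruling out many medium jumps, and handling $\alpha=1$ with infinite variance) is delicate. The paper instead makes your ``equivalently'' sentence the primary argument: it invokes Doney's local large deviation theorem \cite{doney1989} to obtain the precise asymptotic $\prob[S_{l_n}=n]\sim l_n\,p(n-l_n/\rho_c)$, and then shows via a Bonferroni inclusion--exclusion lower bound that the \emph{good} event $\{M_n\ge z_n-\epsilon n,\,S_{l_n}=n\}$ already has probability $\gtrsim l_n\,p(z_n)$. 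The pair correction (two increments each $\ge z_n/2$) is $O\bigl(n\,p(z_n)^2\bigr)=o\bigl(p(z_n)\bigr)$, using only $\alpha\ge1$ and, when $\alpha=1$, that $\ell(z)\to0$ because $\Ex[T_1]<\infty$. This bypasses the no-big-jump event entirely and is considerably cleaner than your direct approach.

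One technical remark on part~(i): since the tilting parameter $f\circ\imath_\rho(l_n/n)$ varies with $n$, you need a local CLT for \emph{triangular arrays} (the paper combines the Lindeberg--Feller CLT with the Davis--McDonald local theorem). Your uniformity-via-compactness remark suggests you are aware of this, but it should be stated explicitly.
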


\medskip

Proposition \ref{prop:no_disorder} states that the regime of large
deviations corresponding to the affine stretch of $I_h$ involves a
\textit{big jump phenomenon}, i.e., $M_n$ is $O(n)$, as the most
likely mechanism to achieve a small number of contacts. On the
contrary, $M_n$ is \textit{almost microscopic}, i.e., $O(\log n)$, in
the regime of large deviations where $I_h$ is strictly convex. The big
jump phenomenon is a well-known feature of heavy-tailed random
variables, such as the inter-arrival times used here: large values of
sums of independent heavy-tailed random variables are attained by a
large value of just one of the terms in the sum
\cite{armendariz2011,cf:DDS,foss2011,cf:MN}.

 \medskip

\subsection{Main results: smoothing, contact number conditioning, and disorder effect}

One interesting question concerns the effect of disorder on the
conditioned model we consider and, in particular, on the big jump
phenomenon. The presence or the lack of an affine stretch in the graph
of $I_h$ is related to the event $\rho_c>0$, which in turn depends on
the occurrence of a phase transition, i.e., on $h_c>-\infty$, and on
the differentiability properties of the free energy $f$ at $h_c$: in
fact, by convexity $\rho_c:=\lim_{h\downarrow h_c}\rho(h)$ with
$\rho:=\partial_hf$ is the right derivative of $f$ at $h_c$, whereas
the left derivative is 0.  For this reason, we propose first a
smoothing inequality under Assumptions \ref{assump:p} and
\ref{assump:omega}, generalizing previous works (see
\cite{giacomin2006_2,giacomin2006_3,caravenna2013}).

\medskip

\begin{theorem}
  \label{th:smoothing}
  Suppose that $\int_\Omega\omega_0^2\,\probd[\dd\omega]>0$.  If
  $h_c>-\infty$, then there exists a constant $c>0$ such that for all
  sufficiently small $\delta>0$
  \begin{equation*}
   f(h_c+\delta)\le c\,\delta^2\,. 
  \end{equation*}
\end{theorem}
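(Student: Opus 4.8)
The plan is to run the rare‑stretch (coarse‑graining) strategy behind the smoothing inequalities of \cite{giacomin2006_2,caravenna2013}, taking care that it survives under the sole hypothesis of finite exponential moments of the charges. Set $\sigma^2:=\int_\Omega\omega_0^2\,\probd[\dd\omega]>0$ and fix a small $\delta>0$; the goal is to bound $f(h_c+\delta)$, which is positive since $h_c+\delta>h_c$. The lever is the identity $f(h_c-\delta)=0$, valid because $h_c-\delta<h_c$ lies in the delocalized phase: we will produce a lower bound for $f(h_c-\delta)=\lim_n\Exd[\tfrac1n\log Z_{n,h_c-\delta}]$ which, confronted with this vanishing, forces $f(h_c+\delta)=O(\delta^2)$. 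Concretely, $Z_{n,h_c-\delta}(\omega)$ is bounded below by restricting the renewal to visit a sparse, pairwise well‑separated family of ``good windows'' of length $\ell$, on each of which the system at parameter $h_c-\delta$ looks like a localized system at the shifted parameter $h_c+\delta$.

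A window $G$ of length $\ell$ is called \emph{good} if the partition function it carries at parameter $h_c-\delta$ with the true charges is at least $\ee^{\ell(f(h_c+\delta)-o_\ell(1))}$; since $h_c-\delta+\omega_b=h_c+\delta+(\omega_b-2\delta)$, this is the same as the partition function at parameter $h_c+\delta$ with charges $\omega_b-2\delta$, $b\in G$. By the law of large numbers for the free energy in the localized phase (Theorem~\ref{th:Cinfty}, applied at $h_c+\delta>h_c$), a window is good as soon as its charges look like a typical sample of the disorder shifted up by $2\delta$, and by Cram\'er's theorem this happens with probability $p_\ell\ge \ee^{-\ell(\Lambda^*(2\delta)+o_\ell(1))}>0$, where $\Lambda^*$ is the Legendre transform of $\Lambda(\lambda):=\log\Exd[\ee^{\lambda\omega_0}]$. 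Here Assumption~\ref{assump:omega} is essential: it makes $\Lambda$ finite and smooth near $0$ with $\Lambda(0)=\Lambda'(0)=0$ and $\Lambda''(0)=\sigma^2>0$, whence $\Lambda^*(2\delta)=\tfrac{2\delta^2}{\sigma^2}(1+o(1))$ as $\delta\downarrow0$. On the other side of the ledger, consecutive good windows are typically $\ell/p_\ell$ apart, and the renewal crosses each such gap with a single inter‑arrival, paying (by Assumption~\ref{assump:p}) a factor $p(\ell/p_\ell)$ of order $(\ell/p_\ell)^{-(1+\alpha)}$ up to slow variation, i.e.\ a log‑cost $\approx(1+\alpha)\ell\,\Lambda^*(2\delta)$ per good window, modulo $O(\log\ell)$.

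A Borel--Cantelli and ergodicity argument ensures that, for $\probd$‑a.e.\ $\omega$, each interval $[1,n]$ contains $\sim (n/\ell)p_\ell$ good windows with the stated asymptotic spacing. Feeding this decomposition into the definition of $f$, concatenating the single‑jump bridges with the good‑window partition functions gives, for each fixed $\ell$,
\begin{equation*}
0\;=\;f(h_c-\delta)\;\ge\;p_\ell\Big[f(h_c+\delta)-(1+\alpha)\,\Lambda^*(2\delta)-o_\ell(1)\Big].
\end{equation*}
Since $p_\ell>0$ the bracket is $\le0$; letting $\ell\uparrow\infty$ to absorb the $o_\ell(1)$ yields $f(h_c+\delta)\le(1+\alpha)\Lambda^*(2\delta)\le c\,\delta^2$ for all small $\delta$, with $c$ depending only on $\alpha$ and $\sigma^2$. (Equivalently, by Legendre duality between $f(h_c+\cdot)-f(h_c)$ and $I_{h_c}$, the conclusion reads $I_{h_c}(r)\ge r^2/(4c)$ near $r=0$, i.e.\ $f$ leaves $h_c$ at most quadratically; in particular $\rho_c=0$, so the big‑jump regime — the affine stretch of $I_h$ — disappears under disorder.)

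The genuinely delicate step, and the one requiring the generality of Assumption~\ref{assump:omega}, is the good‑window probability estimate. Conditioning a window of i.i.d.\ charges to have empirical mean near $2\delta$ costs $\ee^{-\ell\Lambda^*(2\delta)}$ by Cram\'er's theorem, but one must also verify that under this conditioning — equivalently, under the exponential tilt $\probd_{\lambda_\delta}$ determined by $\Lambda'(\lambda_\delta)=2\delta$, which exists for $\delta$ small — the recentered window really does carry a partition function $\ge\ee^{\ell(f(h_c+\delta)-o_\ell(1))}$. This is a quantitative continuity statement for the quenched pinning free energy as a functional of the product law of the charges; it can be established because the tilted laws $\probd_\lambda$ with $|\lambda|$ small have uniformly bounded exponential moments (again Assumption~\ref{assump:omega}), and it is then combined with Theorem~\ref{th:Cinfty}. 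The remaining ingredients — the concatenation lower bound for $Z_{n,h_c-\delta}$ and the a.s.\ abundance and spacing of good windows — are routine.
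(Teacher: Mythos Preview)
Your overall plan—rare-stretch lower bound for $f(h_c-\delta)$ via sparse good windows, with the good-window probability estimated by an exponential tilt—is the architecture of \cite{giacomin2006_2,caravenna2013}, and the paper follows it too (quoting \cite[Theorem~1.5]{caravenna2013} for the tilted bound $f_s(h_c)\le(1+\alpha)s^2$). The gap is precisely at the step you call delicate, and your proposed justification is not adequate. What you need is that under the tilt $\probd_{\lambda_\delta}$ the window partition function at $h_c-\delta$ is typically at least $\ee^{\ell(f(h_c+\delta)-o_\ell(1))}$, i.e.\ that $f_{\lambda_\delta}(h_c-\delta)\ge f(h_c+\delta)$. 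This is a comparison of free energies under two \emph{different} centered disorder laws (the original one and the recentered tilt), and it is \emph{not} a continuity statement in any usable form: the two laws do converge as $\delta\downarrow 0$, but since $f(h_c+\delta)$ is the very quantity you want to show is $O(\delta^2)$, a discrepancy of order $o_\delta(1)$ swamps the conclusion. Theorem~\ref{th:Cinfty} concerns regularity of $f$ in $h$, not in the disorder law, and uniform exponential moments of the tilted family do not by themselves deliver the inequality. This tilt-versus-shift comparison is exactly where the hypothesis $\eta>2$ enters in \cite{caravenna2013}; removing it is the new technical content of the theorem.

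The paper does not attempt your specific inequality (which amounts to $f_s(h)\ge f(h+m_s)$ with $m_s=\Lambda'(s)$, the ``full'' shift). It proves instead the weaker $f_s(h)\ge f(h+\zeta s)$ for an explicit $\zeta>0$ strictly smaller than $\sigma^2$, via an interpolation: one shows that $\sigma\mapsto\int\log Z_{n,h+\zeta(s-\sigma)}(\omega)\,\probd_{n,\sigma}[\dd\omega]$ is nondecreasing by a single-site perturbation estimate that uses convexity of $\log Z$ in each $\omega_a$ together with the binary identity $\ee^{cX_a}=1+(\ee^c-1)X_a$. Combining this with the \cite{caravenna2013} bound at $h=h_c$ (rather than $h_c-\delta$) gives $f(h_c+\zeta s)\le f_s(h_c)\le(1+\alpha)s^2$, hence $f(h_c+\delta)\le(1+\alpha)(\delta/\zeta)^2$; the suboptimal $\zeta$ only affects the constant $c$.
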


\medskip

We recall that the smoothing bound in \cite[Theorems~1.5 and
  1.8]{caravenna2013} works for charges of the form $\beta\omega_a$
with $\int_\Omega\omega_0^2\,\probd[\dd\omega]=1$ and $\gb \in (0,
\xi/2)$, $\beta$ being a free parameter that controls the amplitude of
the charges and $\xi>0$ being a number such that $\int_\Omega
\ee^{z\omega_0}\probd[\dd\omega]<+\infty$ for all $z\in(-\xi,\xi)$. In
our setting, where there is no $\beta$ but the variance of the charges
can be any, the hypothesis $\beta<\xi/2$ becomes $\eta>2$ with $\eta$
as in Assumption \ref{assump:omega}. Therefore
Theorem~\ref{th:smoothing} is already proved in \cite{caravenna2013}
if $\eta>2$, but here we extend the smoothing bound to the case
$\eta\le 2$ and, in particular, to $1/(1+\ga)<\eta \le 2$ for which we
have $h_c>-\infty$ as recalled in Section~\ref{sec:fe-density}.

\smallskip

We are now in the position to state that, in the presence of disorder,
$\rho_c=0$ irrespective of whether $\Ex[T_1]$ is infinite or not. In
fact, either disorder washes out the phase transition entailing
$h_c=-\infty$ or the phase transition is still present but it is
necessarily continuous as the right derivative of $f$ at $h_c$ is 0
according to Theorem \ref{th:smoothing}.  Consequently, the affine
stretch in the graph of $I_h$ is washed out by disorder and the two
different regimes of large deviations of the pure model merge into a
single one.  The following corollary of formula
\eqref{eq:I_h_esplicita} and Theorem \ref{th:smoothing} collects what
we know about the rate function $I_h$.


\medskip

\begin{corollary}
  \label{th:strict_convexity}
   Suppose that $\int_\Omega\omega_0^2\,\probd[\dd\omega]>0$. For
   every $h\in\Rl$ the rate function $I_h$ is strictly convex,
   infinitely differentiable, and of class Gevrey-3 on $(0,1)$.
\end{corollary}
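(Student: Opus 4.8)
The plan is to reduce the statement to the properties of $I_h$ that were already established on the interval $(\rho_c,1)$ in the discussion preceding the corollary, by showing that the standing hypothesis $\int_\Omega\omega_0^2\,\probd[\dd\omega]>0$ forces $\rho_c=0$, so that $(\rho_c,1)$ is the whole of $(0,1)$ and the affine stretch in formula \eqref{eq:I_h_esplicita} disappears. Thus the only thing to prove is $\rho_c=0$; everything else is bookkeeping.

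To establish $\rho_c=0$ I would distinguish two cases. If $h_c=-\infty$, then, as already observed in Section~\ref{sec:fe-density}, the bound $f(0)\ge\max\{0,h+\log p(1)\}-h\rho(h)$ valid for $h>h_c$ already gives $\limsup_{h\downarrow h_c}\rho(h)\le 0$, hence $\rho_c=0$, and the variance hypothesis is not even needed. If $h_c>-\infty$, I would use that, by convexity of $f$, $\rho_c=\lim_{h\downarrow h_c}\rho(h)=\lim_{h\downarrow h_c}\partial_h f(h)$ coincides with the right derivative of $f$ at $h_c$ (the standard fact that the derivative of a convex function converges to the one-sided derivative at an endpoint of an interval of differentiability). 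Since $f(h_c)=0$ and, by Theorem~\ref{th:smoothing}, there is a constant $c>0$ with $f(h_c+\delta)\le c\,\delta^2$ for all sufficiently small $\delta>0$, monotonicity of $f$ yields
\[
0\le\rho_c=\lim_{\delta\downarrow 0}\frac{f(h_c+\delta)-f(h_c)}{\delta}\le\lim_{\delta\downarrow 0}c\,\delta=0\,,
\]
so $\rho_c=0$ in this case as well.

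With $\rho_c=0$ in hand, formula \eqref{eq:I_h_esplicita} shows that on the whole of $(0,1)$ the rate function is given by the single expression $I_h(r)=r[\imath_\rho(r)-h]-f\circ\imath_\rho(r)+f(h)$, and the three claimed properties follow exactly as in the text before the corollary: $I_h$ is infinitely differentiable on $(0,1)$ because $\imath_\rho$ is, by the inverse function theorem together with the infinite differentiability of $f$ on $(h_c,+\infty)$ from Theorem~\ref{th:Cinfty}; it is strictly convex on $(0,1)=(\rho_c,1)$ because there $\partial_r^2 I_h(r)=1/\partial_h^2 f(\imath_\rho(r))>0$, using the strict convexity of $f$ from Theorem~\ref{th:Cinfty}; and it is of class Gevrey-3 on $(0,1)$ since $f$ is Gevrey-3 on $(h_c,+\infty)$ by Theorem~\ref{th:Cinfty} and Gevrey classes are stable under inversion, composition, and passage to the reciprocal. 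Since the corollary merely assembles Theorem~\ref{th:smoothing} with results already proved, there is no real obstacle; the one point deserving a line of care is the passage from the quadratic smoothing bound to the vanishing of the one-sided derivative, i.e., the identification of $\rho_c$ with the right derivative of $f$ at $h_c$, which is precisely the link between Theorem~\ref{th:smoothing} and the structure of $I_h$.
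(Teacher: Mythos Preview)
Your proposal is correct and follows exactly the paper's own reasoning: the corollary is stated there as an immediate consequence of formula~\eqref{eq:I_h_esplicita} and Theorem~\ref{th:smoothing}, with the paragraph preceding it spelling out precisely the case split $h_c=-\infty$ versus $h_c>-\infty$ to conclude $\rho_c=0$, after which the already-established properties of $I_h$ on $(\rho_c,1)$ apply on all of $(0,1)$.
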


\medskip

Our main finding is that disorder prevents any form of big jump
phenomena, as stated by the following theorem.

 \medskip

\begin{theorem}
  \label{th:main}
  Suppose that $\int_\Omega\omega_0^2\,\probd[\dd\omega]>0$. The
  following property holds for $\pae$: for every $h\in\Rl$ and closed
  set $R\subset(0,1)$ there exists a constant $c>0$ (independent of
  $\omega$) such that
  \begin{equation*}
    \adjustlimits\limsmallspace_{n\uparrow\infty} \sup_{l\in nR\cap \N}\, \prob_{n,h,\omega}\big[M_n>c\log n\big| L_n = l\big]=0\,.
  \end{equation*}
\end{theorem}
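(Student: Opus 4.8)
The plan is to turn the sharp conditioning into a genuine pinning measure in the localized phase and to combine two quantitative inputs: a quenched Local Central Limit Theorem for the contact number, and the uniform quantitative localization of disordered pinning in the localized phase. Since $\int_\Omega\omega_0^2\,\probd[\dd\omega]>0$ forces $\rho_c=0$ by Theorem~\ref{th:smoothing} (cf.\ Corollary~\ref{th:strict_convexity}), the inverse $\imath_\rho$ of the contact density is defined on the whole of $(0,1)$. Fix a closed $R\subset(0,1)$; enlarge it to a closed interval $R'\subset(0,1)$ having $R$ in its interior, and set $H:=\imath_\rho(R')$, a compact subset of $(h_c,+\infty)$, on which $f_-:=\min_{h\in H}f(h)>0$ and $0<\min_{h\in H}v(h)\le\max_{h\in H}v(h)<+\infty$. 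Since $h\mapsto\Ex_{n,h,\omega}[L_n]=\partial_h\log Z_{n,h}(\omega)$ is continuous and strictly increasing with range $(1,n)$, for every large $n$ and every $l\in nR\cap\N$ there is a unique $h_{n,l}=h_{n,l}(\omega)$ with $\Ex_{n,h_{n,l},\omega}[L_n]=l$; the uniform convergence $\tfrac1n\Ex_{n,h,\omega}[L_n]\to\rho(h)$ on $H$ from Theorem~\ref{th:Cinfty}, together with the fact that $\rho$ maps the interior of $H$ onto a neighbourhood of $R$, forces $h_{n,l}\in H$ for all $n$ large, uniformly in $l\in nR\cap\N$.

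Because the conditioned law does not depend on $h$, for every $h\in\Rl$ and every $l\in nR\cap\N$ we may write
\begin{equation*}
\prob_{n,h,\omega}\big[M_n>c\log n\,\big|\,L_n=l\big]=\prob_{n,h_{n,l},\omega}\big[M_n>c\log n\,\big|\,L_n=l\big]\le\frac{\prob_{n,h_{n,l},\omega}[M_n>c\log n]}{\prob_{n,h_{n,l},\omega}[L_n=l]}\,.
\end{equation*}
For the denominator we invoke the quenched Local Central Limit Theorem for $L_n$ under $\prob_{n,h,\omega}$, which holds uniformly for $h$ in compact subsets of $(h_c,+\infty)$: evaluated at the mean (recall $\Ex_{n,h_{n,l},\omega}[L_n]=l$ exactly) it gives, for $\pae$ and all $n$ large, $\prob_{n,h_{n,l},\omega}[L_n=l]\ge c_1/\sqrt n$ with $c_1>0$ depending only on $\max_{h\in H}v(h)$, uniformly in $l\in nR\cap\N$. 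It therefore remains to prove that, for $c$ large enough (independent of $\omega$), $\sqrt n\,\prob_{n,h,\omega}[M_n>c\log n]\to0$ uniformly in $h\in H$, for $\pae$.

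To bound $\prob_{n,h,\omega}[M_n>t]$ we sum over the two endpoints of the would-be large gap. Let $Z_{[u,w],h}(\omega)$ denote the partition function of the model restricted to the block $[u,w]$ with a contact imposed at $w$ and charges counted on $(u,w]$, so that $Z_{[0,m],h}(\omega)=Z_{m,h}(\omega)$, $Z_{[a,a+s],h}(\omega)=Z_{s,h}(\theta^a\omega)$ with $\theta$ the shift on $\Omega$, and $Z_{[0,n],h}(\omega)\ge Z_{[0,a],h}(\omega)\,Z_{[a,a+s],h}(\omega)\,Z_{[a+s,n],h}(\omega)$ by the renewal property. Hence, for $0\le a$ and $a+s\le n$,
\begin{equation*}
\prob_{n,h,\omega}\big[a\text{ and }a+s\text{ are consecutive contacts}\big]=\frac{Z_{[0,a],h}(\omega)\,\ee^{h+\omega_{a+s}}p(s)\,Z_{[a+s,n],h}(\omega)}{Z_{[0,n],h}(\omega)}\le\frac{\ee^{h+\omega_{a+s}}p(s)}{Z_{[a,a+s],h}(\omega)}\,,
\end{equation*}
and a union bound over $a$ and over $s>t$ with $a+s\le n$ controls $\prob_{n,h,\omega}[M_n>t]$. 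The second quantitative input is the uniform lower bound $Z_{[a,a+s],h}(\omega)\ge\ee^{sf_-/2}$, valid for $\pae$ for all $a\ge0$, all $s\ge s_0(\omega)$, and all $h\in H$; it is obtained by combining the uniform convergence $\tfrac1s\log Z_{s,h}\to f(h)$ on $H$ (Theorem~\ref{th:Cinfty}) with exponential concentration of $\log Z_{s,h}$ about its mean, using that $\tfrac1s\log Z_{s,h}(\omega)$ is $1$-Lipschitz in $h$ (its $h$-derivative equals $L_s/s\in[0,1]$) to reduce $H$ to a finite grid, and then a union bound over $0\le a\le n$ and $c\log n\le s\le n$ followed by Borel--Cantelli, using stationarity of the charges. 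Inserting this, summing over $a\in\{0,\dots,n-s\}$ and over $s>c\log n$, using $\sum_{s\ge1}p(s)=1$ together with $\max_{0\le m\le2n}|\omega_m|\le(3/\eta)\log n$ eventually $\probd$-almost surely (which follows from $\int_\Omega\ee^{\eta|\omega_0|}\,\probd[\dd\omega]<+\infty$ via Markov's inequality and Borel--Cantelli), one gets $\prob_{n,h,\omega}[M_n>c\log n]\le C\,n^{K-cf_-/2}$ for $n$ large and for $\pae$, with $C,K>0$ not depending on $n$, $l$, or $h\in H$. Taking $c$ with $cf_-/2>K+1$ then yields $\sqrt n\,\prob_{n,h_{n,l},\omega}[M_n>c\log n]\to0$ uniformly in $l\in nR\cap\N$, which is the claim.

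The step I expect to be the main obstacle is the position- and parameter-uniform quenched exponential lower bound on the restricted partition functions $Z_{[a,a+s],h}(\omega)$. An exponential-in-$s$ bound is genuinely needed, not a luxury: since $p$ is only polynomially decaying, one cannot make the probability of a gap of length $s$ decay exponentially by distributing it over several short jumps, so the mechanism must be the strict positivity of the free energy throughout the localized phase. This is exactly the quantitative control of the localized phase developed in \cite{cf:companionpaper}; the quenched Local Central Limit Theorem, used for the lower bound on $\prob_{n,h_{n,l},\omega}[L_n=l]$, is the second non-trivial ingredient and is among the refined estimates established in the present paper.
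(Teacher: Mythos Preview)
Your overall strategy matches the paper's: reparametrize to $h_{n,l}(\omega)$ solving $\Ex_{n,h_{n,l},\omega}[L_n]=l$, use the quenched LCLT (Theorem~\ref{th:LCLT}) to lower-bound the denominator by $c_1/\sqrt{n}$, and then show $\sqrt{n}\,\prob_{n,h,\omega}[M_n>c\log n]\to0$ uniformly for $h$ in the compact set $H$. The difference is in the numerator. The paper packages this step as Lemma~\ref{lem:max_uncond}, proved via the \emph{alternative free energy} $\mu(h)$ of~\eqref{eq:mu-def}: one writes the gap probability as a sum of $\prob_{j,h,\vartheta^i\omega}[T_1=j]$, forms $\Lambda_s=\sum_j\ee^{\mu(h_s)j/2}\prob_{j,h_s,\cdot}[T_1=j]$ (finite expectation because $\mu(h_s)>0$), and uses Birkhoff's ergodic theorem to control $\tfrac1n\sum_{i<n}\Lambda_s(\vartheta^i\omega)$. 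You instead lower-bound the shifted partition functions $Z_{s,h}(\vartheta^a\omega)\ge\ee^{sf_-/2}$ directly, via the concentration inequality of Theorem~\ref{th:concentration} and Borel--Cantelli. Both routes rest on tools from~\cite{cf:companionpaper} and deliver the same conclusion; yours avoids introducing $\mu(h)$ at the cost of a heavier almost-sure union-bound argument, while the paper's ergodic route gives a reusable lemma valid for any power of $n$.

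One point to tighten: the intermediate claim that $Z_{[a,a+s],h}(\omega)\ge\ee^{sf_-/2}$ holds ``for all $a\ge0$ and all $s\ge s_0(\omega)$'' is not literally true---by stationarity and ergodicity, for any fixed $s$ with $\probd[Z_{s,h}<\ee^{sf_-/2}]>0$ there are infinitely many bad shifts $a$. What your Borel--Cantelli actually yields, and what suffices, is the $n$-indexed statement: for $\pae$ and all large $n$, the bound holds for every $a\in\{0,\dots,n\}$ and every $s\in[c\log n,n]$. Your parenthetical explanation says exactly this, so the argument is sound once the intermediate claim is stated in that form; note also that this forces you to fix $c$ large enough \emph{before} running Borel--Cantelli (so that $\sum_n n\cdot n^{-\kappa'c}<\infty$), and then possibly enlarge it further to meet $cf_-/2>K+1$.
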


\medskip

Theorem~\ref{th:main} substantially strengthens the results in
\cite{giacomin2020} where in particular it is shown (assuming bounded
disorder) that for every closed set $R\subset(0,1)$ and $\epsilon>0$
and for $\pae$
\begin{equation}
\label{eq:fromGH}
    \adjustlimits\limsmallspace_{n\uparrow\infty} \sup_{l\in nR\cap \N }\,\prob_{n,h,\omega}\big[M_n> \epsilon n\big|L_n=l\big]=0 \,.
  \end{equation}
In words, \eqref{eq:fromGH} just states that there is no macroscopic
big jump in the presence of disorder, i.e., the largest gap between
contacts is not macroscopic. Theorem~\ref{th:main} instead says that
the largest gap between contacts is almost microscopic and it is of
the same order of the largest gap between contacts in the localized
phase of the (pure or disordered) pinning model (see
\cite[Theorem~2.5]{giacomin2006_1} and
\cite[Proposition~1.9]{cf:companionpaper}). Other results analogous to
\eqref{eq:fromGH} are proved in \cite{giacomin2020}, notably in the
directions of generalized pinning models proposed in physics. Also
these results can be strengthened to obtain that the largest gap
between contacts is almost microscopic and
Section~\ref{sec:generalizations} is devoted to this.

\smallskip

We can refine Theorem~\ref{th:main} in the direction of saying that,
not only there is no trace of the big jump, but also that the contact
density is constant through the system even down to scale lengths that
diverge faster than $\log n$. This is the content of the next theorem.

\medskip

\begin{theorem}
\label{th:smallscales} 
  The following property holds for $\pae$: for every $h\in\Rl$ and
  $\epsilon>0$, closed set $R\subset(0,1)$, and sequence
  $\{\zeta_n\}_{n\in\N}$ of positive numbers such that
  $\lim_{n\uparrow\infty}\zeta_n=+\infty$
  \begin{equation*}
    \adjustlimits\limsmallspace_{n\uparrow\infty}\sup_{l\in nR\cap\N}\,
    \prob_{n,h,\omega}\Bigg[\max_{\substack{0\le i<j\le n\\j-i\ge \zeta_n\log n}}\bigg|\frac{1}{j-i}\sum_{a=i+1}^jX_a-\frac{l}{n}\bigg|>\epsilon\Bigg|L_n=l\Bigg]=0\,.
\end{equation*}
  \end{theorem}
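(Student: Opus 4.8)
The plan is to compare the conditioned measure with an \emph{unconditioned} pinning model deep in the localized phase, where windows of contacts concentrate with an exponential rate, and then to pay for removing the conditioning with a polynomial factor supplied by the quenched Local Central Limit Theorem established in this paper. I assume throughout that $\rho_c=0$ (which holds under genuine disorder $\int_\Omega\omega_0^2\,\probd[\dd\omega]>0$ by Corollary~\ref{th:strict_convexity}; the case $\rho_c>0$ is excluded, as Proposition~\ref{prop:no_disorder}$(ii)$ shows the conclusion may fail there), so that $\imath_\rho$ is an increasing bijection of $(0,1)$ onto $(h_c,+\infty)$. Fix $h\in\Rl$, $\epsilon>0$, a closed $R\subset(0,1)$ and $\zeta_n\to+\infty$. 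Since $\prob_{n,h,\omega}[\,\cdot\,|L_n=l]$ does not depend on $h$, for each $n$, $\omega$ and $l\in nR\cap\N$ we may replace $h$ by the $\omega$-dependent value $h^\star$ determined by $\Ex_{n,h^\star,\omega}[L_n]=l$; such a value exists and is unique because $h\mapsto\Ex_{n,h,\omega}[L_n]=\partial_h\log Z_{n,h}(\omega)$ is continuous, strictly increasing (its derivative is $\Var_{n,h,\omega}[L_n]>0$) and sweeps $(1,n)$. Monotonicity, together with the uniform convergence $\tfrac1n\Ex_{n,h,\omega}[L_n]\to\rho(h)$ on compacts (Theorem~\ref{th:Cinfty} with $r=1$) and $\lim_{h\downarrow h_c}\rho(h)=0$, forces $h^\star$ into a fixed compact $H\subset(h_c,+\infty)$ for all large $n$, uniformly in $l\in nR\cap\N$ and for $\pae$, with $|\rho(h^\star)-l/n|\to0$ uniformly. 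For any event $A$ one then has the basic bound
\[
\prob_{n,h,\omega}[A\mid L_n=l]=\prob_{n,h^\star,\omega}[A\mid L_n=l]\le\frac{\prob_{n,h^\star,\omega}[A]}{\prob_{n,h^\star,\omega}[L_n=l]}\,.
\]

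For the denominator I would invoke the quenched Local Central Limit Theorem for the contact number: for $\pae$ there are $c_1>0$ and $n_0\in\N$ with $\prob_{n,h',\omega}[L_n=l']\ge c_1/\sqrt n$ for every $n\ge n_0$, every $h'\in H$, and every $l'$ within $\sqrt n$ of $\Ex_{n,h',\omega}[L_n]$; here one uses in addition $\tfrac1n\Var_{n,h,\omega}[L_n]\to v(h)$ uniformly on $H$ (Theorem~\ref{th:Cinfty} with $r=2$) and $\inf_H v>0$ (from~\eqref{eq:lim_vh} and strict convexity of $f$), so that the Gaussian normalisation stays of order $1/\sqrt n$ throughout $H$. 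Taking $h'=h^\star$ and $l'=l$, which is admissible since $\Ex_{n,h^\star,\omega}[L_n]=l$, yields $\prob_{n,h^\star,\omega}[L_n=l]\ge c_1/\sqrt n$.

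The heart of the proof is the numerator, applied with $A=\{\exists\,0\le i<j\le n:\ j-i\ge\zeta_n\log n,\ |\tfrac1{j-i}\sum_{a=i+1}^jX_a-l/n|>\epsilon\}$. Using $|\rho(h^\star)-l/n|<\epsilon/2$ for $n$ large and a union bound over the $O(n^2)$ windows, matters reduce to the following single-window estimate in the \emph{localized} model: for $\pae$ there are $c_2,C_2>0$ such that, for all large $n$, all $h'\in H$ and all $0\le i<j\le n$,
\[
\prob_{n,h',\omega}\!\left[\,\Big|\tfrac1{j-i}\textstyle\sum_{a=i+1}^jX_a-\rho(h')\Big|>\tfrac{\epsilon}{2}\,\right]\le C_2\,\ee^{-c_2(j-i)}\,.
\]
I would obtain this from the localized-phase analysis of \cite{cf:companionpaper}: uniformly in $n$ and in the window position, the contact set is squeezed between renewal processes whose inter-arrival laws have exponential moments and mean close to $1/\rho(h')$, which controls both the lower tail of the empirical density (too few contacts, via a Cramér bound on the inter-arrival sums) and its upper tail (overly long runs of consecutive contacts, which are exponentially costly since $\rho(h')<1$ on $H$); a Cramér estimate for the renewal counting function then gives the display. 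Equivalently one may argue directly: $\Ex_{n,h',\omega}[\ee^{\lambda(L_j-L_i)}]$ equals the ratio $Z_{n,h',\omega'}/Z_{n,h',\omega}$ with $\omega'$ obtained by adding $\lambda$ to the charges on $(i,j]$, one bounds this ratio by $\ee^{(j-i)(f(h'+\lambda)-f(h'))+o(j-i)}$ uniformly in the window (using the uniform free-energy estimates of Theorem~\ref{th:Cinfty} and the gap estimates of \cite{cf:companionpaper}), and one optimises over small $\lambda$.

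Combining the three ingredients,
\[
\prob_{n,h,\omega}[A\mid L_n=l]\le\frac{\sqrt n}{c_1}\sum_{m\ge\zeta_n\log n}(n+1-m)\,C_2\,\ee^{-c_2m}\le C_3\,n^{3/2}\,\ee^{-c_2\zeta_n\log n}=C_3\,n^{3/2-c_2\zeta_n}\xrightarrow[n\to\infty]{}0\,,
\]
uniformly in $l\in nR\cap\N$, which is the assertion. I expect the main obstacle to be the single-window localized concentration holding \emph{simultaneously} over all $\Theta(n^2)$ windows, over $h'$ in a compact subset of the localized phase, and for typical charges — equivalently, the uniform control of $Z_{n,h',\omega'}/Z_{n,h',\omega}$ when $\omega'$ and $\omega$ differ on an arbitrary sub-block. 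The quenched Local Central Limit Theorem, though itself substantial, is an input established separately in the paper; the hypothesis $\zeta_n\to+\infty$ is exactly what lets the union bound absorb both the $n^2$ window count and the $\sqrt n$ loss from removing the conditioning, so that only a polynomial lower bound on $\prob_{n,h^\star,\omega}[L_n=l]$ is actually needed.
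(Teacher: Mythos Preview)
Your overall architecture matches the paper's exactly: you replace $h$ by the $\omega$-dependent $h^\star=h_{n,l,\omega}$ centering the contact number at $l$, use the quenched local CLT (Theorem~\ref{th:LCLT}) to lower-bound the denominator $\prob_{n,h^\star,\omega}[L_n=l]$ by $c/\sqrt{n}$, and seek a polynomially strong upper bound on the unconditioned numerator. The paper does precisely this; it states that Theorem~\ref{th:smallscales} follows from an unconditional result (Theorem~\ref{th:small_scales}) in exactly the same way that Theorem~\ref{th:main} follows from Lemma~\ref{lem:max_uncond}.

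The gap is in your single-window estimate. Your route~(b) requires that $\log Z_{n,h',\omega'}-\log Z_{n,h',\omega}\le (j-i)\big(f(h'+\lambda)-f(h')\big)+o(j-i)$ \emph{uniformly over all $O(n^2)$ windows $(i,j]$ for $\probd$-typical $\omega$}. Theorem~\ref{th:Cinfty} does not furnish this: it controls only the full system anchored at the origin and says nothing about the free energy of the $\vartheta^i\omega$-subsystems uniformly in $i\in\{0,\ldots,n\}$. Route~(a) is too vague to fill this in. The paper resolves this step by a different mechanism. First, via Lemma~\ref{lem:max_uncond}, one restricts to $\{M_n\le c\log n\}$ and locates contacts $i',j'$ within $c\log n$ of the window endpoints, so that conditional independence \eqref{eq:ci-fact1}--\eqref{eq:ci-fact2} reduces the window estimate to a genuine subsystem probability $\prob_{j'-i',h,\vartheta^{i'}\omega}\big[|L_{j'-i'}-\rho(h)(j'-i')|>\epsilon(j'-i')/2\big]$. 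Second, one proves an \emph{annealed} exponential bound for this quantity (Lemma~\ref{lemma:Ex_large_deviation}): a Chernoff argument combined with the concentration inequality Theorem~\ref{th:concentration} for $\log Z$ gives $\sup_{h\in H}\prob_{m,h,\cdot}[|L_m-\rho(h)m|>\epsilon m]\le\Gamma_m$ with $\Exd[\Gamma_m]\le G\ee^{-\gamma m}$. Third, one passes from annealed to quenched-uniformly-in-position by applying Birkhoff's ergodic theorem to $\Lambda_s:=\sum_j \ee^{\gamma_s j/2}\Gamma_{s,j}$, so that $\frac{1}{n}\sum_{i=0}^{n-1}\Lambda_s(\vartheta^i\omega)$ stays bounded for $\pae$. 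This concentration-plus-ergodicity device is exactly what your sketch is missing; without it, the uniform control you correctly flag as the main obstacle remains unestablished.
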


\medskip

Proving results like those in Theorem~\ref{th:main} and in
Theorem~\ref{th:smallscales} on the conditional measure requires a
good control of the probability mass function of $L_n$. In fact, a
crucial ingredient is a quenched local Central Limit Theorem (CLT) for
the contact number in the localized phase. This local result is
interesting in itself and we state it here among the main results.

\medskip

\begin{theorem}
  \label{th:LCLT}
  The following property holds for $\pae$: for every $H\subset (h_c,+\infty)$ compact
  \begin{equation*}
  \adjustlimits\limsmallspace_{n\uparrow\infty}
  \sup_{h\in H}\,\sup_{l\in\N_0}\Bigg|\sqrt{2\pi v(h) n}\,\prob_{n,h,\omega}[L_n=l]-\exp \bigg\{\!-\frac{(l-\Ex_{n,h,\omega}[L_n])^2}{2 v(h) n}\bigg\}\Bigg|=0
  \end{equation*}
  with $v(h):=\partial_h^2f(h)>0$.
\end{theorem}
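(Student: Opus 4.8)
The plan is to prove the local limit theorem by Fourier inversion. Since $L_n=\sum_{a=1}^nX_a$ is integer-valued and $z\mapsto Z_{n,h+z}(\omega)$ is entire, the characteristic function of $L_n$ under $\prob_{n,h,\omega}$ is
\[
\phi_n(\theta):=\Ex_{n,h,\omega}\big[\ee^{i\theta L_n}\big]=\frac{Z_{n,h+i\theta}(\omega)}{Z_{n,h}(\omega)},
\]
and Fourier inversion gives $\prob_{n,h,\omega}[L_n=l]=\frac1{2\pi}\int_{-\pi}^{\pi}\ee^{-i\theta l}\phi_n(\theta)\,\dd\theta$ for every $l\in\N_0$. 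Set $\mu_n=\mu_n(h,\omega):=\Ex_{n,h,\omega}[L_n]=\partial_h\log Z_{n,h}(\omega)$, so that $\phi_n(\theta)\ee^{-i\theta\mu_n}=\Ex_{n,h,\omega}[\ee^{i\theta(L_n-\mu_n)}]$, and put
\[
\Psi_n(h,\omega):=\int_{-\pi\sqrt n}^{\pi\sqrt n}\Big|\Ex_{n,h,\omega}\big[\ee^{it(L_n-\mu_n)/\sqrt n}\big]-\ee^{-v(h)t^2/2}\Big|\,\dd t .
\]
Recognising $\ee^{i\theta\mu_n-v(h)n\theta^2/2}$ as the Fourier transform of the Gaussian density with mean $\mu_n$ and variance $v(h)n$, subtracting its inversion formula from that for $\prob_{n,h,\omega}[L_n=l]$, estimating the Gaussian mass outside $[-\pi,\pi]$, and substituting $\theta=t/\sqrt n$, one obtains a bound uniform in $l\in\N_0$,
\[
\Big|\sqrt{2\pi v(h)n}\,\prob_{n,h,\omega}[L_n=l]-\ee^{-(l-\mu_n)^2/(2v(h)n)}\Big|\le\sqrt{v(h)/(2\pi)}\,\Psi_n(h,\omega)+\ee^{-cn},
\]
with $c>0$ uniform over $h\in H$ since $v$ is continuous and strictly positive, hence bounded away from $0$, on the compact $H\subset(h_c,+\infty)$. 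It therefore suffices to show $\sup_{h\in H}\Psi_n(h,\omega)\to0$ as $n\uparrow\infty$, for $\pae$.

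To this end I would split $\Psi_n=\Psi_n^{\le\epsilon}+\Psi_n^{>\epsilon}$ according to $|t|\le\epsilon$ or $|t|>\epsilon$, where $\epsilon>0$ is a large parameter to be sent to $+\infty$ only after the limit in $n$. For $\Psi_n^{>\epsilon}$ the Gaussian part contributes at most $\int_{|t|\ge\epsilon}\ee^{-v(h)t^2/2}\,\dd t$, and for the characteristic function part I would invoke the \textbf{key decay estimate}: for $\pae$ there are $c>0$ and $n_0(\omega)$ such that $|\phi_n(\theta)|\le 2\,\ee^{-cn\min\{\theta^2,1\}}$ for all $|\theta|\le\pi$, $h\in H$ and $n\ge n_0(\omega)$. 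Changing variables, $\int_{\epsilon<|t|\le\pi\sqrt n}|\Ex_{n,h,\omega}[\ee^{it(L_n-\mu_n)/\sqrt n}]|\,\dd t=\sqrt n\int_{\epsilon/\sqrt n<|\theta|\le\pi}|\phi_n(\theta)|\,\dd\theta\le 4\int_\epsilon^\infty\ee^{-ct^2}\,\dd t+4\pi\sqrt n\,\ee^{-cn}$, so $\Psi_n^{>\epsilon}(h,\omega)\le C\int_\epsilon^\infty\ee^{-c_0t^2}\,\dd t+o(1)$ as $n\uparrow\infty$, uniformly over $h\in H$, with $c_0:=\min\{c,\tfrac12\inf_{h\in H}v(h)\}>0$ and $C$ depending only on $H$.

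For $\Psi_n^{\le\epsilon}$ the integrand is bounded by $2$ and, for each fixed $t$, converges to $0$ uniformly over $h\in H$ by the quenched central limit theorem for the contact number: for $\pae$, $\Ex_{n,h,\omega}[\ee^{it(L_n-\mu_n)/\sqrt n}]\to\ee^{-v(h)t^2/2}$ uniformly for $h$ in compact subsets of $(h_c,+\infty)$ --- which we obtain, for $\pae$, from the exponential decay of correlations of $\prob_{n,h,\omega}$ in the localized phase established in \cite{cf:companionpaper}, the limiting variance being identified with $v(h)=\partial_h^2f(h)$ by Theorem~\ref{th:Cinfty} (alternatively, from the block decomposition described below). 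Dominated convergence then yields $\sup_{h\in H}\Psi_n^{\le\epsilon}(h,\omega)\to0$. Combining the two ranges, $\limsup_{n\uparrow\infty}\sup_{h\in H}\Psi_n(h,\omega)\le C\int_\epsilon^\infty\ee^{-c_0t^2}\,\dd t$ for every $\epsilon>0$, and letting $\epsilon\to+\infty$ concludes the proof modulo the decay estimate.

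The hard part will be this decay estimate $|\phi_n(\theta)|\le 2\ee^{-cn\min\{\theta^2,1\}}$, uniform over $h\in H$ and holding for $\pae$; the rest is routine assembly, given the CLT. The plan for it is a block decomposition of the contact configuration: cut $\{1,\dots,n\}$ into order $n$ windows separated by thin seams and condition on the contacts lying in the seams; given this, the contact numbers $Y_1,Y_2,\dots$ inside distinct windows are conditionally independent, whence
\[
|\phi_n(\theta)|\le\Ex_{n,h,\omega}\bigg[\,\prod_j\Big|\Ex_{n,h,\omega}\big[\ee^{i\theta Y_j}\,\big|\,\text{seams}\big]\Big|\,\bigg].
\]
On the event --- of $\prob_{n,h,\omega}$-probability at least $1-\ee^{-c'n}$, by the strong-localization estimates of \cite{cf:companionpaper} (absence of macroscopic gaps, see \cite[Proposition~1.9]{cf:companionpaper}, and exponential decay of correlations) --- that a fixed fraction of the windows are ``good'', meaning the seams leave a macroscopic free stretch and the charges in the window are not atypically large, each corresponding factor is bounded by $1-c\min\{\theta^2,1\}$, because the conditional law of $Y_j$ is then non-degenerate with a lower bound, uniform over the window, on the probability of assuming two consecutive values; bounding the remaining factors by $1$ and taking the expectation yields the estimate. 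The genuine difficulty is to carry this through with a constant $c$ uniform over $h\in H$ and insensitive to the unbounded charges present in each window, and this is precisely where the refined control of the localized phase from \cite{cf:companionpaper} is indispensable.
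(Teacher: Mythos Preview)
Your Fourier-inversion framework matches the paper's: split the integral at a truncation parameter $\lambda$ (your $\epsilon$), handle small frequencies via the quenched CLT (Theorem~\ref{th:CLT}), and bound the characteristic function for large frequencies. The paper controls the small-frequency piece through the sup-norm CDF error and an integration-by-parts identity rather than dominated convergence on characteristic functions, but this is a minor stylistic difference.

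The substantive divergence is in the decay estimate, and here your proposal remains a sketch exactly where you say it does. Your block/seam scheme needs each relevant seam to contain a contact for conditional independence to hold, and then needs the conditional law of each $Y_j$ to be uniformly non-degenerate regardless of the unbounded charges in that window; you do not explain how to secure either. The paper's device is considerably more concrete and sidesteps these issues: condition on \emph{all odd-site variables} $X_1,X_3,\ldots$. Whenever $X_{2k-1}=X_{2k+1}=1$, the single middle site $X_{2k}$ is, conditionally, an explicit Bernoulli with parameter $\ee^{h+\omega_{2k}}p(1)^2/(p(2)+\ee^{h+\omega_{2k}}p(1)^2)$, so its conditional characteristic function is computed exactly. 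This yields (Lemma~\ref{lem:charac})
\[
\big|\Ex_{n,h,\omega}[\ee^{\mathrm i z L_n}]\big|\le \Ex_{n,h,\omega}\big[\ee^{-(z^2/\pi^2)J_{n,h,\omega}}\big]
\le \ee^{-(z^2/(2\pi^2))\Ex_{n,h,\omega}[J_{n,h,\omega}]}+4\,\frac{\var_{n,h,\omega}[J_{n,h,\omega}]}{\Ex_{n,h,\omega}[J_{n,h,\omega}]^2},
\]
with $J_{n,h,\omega}=\sum_k(\text{Bernoulli variance at }2k)\,X_{2k-1}X_{2k+1}$. The remaining work (Lemma~\ref{lem:bounds_J}) is to show, for $\pae$ and uniformly over $h\in H$, that $\Ex_{n,h,\omega}[J_{n,h,\omega}]\gtrsim n$ and $\var_{n,h,\omega}[J_{n,h,\omega}]\lesssim n$: the first via a deterministic lower bound on $\Ex_{n,h,\omega}[X_{2k-1}X_{2k+1}]$ in terms of the local charges combined with the contact-density limit from Theorem~\ref{th:Cinfty} and an ergodic average, the second via the covariance decay Lemmas~\ref{lem:corr}--\ref{lem:decay}. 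Note this gives only $\ee^{-cnz^2}+O(1/n)$, not your cleaner $2\ee^{-cn\min\{z^2,1\}}$; the additive $O(1/n)$ term survives for $z$ of order one but is harmless after integrating over $[\lambda/\sqrt n,\pi]$ and multiplying by $\sqrt n$. Your stronger pointwise claim may well be true, but it is not what the argument delivers and is not needed.
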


\medskip

Theorem~\ref{th:LCLT} is established by estimating the characteristic
function of the relevant random variable.  This is a well-known
approach for independent random variables and it has been extended for
spin averages in non-disordered Gibbs models under suitable conditions
on the interaction and assuming that the CLT holds: the case of local
interactions is discussed in \cite{cf:DT77} and the case of
infinite-range pair interactions is tackled in
\cite{campanino1979}. The pinning model can be recast in a binary spin
model language and the contact number becomes the total magnetization
(see, e.g., \cite{zamparo2019}), but the potentials involved contain
arbitrarily many body interactions (this will be taken up again in
Section~\ref{sec:firstorderetc}).  Moreover, our model is
disordered. So we cannot apply the results in
\cite{cf:DT77,campanino1979} and we have to exploit the specific
structure of one-dimensional pinning models.

\medskip

\subsection{First order transitions, big jump phenomena, and phase separation droplet}
\label{sec:firstorderetc}

Big jump phenomena are widespread and they are the crucial mechanisms
in problems studied in depth from fundamental mathematics to applied
sciences: in fact, they have attracted attention in the mathematical
community in connection with several condensation phenomena under
conditioning (see \cite{ferrari2007} and references therein) and
anomalous transport (see \cite{zamparo2023} and references
therein). We refer to \cite{cf:godreche,vezzani2019,zamparo2021} and
references therein for big jump regimes in the physical literature.
Investigating the stability of big jump phenomena in random media is a
special case of the general and challenging issue of understanding the
effect of disorder on phase transitions and critical phenomena (see,
e.g., \cite[Chapter~5]{cf:G-SF} and references therein). To the best
of our knowledge, this has been done only for condensation phenomena
in the zero-range process with site-dependent jump rates
\cite{cf:GCS,cf:molino}, exploiting the fact that this process has a
factorized steady state with single-site marginals that depend solely
on the local disorder.  Coming to the pinning context, the way in
which disorder enters the model is more spatially structured than in
the zero-range process and this is one of the challenges of the
problem we face.  The problem for pinning models is challenging to the
point that the most natural generalization of the model we tackle, and
(to a large extent) solve here, is widely open as we explain in
Subsection~\ref{sec:HD}.



\smallskip

We now discuss further these and  related issues. 

\subsubsection{Analogy with the phase separation droplet phenomenon}
Proposition~\ref{prop:no_disorder} and our main result,
Theorem~\ref{th:main}, are actually results about \emph{canonical
Gibbs measures} and in this sense there is a direct link with the
problem much studied in statistical mechanics
\cite{cf:Wulff1,cf:Wulff2,cf:Wulff3} that we now outline. Consider the
(non disordered) nearest-neighbor ferromagnetic Ising model, without
external field, in the box
$\Lambda_n:=\{-n,\ldots,n\}^d\subset\mathbb{Z}^d$ with dimension $d\ge
2$ and at a temperature below the critical one.  For such value of the
temperature the infinite-volume Gibbs measure is not unique and there
are two (extremal) translational invariant measures, one with average
magnetization $m_+>0$ and the other one with average magnetization
$m_-=-m_+<0$: this phenomenology is related to the first order
character of the phase transition. If $+1$ boundary conditions are
imposed, then as $n$ goes to infinity the $\Lambda_n$ Gibbs measure
converges to the extremal Gibbs measure with average magnetization
$m_+$.  But if we condition the same model to have a given total
magnetization which is asymptotically equivalent to $m \vert \gL_n
\vert$ with $m \in (m_-, m_+)$, then sending $n$ to infinity the
system rearranges in a phase separation pattern so that at a
macroscopic level, i.e., when the box is rescaled to $[-1,1]^d$, the
system is partitioned into a region of the minus phase and a region of
the plus phase (and an interface or hypersurface separating the two
phases). In particular, if $m$ is not too far from $m_+$, then there
is a droplet -- the \emph{Wulff crystal} -- of the $m_-$ phase in a
sea of the $m_+$ phase. This is analogous to the big jump phenomenon
in the one dimensional pinning model and, in both cases, this
phenomenon is directly linked to the presence of a first order phase
transition. Of course in the pinning model the two phases are the
localized phase $h>h_c$, with contact density $\rho(h)$, and the
delocalized phase in which not only the contact density is zero, but
the system is really void of points. Instead in the Ising model the
two phases are perfectly symmetrical: statistically they can be mapped
one into the other by switching the spin signs. This difference is
discussed further in Subsection~\ref{sec:longrangemanybody}.

First order phase transitions are present in Ising models also when
$d=1$, but we have to step to long-range interactions and in this
context the droplet phenomenon has been studied in \cite{cf:CMP2017}
where it is shown that, under suitable hypotheses, it appears a phase
separation droplet that takes precisely the shape of a single interval
-- a big jump -- of the minus phase in the sea of the plus phase.

Results addressing the persistence of the phase separation droplet in
disordered Ising models have been developed in the context of diluted
models \cite{cf:Wouts}. But the case we consider is more closely
related to external random field as disorder. In the nearest-neighbor
Ising context that we have just outlined, it is known that the first
order phase transition persists if $d \ge 3$ and it does not if $d=2$
(see \cite{cf:AW,cf:BK-Ising3d,cf:DingXia,cf:DingZhuang} and
references therein). For the $d=1$ long-range case the situation is
more complex and the first order transition may or may not withstand
disorder \cite{cf:AW,cf:DingHuandMaia}. In this sense the pinning
model has a parallel in the $d=2$ Ising model and in some $d=1$
long-range cases, with the first order transition washed out by
disorder.

We present results on the canonical measure of the pinning model that
hold uniformly in the volume and down to the microscopic scale,
showing that the system is localized in a very strong sense, i.e., in
the sense that there is no trace of the delocalized phase: in making
this statement we play of course on the very different nature of the
two phases in the pinning model.  We are not aware of a detailed
pathwise analysis of the canonical Gibbs Ising measure with random
external field, even if it may be approachable with the recent
considerable progress in this domain
\cite{cf:AHP,cf:DingHuandMaia,cf:DingXia}.

\subsubsection{More about disorder smoothing and the connection with Ising models}
\label{sec:longrangemanybody}
As mentioned in Section \ref{sec:intro-model}, it is possible to write
the pinning model in terms of the binary random variables
$X_0,X_1,\ldots$, with the minor difference with respect to the Ising
case that the single spin state space is $\{0,1\}$ instead of
$\{-1,1\}$. The major difference is that the reference measure is not
the product measure, but a discrete point process. By this we mean
that, under the law $\prob$, the sequence $\{X_a\}_{a\in\N_0}$ is a
regenerative process with heavy-tailed inter-arrival distribution
(which is precisely the probability mass function $p$ of
Assumption~\ref{assump:p}), and of course we interpret a site $a$ such
that $X_a=1$ as a renewal epoch. In order to compare with the Ising
model, the variables $X_0,X_1,\ldots$ should be i.i.d.\ Bernoulli
variables: in this case the inter-arrival times would be geometric
random variables. We can change the reference measure to the measure
of i.i.d.\ Bernoulli variables, but the Hamiltonian of the pinning
model passes from containing only one body interactions to arbitrarily
many body interactions (see \cite[formula (4)]{zamparo2019}). The
structure and properties of this long-range and many-body Hamiltonian
and the associated specification has been only partially studied and
here we limit ourselves to the following considerations:
 

\begin{enumerate}[leftmargin=0.7 cm]
\item of course the long-range and many-body nature of the pinning
  interactions is at the very origin of the
  localization/delocalization phase transition, with the two phases
  that are very asymmetrical (one concentrates on all
  zeros). Despite this asymmetry, we point out the partial analogy
  with the 
   one-dimensional Ising model with
  long-range two-body interactions, which exhibits a first order phase
  transition if the interactions decay slow enough. It is precisely in
  this context that the droplet model has been investigated (see
  \cite{cf:CMP2017}), as well as the effect of a random external field
  on the transition (see in particular
  \cite{cf:AW,cf:COP1,cf:DingHuandMaia} that include also recent
  progress);
\item this discussion brings to the spotlight the analogy between the
  smoothing phenomenon for pinning models, i.e.,
  Theorem~\ref{th:smoothing} and
  \cite{giacomin2006_2,giacomin2006_3,caravenna2013}, and the
  \emph{Imry--Ma smoothing phenomenon} first mathematically
  established for Ising spin systems (and beyond) in the seminal work
  by Aizenman and Wehr \cite{cf:AW} (see also the recent progresses
  \cite{cf:AHP,cf:DingXia}).  The arguments leading to these two
  smoothing phenomena appear to be different, but there might be a
  bridge between them (beyond the fact that both mechanisms wash out
  the first order transition). A path for linking the two smoothing
  phenomena could be via studying larger classes of long-range
  many-body interactions for Ising spin systems in one dimension.
\end{enumerate}


\subsubsection{On higher dimensional pinning models and open problems}
\label{sec:HD}
There is a natural generalization of the pinning model that consists
in considering pinning for renewals in dimensions larger than one (see
\cite{cf:GO04,cf:GK} and references therein).  This is directly
stimulated by the applications and by the very first motivation, at
least from the historical viewpoint, for the pinning model: the DNA
denaturation transition. In fact, in the biophysics community the
pinning model goes under the name of \emph{Poland-Scheraga model}
\cite{cf:PS}.  And, always in the biophysics community, also a
\emph{generalized Poland-Scheraga model} \cite{cf:GO04} has been
introduced to account for mismatches in the pairing of the two DNA
strands, and, in mathematical terms, this model is precisely a pinning
model based on a two dimensional renewal process \cite{cf:GK}. For
this higher dimensional renewal system one can actually show that the
localization transition is present also in the disordered model (see
\cite{cf:BGK2,cf:legrand} and references therein), but the pure model has a
regime in which a big jump appears if the two stands are of
sufficiently different lengths. We stress that the big jump phenomenon
in the generalized Poland-Scheraga model is present without resorting
to conditioning, but it demands that the two polymer strands have
different lengths, and this is analogous to forcing the longer strand
to have less contacts than the \emph{natural amount}. Whether disorder
washes out the big jump phenomenon in this generalized context is
 open.

\smallskip

We conclude this section by pointing out that the main results we
present, Theorems~\ref{th:main} and \ref{th:smallscales}, hold
uniformly in contact densities belonging to closed subsets of $(0,1)$.
We do believe that the results should hold also for closed subsets of
$(0,1]$, i.e., for contact densities that can be arbitrarily close to
the fully packed system.  This is particularly intuitive for
Theorem~\ref{th:main}: in fact if the contact number is larger than $
n- c \log n$ for a constant $c>0$, then there is no room for a contact
gap of more than $c \log n$.  At the expense of a nontrivial amount of
technical work and supposing that disorder is bounded, we can extend
our results to contact densities that approach $1$ as $n$ becomes
large, but we are unable to cover all disorder laws in Assumption
\ref{assump:omega}. This appears to be a limit of the local CLT
approach we adopt.


\subsection{Organization of the rest of the paper}
 Section~\ref{sec:smoothing} reports the proof of Theorem
 \ref{th:smoothing}. Section~\ref{sec:noBJ} contains the proof of the
 local CLT in Theorem \ref{th:LCLT}, after that few relevant results
 from the companion paper \cite{cf:companionpaper} are
 recalled. Theorem \ref{th:LCLT} is one of the main ingredients for
 the proofs of Theorem~\ref{th:main} and of
 Theorem~\ref{th:smallscales}, which can be found respectively in
 Sections~\ref{sec:gaps} and \ref{sec:meso}.  In
 Section~\ref{sec:generalizations} a model proposed in biophysics for
 the denaturation transition of DNA chains with a ring structure
 (\emph{plasmids}) is considered, and the big jump phenomenon for this
 model is discussed in the light of Theorem~\ref{th:main}. Also
 results in which the sharp conditioning $L_n=l$ is replaced by a soft
 one are given. The Appendix is devoted to the proof of
 Proposition~\ref{prop:no_disorder}.


\section{Smoothing of disorder under subexponential conditions}
\label{sec:smoothing}

The proof of Theorem \ref{th:smoothing} follows the line of
\cite{giacomin2006_2,giacomin2006_3,caravenna2013}, and notably
exploits the \emph{rare stretch strategy} applied to a model in which
the charges are \emph{tilted}, according to the distribution
\eqref{eq:omegatilt} below, in order to achieve mean $h$, while in the
original model the mean $h$ is achieved by a shift, i.e., by adding
$h$ to $\go_a$ for every $a$. In fact, we explicitly exploit the
optimized result in \cite{caravenna2013} for the free energy smoothing
of the model with tilted disorder.  But tilting is equivalent to
shifting only in the Gaussian case, and this is used in
\cite{giacomin2006_3} to give a very compact proof.  For general
disorder the control of shifting by tilting is obtained in
\cite{giacomin2006_2} supposing that the charges are bounded.  In
\cite{caravenna2013} the boundedness is weakened to
Assumption~\ref{assump:omega}, at least if the variance of the charges
is below a certain threshold, which is infinite if the tail of the
charge law has super-exponential decay.  We generalize these results
by removing any condition beyond Assumptions \ref{assump:p} and
\ref{assump:omega}. From the technical viewpoint, unlike the arguments
in \cite{giacomin2006_2,caravenna2013}, our argument is direct in the
sense that we do not go through the convex conjugate of the free
energy function.

\begin{proof}[Proof of Theorem \ref{th:smoothing}]
For $s\in(0,\eta)$, with $\eta$ as in Assumption \ref{assump:omega},
and $h\in\Rl$ define
\begin{equation*}
f_s(h):=\limsup_{n\uparrow\infty}\frac{1}{n}\int_\Omega\log Z_{n,h}(\omega)\,\probd_{n,s}[\dd\omega]\,,
\end{equation*}
where we have introduced the \emph{tilted measure}
\begin{equation}
\label{eq:omegatilt}
  \probd_{n,s}[\dd\omega]:=\frac{\ee^{s\sum_{a=1}^n\omega_a}\probd[\dd\omega]}
       {\int_\Omega\ee^{s\sum_{a=1}^n\omega_a'}\probd[\dd\omega']}\,.
\end{equation}
A smoothing inequality with respect to this disorder tilt is provided
under Assumptions \ref{assump:p} and \ref{assump:omega} by
\cite[Theorem 1.5]{caravenna2013}, which states in particular that if
$h_c>-\infty$, then there exists $s_o\in(0,\eta)$ such that for all
$s\in(0,s_o)$
\begin{equation}
\label{eq:tiltCdH}
f_s(h_c)\le  (1+ \alpha)s^2 \,. 
\end{equation}
To control shifting by tilting, taking advantage of
\eqref{eq:tiltCdH}, we shall demonstrate that there exist constants
$\zeta>0$ and $\eta_o\in(0,\eta)$ such that
\begin{equation}
\int_\Omega (\omega_a-m_\sigma)\log Z_{n,h+\zeta(s-\sigma)}(\omega)\,\probd_{n,\sigma}[\dd\omega]
\ge\zeta\int_\Omega \Ex_{n,h+\zeta(s-\sigma),\omega}[X_a]\,\probd_{n,\sigma}[\dd\omega]
\label{eq:smoothing_goal}
\end{equation}
for all $n\in\N$, $h\in\Rl$, $s,\sigma\in(0,\eta_o)$, and
$a\in\{1,\ldots,n\}$. Since by direct computation we have
\begin{align}
  \nonumber
  &\partial_\sigma\int_\Omega\log Z_{n,h+\zeta(s-\sigma)}(\omega)\,\probd_{n,\sigma}[\dd\omega]\\
  \nonumber
      &~=\sum_{a=1}^n\Bigg\{\int_\Omega (\omega_a-m_\sigma)\log Z_{n,h+\zeta(s-\sigma)}(\omega)\,\probd_{n,\sigma}[\dd\omega]
      -\zeta\int_\Omega \Ex_{n,h+\zeta(s-\sigma),\omega}[X_a]\,\probd_{n,\sigma}[\dd\omega]\Bigg\}\,,
\end{align}
bound \eqref{eq:smoothing_goal} and an integration with respect to
$\sigma$ from 0 to $s$ imply that
\begin{equation*}
  \int_\Omega\log Z_{n,h+\zeta s}(\omega)\,\probd[\dd\omega]\le \int_\Omega\log Z_{n,h}(\omega)\,\probd_{n,s}[\dd\omega]
\end{equation*}
for every $n\in\N$, $h\in\Rl$, and $s\in(0,\eta_o)$.  Dividing the
latter by $n$ and then letting $n$ go to infinity, we deduce that
$f(h+\zeta s)\le f_s(h)$ for all $h\in\Rl$ and $s\in(0,\eta_o)$. It
therefore follows from \eqref{eq:tiltCdH} that if $h_c>-\infty$, then
$f(h_c+\delta)\le (\alpha+1)(\delta/\zeta)^2$ for all
$\delta\in(0,\min\{s_o,\eta_o\}/\zeta\})$, which proves the theorem
with $c:=(\alpha+1)/\zeta^2$.

\smallskip

\noindent\textit{The bound \eqref{eq:smoothing_goal}.}  The hypotheses
$\int_\Omega\omega_0\probd[\dd\omega]=0$ and
$\int_\Omega\omega_0^2\,\probd[\dd\omega]>0$ assure us that
$\int_\Omega\max\{0,\omega_0\}^{\!2}\,\probd[\dd\omega]>0$.  Set
\begin{equation*}
  \zeta:=\min\Bigg\{\frac{1}{8}\int_\Omega\max\{0,\omega_0\}^{\!2}\,\probd[\dd\omega],\frac{1}{4}\Bigg\}>0\,,
\end{equation*}
and for $s\in(0,\eta)$ define
\begin{equation*}
  m_s:=\frac{\int_\Omega \omega_0\ee^{s\omega_0}\probd[\dd\omega]}{\int_\Omega \ee^{s\omega_0}\probd[\dd\omega]}\,.
\end{equation*}
The dominated convergence theorem shows first that $\lim_{s\downarrow
  0}m_s=0$ and then that
\begin{equation*}
 \lim_{s\downarrow 0} \frac{\int_\Omega \max\{0,\omega_0-m_s\}^{\!2}\ee^{s\omega_0}\probd[\dd\omega]}
      {\int_\Omega \ee^{s\omega_0}\probd[\dd\omega]}=\int_\Omega \max\{0,\omega_0\}^{\!2}\,\probd[\dd\omega]\,,
\end{equation*}
so
\begin{equation}
 \frac{\int_\Omega \max\{0,\omega_0-m_s\}^{\!2}\ee^{s\omega_0}\probd[\dd\omega]}
      {\int_\Omega \ee^{s\omega_0}\probd[\dd\omega]}\ge\frac{1}{2}\int_\Omega \max\{0,\omega_0\}^{\!2}\,\probd[\dd\omega]\ge 4\zeta
      \label{eq:smoothing_zeta}
\end{equation}
for all $s\in(0,\eta_o)$ with a suitable choice of
$\eta_o\in(0,\eta)$. We claim that \eqref{eq:smoothing_goal} holds
with such constants $\zeta$ and $\eta_o$.

Fix $n\in\N$, $h\in\Rl$, $s,\sigma\in(0,\eta_o)$, and
$a\in\{1,\ldots,n\}$ and denote by ${}^a\omega$ the family of charges
where $\omega_a$ is replaced by $m_\sigma$: ${}^a\omega_b=m_\sigma$ if
$b=a$ and ${}^a\omega_b=\omega_b$ if $b\ne a$ . We have
\begin{align}
  \nonumber
  &\int_\Omega (\omega_a-m_\sigma)\log Z_{n,h+\zeta(s-\sigma)}(\omega)\,\probd_{n,\sigma}[\dd\omega]\\
  \nonumber
  &\qquad=\int_\Omega (\omega_a-m_\sigma)\Big[\log Z_{n,h+\zeta(s-\sigma)}(\omega)-\log Z_{n,h+\zeta(s-\sigma)}({}^a\omega)\Big]\probd_{n,\sigma}[\dd\omega]\\
  &\qquad\ge\int_\Omega \max\{0,\omega_a-m_\sigma\}\Big[\log Z_{n,h+\zeta(s-\sigma)}(\omega)-\log Z_{n,h+\zeta(s-\sigma)}({}^a\omega)\Big]\probd_{n,\sigma}[\dd\omega]\,,
\label{eq:smoothing_1}
\end{align}
where the equality is due to the fact that $\log
Z_{n,h+\zeta(s-\sigma)}({}^a\omega)$ is independent of $\omega_a$,
while the inequality is due to the fact that
$Z_{n,h+\zeta(s-\sigma)}(\omega)$ is increasing with respect to
$\omega_a$. Convexity gives for all
$\omega:=\{\omega_b\}_{b\in\N}\in\Omega$
\begin{equation*}
  \log Z_{n,h+\zeta(s-\sigma)}(\omega)-\log Z_{n,h+\zeta(s-\sigma)}({}^a\omega)\ge \Ex_{n,h+\zeta(s-\sigma),{}^a\omega}[X_a](\omega_a-m_\sigma)\,.
\end{equation*}
At the same time, the binary nature of $X_a$ allows us to write for
every $\omega:=\{\omega_b\}_{b\in\N}\in\Omega$
\begin{align}
  \nonumber
  \log Z_{n,h+\zeta(s-\sigma)}(\omega)-\log Z_{n,h+\zeta(s-\sigma)}({}^a\omega)&=\log\frac{\Ex\big[\ee^{\sum_{b=1}^n\{h+\zeta(s-\sigma)+\omega_b\}X_b}X_n\big]}
       {\Ex\big[\ee^{\sum_{b=1}^n\{h+\zeta(s-\sigma)+{}^a\omega_b\}X_b}X_n\big]}\\
       \nonumber
       &=\log\frac{\Ex\big[\ee^{\sum_{b=1}^n\{h+\zeta(s-\sigma)+\omega_b\}X_b}X_n\big]}
           {\Ex\big[\ee^{(m_\sigma-\omega_a)X_a+\sum_{b=1}^n\{h+\zeta(s-\sigma)+\omega_b\}X_b}X_n\big]}\\
           \nonumber
           &=-\log\Big\{1-(1-\ee^{m_\sigma-\omega_a})\Ex_{n,h+\zeta(s-\sigma),\omega}[X_a]\Big\}\\
           \nonumber
           &\ge (1-\ee^{m_\sigma-\omega_a})\Ex_{n,h+\zeta(s-\sigma),\omega}[X_a]\,.
\end{align}
Thus, we deduce from \eqref{eq:smoothing_1} that
\begin{align}
  \nonumber
  &\int_\Omega (\omega_a-m_\sigma)\log Z_{n,h+\zeta(s-\sigma)}(\omega)\,\probd_{n,\sigma}[\dd\omega]\\
  \nonumber
  &\qquad\ge \frac{1}{2} \int_\Omega \max\{0,\omega_a-m_\sigma\}^{\!2}\,\Ex_{n,h+\zeta(s-\sigma),{}^a\omega}[X_a]\,\probd_{n,\sigma}[\dd\omega]\\
  \nonumber
  &\qquad\quad +\frac{1}{2}\int_\Omega \max\{0,\omega_a-m_\sigma\}(1-\ee^{m_\sigma-\omega_a})\Ex_{n,h+\zeta(s-\sigma),\omega}[X_a]\,\probd_{n,\sigma}[\dd\omega]\\
  \nonumber
  &\qquad=\frac{1}{2}\frac{\int_\Omega\max\{0,\omega_0-m_\sigma\}^{\!2}\ee^{\sigma\omega_0}\probd[\dd\omega]}{\int_\Omega\ee^{\sigma\omega_0}\probd[\dd\omega]}
  \int_\Omega \Ex_{n,h+\zeta(s-\sigma),{}^a\omega}[X_a]\,\probd_{n,\sigma}[\dd\omega]\\
  &\qquad\quad +\frac{1}{2}\int_\Omega \max\{0,\omega_a-m_\sigma\}(1-\ee^{m_\sigma-\omega_a})\Ex_{n,h+\zeta(s-\sigma),\omega}[X_a]\,\probd_{n,\sigma}[\dd\omega]\,,
\label{eq:smoothing_2}
\end{align}
where the equality is due to the fact that the expectation
$\Ex_{n,h+\zeta(s-\sigma),{}^a\omega}[X_a]$ is independent of
$\omega_a$. Now we restore the charge $\omega_a$ in this expectation
by appealing again to the binary nature of $X_a$:
\begin{align}
  \nonumber
  \Ex_{n,h+\zeta(s-\sigma),{}^a\omega}[X_a]&=\frac{\Ex\big[X_a\ee^{\sum_{b=1}^n\{h+\zeta(s-\sigma)+{}^a\omega_b\}X_b}X_n\big]}
       {\Ex\big[\ee^{\sum_{b=1}^n\{h+\zeta(s-\sigma)+{}^a\omega_b\}X_b}X_n\big]}\\
       \nonumber
       &=\frac{\Ex\big[X_a\ee^{\sum_{b=1}^n\{h+\zeta(s-\sigma)+\omega_b\}X_b}X_n\big]}
           {\Ex\big[\ee^{-(\omega_a-m_\sigma)(X_a-1)+\sum_{b=1}^n\{h+\zeta(s-\sigma)+\omega_b\}X_b}X_n\big]}\\
           &\ge \ee^{-\max\{0,\omega_a-m_\sigma\}}\Ex_{n,h+\zeta(s-\sigma),\omega}[X_a]\,.
           \label{eq:smoothing_3}
\end{align}
Thus, combining \eqref{eq:smoothing_3} with \eqref{eq:smoothing_2} and
using \eqref{eq:smoothing_zeta}, as well as the fact that $1/2\ge
2\zeta$ by construction, we find
\begin{align}
  \nonumber
  &\int_\Omega (\omega_a-m_\sigma)\log Z_{n,h+\zeta(s-\sigma)}(\omega)\,\probd_{n,\sigma}[\dd\omega]\\
  \nonumber
  &~~\ge 2\zeta\int_\Omega \Big[\ee^{-\max\{0,\omega_a-m_\sigma\}}+\max\{0,\omega_a-m_\sigma\}(1-\ee^{m_\sigma-\omega_a})\Big]\Ex_{n,h+\zeta(s-\sigma),\omega}[X_a]\,\probd_{n,\sigma}[\dd\omega]\,.
\end{align}
This bound proves \eqref{eq:smoothing_goal} because
$\ee^{-\max\{0,z\}}+\max\{0,z\}(1-\ee^{-z})\ge 1/2$ for all $z\in\Rl$.
\end{proof}

\medskip

\section{Effect of disorder under contact number conditioning}
\label{sec:noBJ}

\subsection{Results from the companion paper \cite{cf:companionpaper}}
\label{sec:richiami}

The probability measure $\prob_{n,h,\omega}$ associated to the model
we consider is absolutely continuous with respect to the renewal law
$\prob$ and the relative density is the exponential of an energy term
which contains only one body potentials (see
\eqref{eq:model_binary}). From this it is not difficult to infer a
fundamental conditional independence property between consecutive
stretches of the polymer. In order to formulate such property, let
$\mathcal{M}_n$ be the set of functions $\phi:\{0,1\}^{n+1}\to\Cm$
such that $|\phi(x_0,\ldots,x_n)|\le 1$ for every
$x_0,\ldots,x_n\in\{0,1\}$ and denote by $\vartheta$ the left shift
that maps the sequence $\omega:=\{\omega_a\}_{a\in\N_0}$ to
$\vartheta\omega:=\{\omega_{a+1}\}_{a\in\N_0}$. The following
factorization, which implies the mentioned conditional independence,
is provided by \cite[Lemma 2.1]{cf:companionpaper}: for every
$h\in\Rl$, $\omega\in\Omega$, integers $0\le a\le n$, and functions
$\phi\in\mathcal{M}_a$ and $\psi\in\mathcal{M}_{n-a}$
\begin{align}
  \nonumber
  &\Ex_{n,h,\omega}\Big[\phi(X_0,\ldots,X_a)\psi(X_a,\ldots,X_n)\Big|X_a=1\Big]\\
  &\qquad=\Ex_{a,h,\omega}\big[\phi(X_0,\ldots,X_a)\big]\Ex_{n-a,h,\vartheta^a\omega}\big[\psi(X_0,\ldots,X_{n-a})\big]\,,
\label{eq:ci-fact1}
\end{align}
so that
\begin{align}
  \nonumber
  &\Ex_{n,h,\omega}\Big[\phi(X_0,\ldots,X_a)\psi(X_a,\ldots,X_n)\Big|X_a=1\Big]\\
  &\qquad=\Ex_{n,h,\omega}\big[\phi(X_0,\ldots,X_a)\big|X_a=1\big]\Ex_{n,h,\omega}\big[\psi(X_a,\ldots,X_n)\big|X_a=1\big]\,.
  \label{eq:ci-fact2}
\end{align}

\smallskip

When aiming at decorrelation estimates we will make use of the next
result, which is the first part of \cite[Lemma
  3.1]{cf:companionpaper}. Given two measurable complex functions
$\Phi$ and $\Psi$ over $(\mathcal{S},\mathfrak{S})$ we define their
covariance under the polymer measure $\prob_{n,h,\omega}$ as
\begin{equation*}
  \cov_{n,h,\omega}[\Phi,\Psi]:=\Ex_{n,h,\omega}\big[\Phi\overline{\Psi}\,\big]-\Ex_{n,h,\omega}\big[\Phi\big]\Ex_{n,h,\omega}\big[\,\overline{\Psi}\,\big]\,,
\end{equation*}
provided that all the expectations exist.  As usual, $\bar{\zeta}$ is
the complex conjugate of $\zeta\in\Cm$.  The variance
$\cov_{n,h,\omega}[\Phi,\Phi]$ is denoted by
$\var_{n,h,\omega}[\Phi]$.

\medskip

\begin{lemma}
  \label{lem:corr}
  For every $h\in\Rl$, $\omega:=\{\omega_c\}_{c\in\N_0}\in\Omega$,
  integers $0\le a\le b\le n$, and functions $\phi\in\mathcal{M}_a$
  and $\psi\in\mathcal{M}_{n-b}$
\begin{equation*}
\bigg|\cov_{n,h,\omega}\Big[\phi(X_0,\ldots,X_a),\psi(X_b,\ldots,X_n)\Big]\bigg|
\le 2\sum_{i=0}^{a-1}\sum_{j=b+1}^\infty\Ex_{j-i,h,\vartheta^i\omega}^{\otimes 2}\bigg[\prod_{k=1}^{j-i-1}(1-X_kX_k')\bigg]\,.
\end{equation*}
\end{lemma}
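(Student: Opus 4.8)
The plan is to control the covariance by coupling two independent copies of the contact configuration and then reducing everything to the conditional independence property \eqref{eq:ci-fact1}. Let $\prob_{n,h,\omega}^{\otimes2}$ be the law of two independent copies $\{X_c\}_{c=0}^{n}$ and $\{X_c'\}_{c=0}^{n}$ drawn from $\prob_{n,h,\omega}$, and abbreviate $\Phi:=\phi(X_0,\ldots,X_a)$, $\Psi:=\psi(X_b,\ldots,X_n)$, with $\Phi',\Psi'$ the corresponding quantities for the primed copy. Expanding the product and using that the two copies are i.i.d.\ one obtains the elementary identity
\[
2\,\cov_{n,h,\omega}\big[\phi(X_0,\ldots,X_a),\psi(X_b,\ldots,X_n)\big]=\Ex_{n,h,\omega}^{\otimes2}\big[(\Phi-\Phi')\,\overline{(\Psi-\Psi')}\big]\,.
\]
Since $|\Phi-\Phi'|\le2$ and $|\Psi-\Psi'|\le2$, it then suffices to show that $(\Phi-\Phi')\overline{(\Psi-\Psi')}$ has zero mean on the event
\[
G:=\big\{X_c=X_c'=1\ \text{for some }c\in\{a,a+1,\ldots,b\}\big\}
\]
and to estimate $\prob_{n,h,\omega}^{\otimes2}[G^c]$, which will give $|\cov_{n,h,\omega}[\,\cdots\,]|\le 2\,\prob_{n,h,\omega}^{\otimes2}[G^c]$.

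For the first, and crucial, step I would argue as follows. On $G$, set $C:=\min\{c\in\{a,\ldots,b\}:X_c=X_c'=1\}$ and condition on the two histories $(X_0,\ldots,X_C)$ and $(X_0',\ldots,X_C')$. Since $C$ is a deterministic function of these histories and $a\le C\le b$, the variables $\Phi$ and $\Phi'$ are measurable with respect to the conditioning, while $\Psi$ and $\Psi'$ are functions of the post-$C$ stretches $(X_C,\ldots,X_n)$ and $(X_C',\ldots,X_n')$ only. Applying \eqref{eq:ci-fact1} to each copy at the renewal $C$ — legitimate because, conditionally on the histories, $\{C=c\}$ holds and $X_C=X_C'=1$ — shows that the two post-$C$ stretches are conditionally independent and both distributed, up to the shift by $C$, as $\prob_{n-C,h,\vartheta^C\omega}$; hence $\Psi$ and $\Psi'$ are conditionally i.i.d., so $\Ex_{n,h,\omega}^{\otimes2}[\overline\Psi-\overline{\Psi'}\mid\text{histories}]=0$ on $G$, and therefore $\Ex_{n,h,\omega}^{\otimes2}[(\Phi-\Phi')\overline{(\Psi-\Psi')}\mathds{1}_G]=0$.

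For the second step, I would use that on $G^c$ there is no common renewal in $\{a,\ldots,b\}$; as $X_0=X_0'=1$ and $X_n=X_n'=1$, one can define the last common renewal $I\in\{0,\ldots,a-1\}$ before the block and the first common renewal $J\in\{b+1,\ldots,n\}$ after it, so that $G^c$ decomposes as a disjoint union over pairs $(i,j)$ of the events $\{X_i=X_i'=1,\ X_j=X_j'=1,\ X_kX_k'=0\ \text{for }i<k<j\}$. Applying \eqref{eq:ci-fact1} to each copy, first at the renewal $i$ and then at $j$, factorizes the probability of such an event as $(\prob_{n,h,\omega}[X_i=1]\,\prob_{n-i,h,\vartheta^i\omega}[X_{j-i}=1])^2$ times $\Ex_{j-i,h,\vartheta^i\omega}^{\otimes2}[\prod_{k=1}^{j-i-1}(1-X_kX_k')]$, the two independent middle blocks now living under $\prob_{j-i,h,\vartheta^i\omega}^{\otimes2}$. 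Bounding the prefactor by $1$ and summing over $i\in\{0,\ldots,a-1\}$ and $j\ge b+1$ (the extra terms being non-negative) yields
\[
\prob_{n,h,\omega}^{\otimes2}[G^c]\le\sum_{i=0}^{a-1}\sum_{j=b+1}^{\infty}\Ex_{j-i,h,\vartheta^i\omega}^{\otimes2}\bigg[\prod_{k=1}^{j-i-1}(1-X_kX_k')\bigg]\,,
\]
which combined with the reduction above is the claim. The degenerate cases $a=0$ and $b=n$ are trivial, since then $\Phi$ or $\Psi$ is constant and, when $a=0$, the right-hand side is an empty sum.

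The step I expect to be delicate is the vanishing of the contribution of $G$: one must check that $C$ genuinely behaves as a stopping index with respect to the growing pair of histories, so that the single-copy factorization \eqref{eq:ci-fact1} can be invoked with the post-$C$ stretch, and that conditioning on the two histories — which jointly encode the definition of $C$ — really leaves the two post-$C$ stretches independent and identically distributed. The remaining steps are bookkeeping with \eqref{eq:ci-fact1} together with a union bound, the opening covariance identity being standard.
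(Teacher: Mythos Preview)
Your proof is correct and follows essentially the same route as the one the paper refers to: the replica trick with two independent copies, splitting according to whether they share a contact in $\{a,\ldots,b\}$, killing the contribution on $G$ via conditional independence at the first common contact, and bounding $\prob_{n,h,\omega}^{\otimes 2}[G^c]$ by decomposing over the last common contact before $a$ and the first after $b$ and applying \eqref{eq:ci-fact1} twice. The paper itself does not prove the lemma but cites the companion paper \cite{cf:companionpaper}, describing the argument exactly as you implement it; your concern about the stopping-index role of $C$ is handled by summing over the deterministic values $c\in\{a,\ldots,b\}$ and noting that $\{C=c\}$ is measurable with respect to the pre-$c$ pair of histories.
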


\medskip

Lemma~\ref{lem:corr} is demonstrated in \cite{cf:companionpaper} by
exploiting the trick of working with two \textit{replica} of the
system and then of using the conditional independence property
\eqref{eq:ci-fact2} in the event that the two replica meet in between
the sites $a$ and $b$. In fact, the lemma involves two independent
copies of the renewal process, $\{S_i\}_{i\in\N_0}$ and
$\{S_i'\}_{i\in\N_0}$ with associated contact indicators
$\{X_a\}_{a\in\N_0}$ and $\{X_a'\}_{a\in\N_0}$, defined on the
measurable space $(\mathcal{S}^2,\mathfrak{S}^{\otimes 2})$. The
product measure $\prob_{n,h,\omega}^{\otimes 2}$ on
$(\mathcal{S}^2,\mathfrak{S}^{\otimes 2})$ is the two-polymer measure
and $\Ex_{n,h,\omega}^{\otimes 2}$ is the corresponding expectation.

\smallskip

It will not come as a surprise then that the following estimate, which
is \cite[Lemma 3.2]{cf:companionpaper}, is of crucial importance. Here
and below the assumptions in force are Assumptions \ref{assump:p} and
\ref{assump:omega}.

\medskip

\begin{lemma}
  \label{lem:decay}
  For every closed set $H\subset(h_c,+\infty)$ there exist constants $\gamma>0$ and $G>0$ such
  that for all $n\in\N$
\begin{equation*}
  \Exd\Bigg[\sup_{h\in H}\Ex_{n,h,\cdot}^{\otimes 2}\bigg[\prod_{k=1}^{n-1}(1-X_kX_k')\bigg]\Bigg]\le G\,\ee^{-\gamma n}\,.
\end{equation*}
\end{lemma}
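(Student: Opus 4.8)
The plan is to read the left-hand side probabilistically and to reduce the estimate to a strict free energy inequality for a two-replica system in which the two copies are forbidden to share contacts. Since $\prob_{n,h,\omega}^{\otimes 2}$ has density proportional to $\ee^{\sum_{a=1}^n(h+\omega_a)(X_a+X_a')}X_nX_n'$ with respect to $\prob^{\otimes 2}$,
\begin{equation*}
\Ex_{n,h,\omega}^{\otimes 2}\bigg[\prod_{k=1}^{n-1}(1-X_kX_k')\bigg]=\frac{\widetilde Z_{n,h}(\omega)}{Z_{n,h}(\omega)^2}\,,\qquad
\widetilde Z_{n,h}(\omega):=\Ex^{\otimes 2}\bigg[\ee^{\sum_{a=1}^n(h+\omega_a)(X_a+X_a')}X_nX_n'\prod_{k=1}^{n-1}(1-X_kX_k')\bigg]
\end{equation*}
is the partition function of two replicas conditioned to meet at $n$ but at no site strictly in between. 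Decomposing a generic pair of trajectories according to their first common contact and applying \eqref{eq:ci-fact1} to each replica at that contact gives the renewal identity $Z_{n,h}(\omega)^2=\sum_{m=1}^n\widetilde Z_{m,h}(\omega)\,Z_{n-m,h}(\vartheta^m\omega)^2$, which exhibits $\widetilde Z_{n,h}(\omega)/Z_{n,h}(\omega)^2$ as the inter-arrival probability of the renewal of common contacts embedded in the two-replica polymer; the claim is morally that this renewal is localized. Equivalently, writing $N_n:=\sum_{k=1}^{n-1}X_kX_k'$ for the number of common contacts inside the bulk and using $\prod_{k=1}^{n-1}(1-X_kX_k')\le\ee^{-\gl N_n}$ for every $\gl>0$, one bounds the quantity by $Z_{n,h}^{(\gl)}(\omega)/Z_{n,h}(\omega)^2$, with $Z_{n,h}^{(\gl)}(\omega):=\Ex^{\otimes 2}[\ee^{\sum_{a=1}^n(h+\omega_a)(X_a+X_a')-\gl N_n}X_nX_n']$ the partition function of two replicas endowed with a repulsion of strength $\gl$ between common contacts.

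The core of the argument is to prove that for a fixed $\gl>0$ the free energy $g_\gl(h):=\lim_{n}\tfrac1n\Exd[\log Z_{n,h}^{(\gl)}]$ — which exists by superadditivity, since gluing two repulsive pair-configurations at a common contact creates exactly one new common contact, so $Z_{n+m,h}^{(\gl)}(\omega)\ge\ee^{-\gl}Z_{n,h}^{(\gl)}(\omega)\,Z_{m,h}^{(\gl)}(\vartheta^n\omega)$ — satisfies the strict inequality $g_\gl(h)<2f(h)=g_0(h)$, uniformly for $h$ in compact subsets of $(h_c,+\infty)$. The soft half is a positive density of common contacts: by independence of the two replicas given $\omega$, $\Ex_{n,h,\omega}^{\otimes 2}[N_n]=\sum_{k=1}^{n-1}\Ex_{n,h,\omega}[X_k]^2\ge\frac1{n-1}(\Ex_{n,h,\omega}[L_n]-1)^2$, which by Theorem~\ref{th:Cinfty} equals $\rho(h)^2n\,(1+o(1))$ with $\rho(h)>0$. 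The harder half is that $N_n\ge\gd n$ for some $\gd>0$ with probability $1-\ee^{-c'n}$: here one uses that by Proposition~\ref{th:LD} each replica carries at least $(\rho(h)-\gep)n$ contacts outside an event of probability $\ee^{-cn}$, and that by \eqref{eq:ci-fact2}, applied at successive common contacts, the common contacts form a space- and disorder-inhomogeneous Markov renewal whose increments are first-common-contact times of shifted finite-volume systems, which one controls with the same inputs. Inserting $\prob_{n,h,\omega}^{\otimes 2}[N_n\le\gd n]\le\ee^{-c'n}$ into $Z_{n,h}^{(\gl)}(\omega)=Z_{n,h}(\omega)^2\,\Ex_{n,h,\omega}^{\otimes 2}[\ee^{-\gl N_n}]\le Z_{n,h}(\omega)^2\big(\ee^{-\gl\gd n}+\prob_{n,h,\omega}^{\otimes 2}[N_n\le\gd n]\big)$ yields $g_\gl(h)\le2f(h)-\gga_0$ for some $\gga_0>0$.

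With the strict inequality in hand the conclusion follows by concentration. The quantity to be bounded is $\le 1$ pointwise, while $\log Z_{n,h}^{(\gl)}$ and $\log Z_{n,h}$ are coordinatewise Lipschitz functions of the independent charges, which have finite exponential moments by Assumption~\ref{assump:omega}, hence they concentrate around their means with at least stretched-exponential tails. Therefore, for $\pae$ and all large $n$, $\log Z_{n,h}^{(\gl)}(\omega)-2\log Z_{n,h}(\omega)\le-\tfrac{\gga_0}{2}n$, and so
\begin{equation*}
\Exd\bigg[\sup_{h\in H}\Ex_{n,h,\cdot}^{\otimes 2}\bigg[\prod_{k=1}^{n-1}(1-X_kX_k')\bigg]\bigg]\le\ee^{-\gga_0 n/2}+\probd\big[\text{the concentration estimate fails}\big]\,,
\end{equation*}
the last probability being (stretched-)exponentially small. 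The supremum over the compact $H$ is dealt with by covering $H$ with an $n^{-2}$-net and noting that $h\mapsto\log Z_{n,h}(\omega)$ is $n$-Lipschitz because $0\le L_n\le n$; this costs only a polynomial factor absorbed into $G$.

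The step I expect to be genuinely hard is the strict inequality $g_\gl(h)<2f(h)$, equivalently the quenched lower large-deviation bound $\prob_{n,h,\omega}^{\otimes 2}[N_n\le\gd n]\le\ee^{-c'n}$: the first-moment bound $\Ex_{n,h,\omega}^{\otimes 2}[N_n]\gtrsim\rho(h)^2n$ by itself yields only $g_\gl(h)\le2f(h)$, since $\tfrac1n\log\Ex_{n,h,\omega}^{\otimes 2}[\ee^{-\gl N_n}]$ may tend to $0$, so one must promote the first moment estimate to an exponential concentration of $N_n$. This is precisely where the Markov-renewal structure supplied by \eqref{eq:ci-fact2}, together with the large deviations of $L_n$ of Proposition~\ref{th:LD}, must be exploited carefully, and where the uniformity both in $\omega$ and in $h\in H$ has to be tracked.
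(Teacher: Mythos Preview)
Your reduction to a ratio of partition functions and the concentration step at the end are fine, but the heart of the argument---the strict inequality $g_\lambda(h)<2f(h)$---is not established, and the route you sketch for it is circular. You propose to bound $\prob_{n,h,\omega}^{\otimes2}[N_n\le\delta n]$ exponentially by exploiting the Markov renewal of common contacts supplied by \eqref{eq:ci-fact2}. But the inter-arrival law of that renewal, at a common contact $i$ and in a system of size $n$, is exactly
\[
\prob_{n,h,\omega}^{\otimes2}\big[X_kX_k'=0\ \text{for }i<k<j,\ X_jX_j'=1\,\big|\,X_iX_i'=1\big]
=\Ex_{j-i,h,\vartheta^i\omega}^{\otimes2}\bigg[\prod_{k=1}^{j-i-1}(1-X_kX_k')\bigg]\cdot(\text{tail factor}),
\]
i.e.\ the very quantity whose exponential tail is the content of the lemma, evaluated for shifted disorders and smaller volumes. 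So ``controlled with the same inputs'' presupposes what you are trying to prove. The auxiliary remark that each replica has at least $(\rho(h)-\epsilon)n$ contacts with probability $1-\ee^{-cn}$ does not help either: when $\rho(h)<1/2$ two subsets of $\{1,\dots,n\}$ of that size can be disjoint, so this yields no lower bound on $N_n$. Your first-moment computation $\Ex^{\otimes2}_{n,h,\omega}[N_n]\gtrsim\rho(h)^2n$ only gives $g_\lambda(h)\le 2f(h)$, not the strict inequality.

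For comparison, the present paper does not prove this lemma; it is imported from \cite[Lemma~3.2]{cf:companionpaper}. What the paper does record (see the paragraph following Lemma~\ref{lem:decay}) is that the key ingredient there is a \emph{single-replica} quantity, the alternative free energy $\mu(h):=-\limsup_n\frac1n\log\Exd\big[\prob_{n,h,\cdot}[T_1=n]\big]$, together with the fact that $\mu(h)>0$ for $h>h_c$, which is obtained from the concentration inequality for $\log Z_{n,h}$ (Theorem~\ref{th:concentration}). The point is that $\prob_{n,h,\omega}[T_1=n]=\ee^{h+\omega_n}p(n)/Z_{n,h}(\omega)$ is a one-copy object whose exponential decay follows directly from $f(h)>0$ plus concentration of $\log Z_{n,h}$, without any two-replica input. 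One then reduces the two-replica non-meeting probability to sums of products of single-replica excursion probabilities, and $\mu(h)>0$ closes the estimate. This breaks precisely the circularity in your sketch: the analytic input is at the one-replica level, where it can be proven independently. If you want to salvage your program, the missing step is an a~priori bound of the form $\Exd\big[\sup_{h\in H}\prob_{n,h,\cdot}^{\otimes2}[X_kX_k'=0\text{ for }1\le k\le K]\big]\le 1-p_0$ for some fixed $K$ and $p_0>0$; for bounded disorder a one-step minorization via \eqref{eq:lower_bound_pcontact} would do, but under Assumption~\ref{assump:omega} alone this uniform bound fails, which is why the companion paper's detour through $\mu(h)$ is needed.
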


\medskip

One of the ingredients of Lemma \ref{lem:decay} is an
\textit{alternative free energy $\mu(h)$}, which can be defined by
 \begin{equation}
 \label{eq:mu-def}
 \mu(h):=-\limsup_{n \uparrow \infty} \frac 1n \log \Exd \Big[\prob_{n,h,\cdot}[T_1=n]\Big]\,.
 \end{equation}
Like $f(h)$, we have $\mu(h)>0$ for $h>h_c$ and $\mu(h)=0$ for $h\le
h_c$ (when $h_c>-\infty$) (see \cite[Proposition 1.7 and Corollary
  2.5]{cf:companionpaper}). The proof of these facts relies on a
version of the McDiarmid's inequality for functions of independent
random variables under subexponential conditions, which enables to
show the following concentration inequality for the finite-volume free
energy (see \cite[Theorem 2.4]{cf:companionpaper}).

\medskip

\begin{theorem}
  \label{th:concentration}
  There exists a constant $\kappa>0$ such that for every $n\in\N$, $h\in\Rl$,
  and $u\ge 0$
 \begin{equation*}
\probd\bigg[\Big|\log Z_{n,h}-\Exd\big[\log Z_{n,h}\big]\Big|>u\bigg]\le 2\ee^{-\frac{\kappa u^2}{n+u}}\,.
  \end{equation*}
\end{theorem}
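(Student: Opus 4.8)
The plan is to run the exponential martingale (McDiarmid) method in a subexponential setting, reducing everything to a moment generating function estimate that involves only the law of a single charge. The crucial structural input is that $\log Z_{n,h}$ is $1$-Lipschitz in each charge: since $\partial_{\omega_a}\log Z_{n,h}(\omega)=\Ex_{n,h,\omega}[X_a]\in[0,1]$ for $a\in\{1,\dots,n\}$ and $\log Z_{n,h}$ does not depend on $\omega_0$, replacing a single charge $\omega_a$ by $\omega_a'$ changes $\log Z_{n,h}$ by at most $|\omega_a-\omega_a'|$, with Lipschitz constant exactly $1$, uniformly in $n$, $h$, and the remaining charges. Setting $\mathcal F_k:=\sigma(\omega_1,\dots,\omega_k)$ and decomposing $\log Z_{n,h}-\Exd[\log Z_{n,h}]=\sum_{k=1}^nD_k$ with $D_k:=\Exd[\log Z_{n,h}\mid\mathcal F_k]-\Exd[\log Z_{n,h}\mid\mathcal F_{k-1}]$, one checks, using independence of the charges together with the Lipschitz bound, that $D_k=\phi_k(\omega_k)$ where $\phi_k$ is $\mathcal F_{k-1}$-measurable and, as a function of its last variable, is $1$-Lipschitz and centered, i.e.\ $\Exd[\phi_k(\omega_k)\mid\mathcal F_{k-1}]=0$.

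The heart of the proof is a uniform conditional bound on the moment generating function of the increments: there exist $\lambda_o\in(0,\eta)$ and $c>0$, depending only on the law of $\omega_0$, such that $\Exd[\ee^{\lambda D_k}\mid\mathcal F_{k-1}]\le\ee^{c\lambda^2}$ for all $|\lambda|\le\lambda_o$ and all $k$. To get this, let $\phi$ be any $1$-Lipschitz function with $\Exd[\phi(\omega_0)]=0$ and let $\omega_0'$ be an independent copy of $\omega_0$. By Jensen's inequality, then by the symmetry of $\phi(\omega_0)-\phi(\omega_0')$ under exchange of $\omega_0$ and $\omega_0'$, and finally by $|\phi(\omega_0)-\phi(\omega_0')|\le|\omega_0-\omega_0'|$,
\[
\Exd\big[\ee^{\lambda\phi(\omega_0)}\big]\le\Exd\big[\ee^{\lambda(\phi(\omega_0)-\phi(\omega_0'))}\big]=\Exd\big[\cosh\!\big(\lambda(\phi(\omega_0)-\phi(\omega_0'))\big)\big]\le\Exd\big[\cosh\!\big(\lambda(\omega_0-\omega_0')\big)\big]=M(\lambda)M(-\lambda),
\]
where $M(\lambda):=\Exd[\ee^{\lambda\omega_0}]$ is finite for $|\lambda|<\eta$ by Assumption~\ref{assump:omega}. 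Since $\Exd[\omega_0]=0$, the even function $\lambda\mapsto\log M(\lambda)+\log M(-\lambda)$ is smooth, vanishes together with its first derivative at $0$, and has bounded second derivative on any $[-\lambda_o,\lambda_o]\subset(-\eta,\eta)$, hence is bounded by $c\lambda^2$ there; applying this conditionally to each $\phi_k$ yields the claim, with constants depending on nothing but the charge law.

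The rest is the routine Chernoff/optimization step. For $0<\lambda\le\lambda_o$ and either sign, iterating the conditional bound over the martingale via the tower property gives
\[
\probd\big[\pm(\log Z_{n,h}-\Exd[\log Z_{n,h}])>u\big]\le\ee^{-\lambda u}\,\Exd\big[\ee^{\pm\lambda\sum_{k=1}^nD_k}\big]\le\ee^{-\lambda u+cn\lambda^2}.
\]
Choosing $\lambda=\min\{\lambda_o,\,u/(2cn)\}$ produces a bound of the form $\ee^{-\min\{u^2/(4cn),\,\lambda_o u/2\}}$, and one checks elementarily that $\min\{u^2/(4cn),\,\lambda_o u/2\}\ge\kappa u^2/(n+u)$ with $\kappa:=\min\{1/(4c),\,\lambda_o/2\}$; summing the two one-sided estimates gives the factor $2$ and the theorem. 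I expect the only genuine subtlety to lie in the martingale step — checking that $\phi_k$ really is $1$-Lipschitz and centered and, above all, that $\lambda_o$ and $c$ in the MGF bound are independent of $k$, $n$, $h$, and of $\mathcal F_{k-1}$ — which is exactly why having Lipschitz constant precisely $1$ (not merely finite) is what makes the whole scheme uniform in $n$ and $h$.
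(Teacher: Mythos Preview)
Your proof is correct and matches the route the paper indicates: the paper does not prove this theorem here but imports it from the companion paper \cite{cf:companionpaper}, saying only that it ``relies on a version of the McDiarmid's inequality for functions of independent random variables under subexponential conditions'', which is exactly the exponential-martingale scheme you carry out, driven by the coordinate-wise $1$-Lipschitz property of $\log Z_{n,h}$.
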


\medskip

Lemmas \ref{lem:corr} and \ref{lem:decay} are at the basis of Theorem
\ref{th:Cinfty}. The latter implies that the contact number $L_n$
satisfies the following quenched CLT when $h>h_c$ (see \cite[part
  $(ii)$ of Theorem 1.4]{cf:companionpaper}).

\medskip

\begin{theorem}
  \label{th:CLT}
  The following properties hold for $\pae$: for every compact set $H\subset (h_c,+\infty)$
 \begin{equation*}
 \adjustlimits\limsmallspace_{n\uparrow\infty}\sup_{h\in H}\,\sup_{u\in\Rl}\,\Bigg|\prob_{n,h,\omega}\bigg[\frac{L_n-\Ex_{n,h,\omega}[L_n]}{\sqrt{n v(h)}}\le u\bigg]-
     \frac{1}{\sqrt{2\pi}}\int_{-\infty}^u\ee^{-\frac{1}{2}z^2}\dd z\Bigg|=0
\end{equation*}
with $v(h):=\partial_h^2 f(h)>0$.
\end{theorem}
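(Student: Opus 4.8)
\emph{The plan} is to reduce Theorem~\ref{th:CLT} to convergence of all moments of the rescaled contact number and then read everything off Theorem~\ref{th:Cinfty}; this is the implication ``Theorem~\ref{th:Cinfty}~$\Rightarrow$~CLT'' alluded to in the text. Write $Y_n:=(L_n-\Ex_{n,h,\omega}[L_n])/\sqrt{n\,v(h)}$, let $\Phi$ be the standard Gaussian distribution function, and let $\mu_m$ be its $m$-th moment ($\mu_m=(m-1)!!$ for $m$ even and $\mu_m=0$ for $m$ odd). First I would show that it suffices to prove that, for $\pae$, for every compact $H\subset(h_c,+\infty)$ and every $m\in\N$,
\begin{equation*}
\lim_{n\uparrow\infty}\sup_{h\in H}\big|\Ex_{n,h,\omega}[Y_n^m]-\mu_m\big|=0 .
\end{equation*}
Granting this, fix the good $\omega$ provided by Theorem~\ref{th:Cinfty} (which already works for all compacts and all orders simultaneously). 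For each fixed $h\in H$ the convergence of all moments together with the determinacy of the Gaussian law (Carleman's criterion) gives, by the method of moments, $Y_n\Rightarrow\mathcal N(0,1)$ under $\prob_{n,h,\omega}$; since $\Phi$ is continuous, P\'olya's theorem upgrades this to $\sup_{u\in\Rl}|\prob_{n,h,\omega}[Y_n\le u]-\Phi(u)|\to0$. Uniformity in $h\in H$ then follows by a subsequence argument: if it failed, along some $n_k\uparrow\infty$ there would be $h_k\in H$ and $u_k\in\Rl$ with $|\prob_{n_k,h_k,\omega}[Y_{n_k}\le u_k]-\Phi(u_k)|$ bounded away from $0$; but the displayed uniform moment convergence forces $\Ex_{n_k,h_k,\omega}[Y_{n_k}^m]\to\mu_m$ for every $m$, hence the laws of $Y_{n_k}$ under $\prob_{n_k,h_k,\omega}$ converge weakly to $\mathcal N(0,1)$ and, by P\'olya again, $\sup_u|\prob_{n_k,h_k,\omega}[Y_{n_k}\le u]-\Phi(u)|\to0$, a contradiction.

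For the moment convergence I would use that the cumulant generating function of $L_n$ under $\prob_{n,h,\omega}$ is $\lambda\mapsto\log Z_{n,h+\lambda}(\omega)-\log Z_{n,h}(\omega)$, so that the cumulants of the centered contact number are $\kappa_r:=\partial_h^r\log Z_{n,h}(\omega)$ for $r\ge2$ (cumulants of order $\ge2$ being shift invariant), and expand the central moment by the partition formula
\begin{equation*}
\Ex_{n,h,\omega}\big[(L_n-\Ex_{n,h,\omega}[L_n])^m\big]=\sum_{\pi}\prod_{B\in\pi}\kappa_{|B|},
\end{equation*}
the sum running over set partitions $\pi$ of $\{1,\dots,m\}$ with no singleton block. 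Theorem~\ref{th:Cinfty} applied with $r=2$ and with $r=j$ for each fixed $j\ge3$ gives, uniformly on $H$ and for $\pae$, $\kappa_2=n\,v(h)(1+o(1))$ and $\kappa_j=n\,\partial_h^jf(h)+o(n)=O(n)$. Hence a partition with $q$ blocks contributes $O(n^q)$; since every block has size $\ge2$ we get $q\le m/2$, with equality only for the $(m-1)!!$ pairings (present only when $m$ is even, each contributing $\kappa_2^{m/2}=(n\,v(h))^{m/2}(1+o(1))$) and $q<m/2$ for every other admissible partition, so that all those contribute $o(n^{m/2})$. Therefore $\Ex_{n,h,\omega}[(L_n-\Ex_{n,h,\omega}[L_n])^m]$ equals $(m-1)!!\,(n\,v(h))^{m/2}(1+o(1))$ for $m$ even and $o(n^{m/2})$ for $m$ odd, uniformly on $H$; dividing by $(n\,v(h))^{m/2}$ yields $\Ex_{n,h,\omega}[Y_n^m]\to\mu_m$ uniformly on $H$, which is what the reduction requires.

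Since all the analytic substance sits inside Theorem~\ref{th:Cinfty}, which we may assume, there is no deep obstacle left and the rest of the argument is soft; the one point genuinely worth flagging is that the method of moments needs \emph{all} moments, i.e.\ one really invokes Theorem~\ref{th:Cinfty} for every $r\in\N_0$, and if only finitely many derivatives of $\frac1n\log Z_{n,h}$ were controlled this route would break down. That is also why I would avoid the characteristic-function route here: via the identity $\Ex_{n,h,\omega}[e^{i\lambda L_n}]=Z_{n,h+i\lambda}(\omega)/Z_{n,h}(\omega)$ and a Taylor expansion of $\log Z_{n,h}(\omega)$ in $h$, its leading term is again $-\tfrac{\lambda^2}{2}\,\partial_h^2\log Z_{n,h}(\omega)$, controlled by Theorem~\ref{th:Cinfty} with $r=2$, but bounding the remainder at the relevant scale $\lambda\sim n^{-1/2}$ would require analytic-type control of $\log Z_{n,h}(\omega)$ in a complex neighbourhood of $H$ of size uniform in $n$, which the Gevrey-$3$ bound does not supply, whereas the moment method needs only one finite-order derivative estimate at a time. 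The remaining care is purely bookkeeping: making sure the $o(\cdot)$ errors in the partition formula are uniform in $h\in H$ (they are, because the convergences in Theorem~\ref{th:Cinfty} are uniform on compacts), which is exactly what feeds the subsequence argument turning fixed-$h$ weak convergence into the doubly-uniform statement in $h$ and in $u$.
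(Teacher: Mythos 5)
The paper does not prove Theorem~\ref{th:CLT} here: it is imported verbatim from the companion paper, cited as part~(ii) of Theorem~1.4 of \cite{cf:companionpaper}, with only the remark that it is implied by the derivative asymptotics recorded as Theorem~\ref{th:Cinfty}. Your argument is a correct, self-contained derivation of precisely that implication, and every step checks out. The identification $\Ex_{n,h,\omega}[\ee^{\lambda L_n}]=Z_{n,h+\lambda}(\omega)/Z_{n,h}(\omega)$ makes the $r$-th cumulant of $L_n$ equal to $\partial_h^r\log Z_{n,h}(\omega)$; since $L_n$ is bounded there is no domain issue. Theorem~\ref{th:Cinfty} then gives $\kappa_2=n\,v(h)(1+o(1))$ and $\kappa_j=O(n)$ uniformly on compacts $H$ (using continuity of $\partial_h^j f$ and positivity of $v$ on $H$), so in the no-singleton partition expansion the pairings dominate and all other admissible partitions contribute $o(n^{m/2})$; dividing by $(n\,v(h))^{m/2}$, which is bounded above and below by multiples of $n^{m/2}$ uniformly on $H$, gives the uniform moment convergence. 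Determinacy of the Gaussian plus P\'olya's theorem, packaged via your subsequence argument, turns fixed-$h$ weak convergence into the doubly-uniform conclusion. Two small remarks. First, for a given $m$ you invoke Theorem~\ref{th:Cinfty} only for $r\le m$, i.e.\ finitely many orders at a time, but the full-probability event in that theorem covers all $r$ simultaneously, so there is no issue with the countable intersection. Second, the aside dismissing the characteristic-function route is somewhat overcautious: once uniform convergence of all moments is established, a Taylor expansion of $\Ex_{n,h,\omega}[\ee^{\mathrm{i}\lambda Y_n}]$ with Lagrange remainder controlled by $\Ex_{n,h,\omega}[|Y_n|^{k+1}]$ (which you have bounded) closes that argument as well without any complex-analytic input; still, the moment method as you wrote it is perfectly adequate and arguably cleaner.
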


\medskip

\subsection{The local CLT for the contact number}
\label{sec:localCLT}

The proof of the quenched local CLT for the contact number requires
that we investigate the characteristic function of $L_n$ with respect
to the polymer measure $\prob_{n,h,\omega}$.  Put for brevity
$\#_n:=\lfloor(n-1)/2\rfloor$ and
 \begin{equation*}
    J_{n,h,\omega}:=\sum_{k=1}^{\#_n}\frac{2\ee^{h+\omega_{2k}}p(1)^2p(2)}{[p(2)+\ee^{h+\omega_{2k}}p(1)^2]^2}X_{2k-1}X_{2k+1}\,.
  \end{equation*}
 The following lemma, which is partly inspired by \cite[Lemmas 2.2 and
   2.3]{campanino1979}, deals with the mentioned characteristic
 function and is the first step towards Theorem \ref{th:LCLT}.
 
 \medskip
 
 \begin{lemma}
   \label{lem:charac}
   For every integer $n\ge 3$, $h\in\Rl$, $\omega\in\Omega$, and
   $z\in[-\pi,\pi]$
 \begin{equation*}
   \Big|\Ex_{n,h,\omega}\big[\ee^{\mathrm{i}z L_n}\big]\Big|\le\Ex_{n,h,\omega}\Big[\ee^{-\frac{z^2}{\pi^2}J_{n,h,\omega}}\Big]\le
   \ee^{-\frac{z^2}{2\pi^2}\Ex_{n,h,\omega}[J_{n,h,\omega}]}+4\frac{\var_{n,h,\omega}[J_{n,h,\omega}]}{\Ex_{n,h,\omega}[J_{n,h,\omega}]^2}\,.
\end{equation*}
 \end{lemma}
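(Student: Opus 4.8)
The plan is to establish the two inequalities separately. The second one is soft: writing $J:=J_{n,h,\omega}$ — which has strictly positive mean, since $p>0$ everywhere forces $\prob_{n,h,\omega}[X_1=X_3=1]>0$ when $n\ge3$ — I would split $\Ex_{n,h,\omega}[\ee^{-z^2 J/\pi^2}]$ over the event $\{J\ge\frac12\Ex_{n,h,\omega}[J]\}$, on which $\ee^{-z^2 J/\pi^2}\le\ee^{-z^2\Ex_{n,h,\omega}[J]/(2\pi^2)}$, and its complement, on which $\ee^{-z^2 J/\pi^2}\le1$ and Chebyshev's inequality bounds the probability by $4\var_{n,h,\omega}[J]/\Ex_{n,h,\omega}[J]^2$; adding the two contributions yields the stated bound.

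For the first inequality $|\Ex_{n,h,\omega}[\ee^{\mathrm{i}z L_n}]|\le\Ex_{n,h,\omega}[\ee^{-z^2 J_{n,h,\omega}/\pi^2}]$ I would condition on the $\sigma$-algebra $\mathcal{H}$ generated by the odd-indexed indicators $X_1,X_3,\dots$. Splitting $L_n$ into the ($\mathcal{H}$-measurable) sum of the odd-indexed $X_a$ and the sum of the even-indexed ones, the first piece contributes a unimodular factor to $\Ex_{n,h,\omega}[\ee^{\mathrm{i}z L_n}\mid\mathcal{H}]$. For the second piece, the factorization \eqref{eq:ci-fact2}, applied at the odd contacts together with the sites $0$ and $n$, shows that conditionally on $\mathcal{H}$ the configuration decomposes into independent stretches running between consecutive odd contacts; for a stretch of length $2$, that is, for an even site $2k$ with $X_{2k-1}=X_{2k+1}=1$, the variable $X_{2k}$ is an independent Bernoulli variable with parameter $q_k:=p(1)^2\ee^{h+\omega_{2k}}/(p(2)+p(1)^2\ee^{h+\omega_{2k}})=\prob_{2,h,\vartheta^{2k-1}\omega}[X_1=1]$, obtained by comparing the weights $p(1)^2\ee^{h+\omega_{2k}}$ (two unit steps, with a contact reward at $2k$) and $p(2)$ (one step of size two) of the two length-$2$ renewal bridges between $2k-1$ and $2k+1$. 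Using the identity $|\Ex_{n,h,\omega}[\ee^{\mathrm{i}z X_{2k}}\mid\mathcal{H}]|^2=1-2q_k(1-q_k)(1-\cos z)\le\ee^{-2q_k(1-q_k)(1-\cos z)}$, the elementary bound $1-\cos z\ge2z^2/\pi^2$ valid on $[-\pi,\pi]$, and the trivial estimate $\le1$ for the remaining stretches, multiplying the conditional characteristic functions over all stretches — and noting that $q_k(1-q_k)$ equals half the $k$-th coefficient of $J_{n,h,\omega}$ while $J_{n,h,\omega}$ is $\mathcal{H}$-measurable — gives $|\Ex_{n,h,\omega}[\ee^{\mathrm{i}z L_n}\mid\mathcal{H}]|\le\ee^{-z^2 J_{n,h,\omega}/\pi^2}$; since the modulus of an expectation is at most the expectation of the modulus, taking $\Ex_{n,h,\omega}$ concludes the step.

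The main obstacle is the bookkeeping in this conditioning step. One has to check that conditioning on $\mathcal{H}$ still produces the product decomposition even though it fixes the odd non-contacts to the value $0$, for which \eqref{eq:ci-fact2} does not apply directly: this is legitimate because the factorization is invoked only at the odd contacts, while the further constraints ``the odd sites interior to a given stretch are non-contacts'' condition each already-independent block separately. One also has to verify that the even sites $2k$ with $X_{2k-1}=X_{2k+1}=1$ are exactly those bounding length-$2$ stretches and are precisely indexed by $k\in\{1,\dots,\#_n\}$, with the deterministic contact at $n$ causing no boundary mismatch, so that $\sum_k q_k(1-q_k)X_{2k-1}X_{2k+1}$ reconstitutes exactly $\frac12 J_{n,h,\omega}$.
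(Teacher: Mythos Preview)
Your proposal is correct and follows essentially the same approach as the paper: the second inequality via the Chebyshev split on $\{J\ge\frac12\Ex[J]\}$ is identical, and the first inequality is obtained in the paper exactly as you describe, by conditioning on the $\sigma$-algebra generated by the odd-indexed indicators, factorizing via \eqref{eq:ci-fact2} over the stretches between consecutive odd contacts, and bounding only the length-$2$ stretches where $X_{2k}$ is conditionally Bernoulli with parameter $q_k$. The paper writes the single-site bound as $\sqrt{1-4z^2 q_k(1-q_k)/\pi^2}\le\exp\{-2z^2 q_k(1-q_k)/\pi^2\}$ using $\cos z\le1-2z^2/\pi^2$ directly, which is your computation in slightly different notation; your bookkeeping remarks about the conditioning and the boundary at $n$ are handled in the paper by explicitly enumerating the odd contacts $2l_1+1<\cdots<2l_r+1$.
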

 
 \medskip

\begin{proof}[Proof of Lemma \ref{lem:charac}]
Fix an integer $n\ge 3$, $h\in\Rl$, $\omega\in\Omega$, and
$z\in[-\pi,\pi]$. The second inequality is due to the first and to
Chebyshev's inequality, which entail
\begin{align}
  \nonumber
  \Ex_{n,h,\omega}\Big[\ee^{-\frac{z^2}{\pi^2}J_{n,h,\omega}}\Big]&\le \ee^{-\frac{z^2}{2\pi^2}\Ex_{n,h,\omega}[J_{n,h,\omega}]}+
  \prob_{n,h,\omega}\bigg[J_{n,h,\omega}\le \frac{1}{2}\Ex_{n,h,\omega}[J_{n,h,\omega}]\bigg]\\
  \nonumber
  &\le \ee^{-\frac{z^2}{2\pi^2}\Ex_{n,h,\omega}[J_{n,h,\omega}]}+4\frac{\var_{n,h,\omega}[J_{n,h,\omega}]}{\Ex_{n,h,\omega}[J_{n,h,\omega}]^2}\,.
\end{align}

\smallskip

Regarding the first inequality, denoting by $\mathfrak{O}$ the
$\sigma$-algebra generated by the odd binary variables
$X_1,X_3,\ldots$, we can state that
\begin{equation*}
  \Big|\Ex_{n,h,\omega}\big[\ee^{\mathrm{i}z L_n}\big]\Big|\le
  \Ex_{n,h,\omega}\bigg[\bigg|\Ex_{n,h,\omega}\Big[\ee^{\mathrm{i}z\sum_{k=1}^{\#_n}X_{2k}}\Big|\mathfrak{O}\Big]\bigg|\bigg]\,.
\end{equation*}
Therefore, it suffices to show that
\begin{equation}
  \label{charac_1}
  \bigg|\Ex_{n,h,\omega}\Big[\ee^{\mathrm{i}z\sum_{k=1}^{\#_n}X_{2k}}\Big|\mathfrak{O}\Big]\bigg|\le \ee^{-\frac{z^2}{\pi^2}J_{n,h,\omega}}\,.
\end{equation}
This bound is trivial if at most one variable among
$X_1,X_3,\ldots,X_{2\#_n+1}$ takes value 1 since $J_{n,h,\omega}=0$ in
this case. Then, suppose that at least two of these variables take
value 1 and let $ 2l_1+1<\cdots<2l_r+1$ with $r\ge 2$ be the sites
where the values 1 are attained. Expliciting $J_{n,h,\omega}$ we see
that \eqref{charac_1} is tantamount to the following bound to be
proved:
\begin{align}
   \nonumber
  &\bigg|\Ex_{n,h,\omega}\Big[\ee^{\mathrm{i}z\sum_{k=1}^{\#_n}X_{2k}}\Big|\mathfrak{O}\Big]\bigg|\\
   &\qquad\le\prod_{s=2}^r\Bigg(\mathds{1}_{\{l_s\ne l_{s-1}+1\}}+\mathds{1}_{\{l_s=l_{s-1}+1\}}
   \exp\bigg\{-\frac{2z^2\ee^{h+\omega_{2l_s}}p(1)^2p(2)}{\pi^2[p(2)+\ee^{h+\omega_{2l_s}}p(1)^2]^2}\bigg\}\Bigg)\,.
  \label{charac_2}
\end{align}

Let us verify \eqref{charac_2}.  Because of \eqref{eq:ci-fact2},
conditioning to the odd binary variables makes the sums
$\sum_{k=1}^{l_1}X_{2k},\sum_{k=l_1+1}^{l_2}X_{2k},\ldots,\sum_{k=l_{r-1}+1}^{l_r}X_{2k},\sum_{k=l_r+1}^{\#_n}X_{2k}$
independent, so that
\begin{align}
  \nonumber
  \bigg|\Ex_{n,h,\omega}\Big[\ee^{\mathrm{i}z\sum_{k=1}^{\#_n}X_{2k}}\Big|\mathfrak{O}\Big]\bigg|
  &= \bigg|\Ex_{n,h,\omega}\Big[\ee^{\mathrm{i}z\sum_{k=1}^{l_1}X_{2k}}\Big|\mathfrak{O}\Big]\bigg|\\
  \nonumber
  &\quad\times\prod_{s=2}^r\bigg|\Ex_{n,h,\omega}\Big[\ee^{\mathrm{i}z\sum_{k=l_{s-1}+1}^{l_s}X_{2k}}\Big|\mathfrak{O}\Big]\bigg|\\
  \nonumber
  &\quad\quad\times\bigg|\Ex_{n,h,\omega}\Big[\ee^{\mathrm{i}z\sum_{k=l_r+1}^{\#_n}X_{2k}}\Big|\mathfrak{O}\Big]\bigg|\\
  &\le \prod_{s=2}^r\Bigg\{\mathds{1}_{\{l_s\ne l_{s-1}+1\}}+\mathds{1}_{\{l_s=l_{s-1}+1\}}\bigg|\Ex_{n,h,\omega}\Big[\ee^{\mathrm{i}z X_{2l_s}}\Big|\mathfrak{O}\Big]\bigg|\Bigg\}\,.
\label{charac_3}
\end{align}
Under the hypothesis $l_s=l_{s-1}+1$ two applications of
\eqref{eq:ci-fact1} yield
\begin{equation*}
  \Ex_{n,h,\omega}\Big[\ee^{\mathrm{i}z X_{2l_s}}\Big|\mathfrak{O}\Big]=
  \Ex_{2,h,\vartheta^{2l_s-1}\omega}\big[\ee^{\mathrm{i}z X_1}\big]=\frac{p(2)+\ee^{\mathrm{i}z+h+\omega_{2l_s}}p(1)^2}{p(2)+\ee^{h+\omega_{2l_s}}p(1)^2}\,,
\end{equation*}
and the bounds $\cos z\le 1-\frac{2z^2}{\pi^2}$, as $z\in[-\pi,\pi]$,
and $\sqrt{1-\zeta}\le \ee^{-\frac{\zeta}{2}}$ for $\zeta\in[0,1]$
give
\begin{align}
  \nonumber
  \bigg|\Ex_{n,h,\omega}\Big[\ee^{\mathrm{i}z X_{2l_s}}\Big|\mathfrak{O}\Big]\bigg|&=\frac{\sqrt{p(2)^2+\ee^{2h+2\omega_{2l_s}}p(1)^4
      +2\ee^{h+\omega_{2l_s}}p(1)^2p(2)\cos z}}{p(2)+\ee^{h+\omega_{2l_s}}p(1)^2}\\
  \nonumber
  &\le\sqrt{1-\frac{4z^2\ee^{h+\omega_{2l_s}}p(1)^2p(2)}{\pi^2[p(2)+\ee^{h+\omega_{2l_s}}p(1)^2]^2}}\\
  &\le \exp\left\{-\frac{2z^2\ee^{h+\omega_{2l_s}}p(1)^2p(2)}{\pi^2[p(2)+\ee^{h+\omega_{2l_s}}p(1)^2]^2}\right\}\,.
  \label{charac_4}
\end{align}
We find (\ref{charac_2}) by combining (\ref{charac_3}) with
(\ref{charac_4}).
\end{proof}

\medskip

The next step is to investigate the mean and the variance of
$J_{n,h,\omega}$.

\medskip

\begin{lemma}
  \label{lem:bounds_J}
  The following properties hold for $\pae$:
  \vspace{0.1cm}
  \begin{enumerate}[itemsep=0.3em,label=({\roman*})]
    \item $\displaystyle{\liminf_{n\uparrow\infty}\inf_{h\in H}\frac{1}{n}\Ex_{n,h,\omega}[J_{n,h,\omega}]>0}$ for all $H\subset(h_c,+\infty)$ compact;
    \item $\displaystyle{\limsup_{n\uparrow\infty}\sup_{h\in H}\frac{1}{n}\var_{n,h,\omega}[J_{n,h,\omega}]<+\infty}$
      for all $H\subset(h_c,+\infty)$ compact.
  \end{enumerate}
\end{lemma}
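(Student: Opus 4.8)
The plan is to treat the two items by different means. Write
\[
\Ex_{n,h,\omega}[J_{n,h,\omega}]=\sum_{k=1}^{\#_n}c_{k,h,\omega}\,\Ex_{n,h,\omega}[X_{2k-1}X_{2k+1}],\qquad c_{k,h,\omega}:=\frac{2\ee^{h+\omega_{2k}}p(1)^2p(2)}{[p(2)+\ee^{h+\omega_{2k}}p(1)^2]^2},
\]
and observe that $c_{k,h,\omega}=2\var_{2,h,\vartheta^{2k-1}\omega}[X_1]\in(0,\tfrac12]$, so that $c_{k,h,\omega}\ge c_\star(H)>0$ whenever $h\in H$ and $|\omega_{2k}|\le A$, for a suitable $A=A(H)$. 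For part~$(ii)$ I would proceed by decorrelation: for $1\le k<k'\le\#_n$, Lemma~\ref{lem:corr} applied with $a=2k+1$, $b=2k'-1$, $\phi=X_{2k-1}X_{2k+1}$ and $\psi=X_{2k'-1}X_{2k'+1}$, together with the trivial diagonal bound $\var_{n,h,\omega}[X_{2k-1}X_{2k+1}]\le1$ and $c_{k,h,\omega}c_{k',h,\omega}\le\tfrac14$, gives after summing over all $k,k'$ and regrouping by $m:=j-i$ (each pair $(i,m)$ arising from $O(m^2)$ admissible $(k,k')$, which force $m\ge 2(k'-k)$)
\[
\sup_{h\in H}\var_{n,h,\omega}[J_{n,h,\omega}]\ \le\ \frac{n}{8}+C\sum_{i=0}^{n}W_i(\omega),\qquad W_i(\omega):=\sum_{m\ge1}m^2\,\sup_{h\in H}\Ex_{m,h,\vartheta^i\omega}^{\otimes 2}\Big[\prod_{l=1}^{m-1}(1-X_lX_l')\Big],
\]
for an absolute constant $C$. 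Now $\{W_i\}_{i\ge0}$ is stationary under $\probd$ and integrable, since $\Exd[W_0]\le\sum_{m\ge1}m^2Ge^{-\gamma m}<\infty$ by Lemma~\ref{lem:decay}; ergodicity of the shift on an i.i.d.\ sequence and Birkhoff's theorem then yield $\tfrac1n\sum_{i=0}^nW_i(\omega)\to\Exd[W_0]$ for $\pae$, hence $\sup_{h\in H}\var_{n,h,\omega}[J_{n,h,\omega}]=O(n)$, which is $(ii)$. Uniformity in $h$ is free because the supremum is carried inside $W_i$.

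For part~$(i)$ the plan is to restrict the sum over $k$ to the ``good'' set $\mathcal G_n:=\{\,k:\ \lceil\delta n\rceil\le 2k\le n-\lceil\delta n\rceil,\ |\omega_{2k}|\le A,\ |\omega_{2k+1}|\le A\,\}$, which by the strong law of large numbers has $|\mathcal G_n|/\#_n\to(1-2\delta)\probd[|\omega_0|\le A]^2>0$ for $\pae$, and on which $c_{k,h,\omega}\ge c_\star(H)>0$. It then suffices to prove $\sum_{k\in\mathcal G_n}\Ex_{n,h,\omega}[X_{2k-1}X_{2k+1}]\ge\epsilon\,n$ for large $n$, uniformly in $h\in H$ and $\pae$. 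Using the conditional independence identities~\eqref{eq:ci-fact1}--\eqref{eq:ci-fact2} I would write, for $k\in\mathcal G_n$,
\[
\Ex_{n,h,\omega}[X_{2k-1}X_{2k+1}]\ \ge\ \Ex_{n,h,\omega}[X_{2k-1}X_{2k}X_{2k+1}]\ =\ \Ex_{n,h,\omega}[X_{2k}]\,\Ex_{2k,h,\omega}[X_{2k-1}]\,\Ex_{n-2k,h,\vartheta^{2k}\omega}[X_1],
\]
thus reducing the two-point function to the bulk one-point function $\Ex_{n,h,\omega}[X_{2k}]$ times two \emph{endpoint} one-point functions, each (up to reversing the renewal, which is a symmetry) of the form $p(1)\ee^{h+\cdot}Z_{m-1,h}(\cdot)/Z_{m,h}(\cdot)$ with a bounded charge at the relevant endpoint and $m$ of order $n$ (here the cut $\lceil\delta n\rceil\le 2k\le n-\lceil\delta n\rceil$ keeps both lengths of order $n$). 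For such an endpoint one-point function I would prove a lower bound $\ge c'(H)>0$: via the renewal recursion $Z_{m,h}(\omega)=\sum_{t\ge1}p(t)\ee^{h+\omega_t}Z_{m-t,h}(\vartheta^t\omega)$ this reduces to showing $Z_{m,h}(\omega)\le C(H)\,Z_{m-1,h}(\vartheta\omega)$ uniformly, which I would extract from the sharp form of the convergence $\tfrac1m\log Z_{m,h}\to f(h)$ in the localized phase (Theorem~\ref{th:Cinfty}) together with the concentration inequality of Theorem~\ref{th:concentration}, used to make the partition-function estimates uniform over the shifts and lengths that appear. For the bulk factor I would use that the contacts carry a positive density, $\sum_{a=1}^n\Ex_{n,h,\omega}[X_a]=\Ex_{n,h,\omega}[L_n]\ge(\min_H\rho)\,n$ for large $n$ by Theorem~\ref{th:Cinfty}, that the sites with $|\omega_a|>A$ carry only a fraction $O(\probd[|\omega_0|>A])$ of this mass, and that a one-step displacement of an even contact with both odd neighbours empty is feasible at multiplicative cost at most $\ee^{2A}\sup_t\max\{p(t)/p(t-1),p(t)/p(t+1)\}<\infty$ (the supremum being finite because these ratios tend to $1$), which transfers the even-contact mass onto the odd one up to a constant; choosing $A$ large then gives $\sum_{k\in\mathcal G_n}\Ex_{n,h,\omega}[X_{2k}]\ge\epsilon'\,n$, and $(i)$ follows.

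I expect $(ii)$ to be routine and $(i)$ to carry the real difficulty, concentrated in the uniform lower bound on the endpoint one-point functions — equivalently in $Z_{m,h}(\omega)\le C\,Z_{m-1,h}(\vartheta\omega)$ uniform in $m$. A naive use of the renewal recursion compares $Z_{m-t,h}(\vartheta^t\omega)$ with $Z_{m-1,h}(\vartheta\omega)$ only through a single jump and thereby loses a factor $\log m$ (roughly $\log m$ first jumps are individually comparable with the unit one), which would yield only $\Ex_{n,h,\omega}[J_{n,h,\omega}]\gtrsim n/(\log n)^2$; since the quenched local CLT needs the \emph{sharp} linear growth $\Ex_{n,h,\omega}[J_{n,h,\omega}]\asymp n$ — a slower growth ruins the control of $\int_{-\pi}^\pi|\Ex_{n,h,\omega}[\ee^{\mathrm{i}zL_n}]|\,\dd z$ near scale $1/\sqrt n$ in Lemma~\ref{lem:charac} — this logarithm must be removed, and that is exactly where the refined control of the localized phase from~\cite{cf:companionpaper} (sharp estimates on partition-function ratios, i.e.\ on the renewal/Green's function) has to be invoked. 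The parity-balancing step and the ergodic-theorem argument for $(ii)$ are, by comparison, straightforward.
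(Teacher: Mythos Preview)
Your treatment of part~$(ii)$ is correct and essentially identical to the paper's: Lemma~\ref{lem:corr} for off-diagonal covariances, Lemma~\ref{lem:decay} for integrability of the ergodic summand, and Birkhoff's theorem.

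For part~$(i)$ you have the right factorization but you dramatically overcomplicate the endpoint estimate, and this is not merely a matter of taste: the route you outline (bounding $Z_{m,h}(\omega)\le C\,Z_{m-1,h}(\vartheta\omega)$ via sharp convergence of $\tfrac1m\log Z_{m,h}$ plus concentration) does not work as written, because Theorem~\ref{th:Cinfty} and Theorem~\ref{th:concentration} control $\log Z_{m,h}$ only up to $O(\sqrt m)$, far too coarse for an $O(1)$ ratio. The paper avoids this entirely. The key observation you are missing is that the endpoint factors are one-point functions at \emph{bounded distance from the boundary}, and these admit an elementary deterministic lower bound: from the regular-variation estimate $p(t+\tau)\le\xi\,p(t)p(\tau)\min\{t^\xi,\tau^\xi\}$ one gets, by an ``insert a contact at $a$'' argument,
\[
\Ex_{m,h,\omega'}[X_a]\ \ge\ \frac{1}{1+\xi\,\ee^{-h-\omega'_a}\min\{a^\xi,(m-a)^\xi\}}\,,
\]
which for $a=1$ (or $a=m-1,m-2$) reads $\Ex_{m,h,\omega'}[X_1]\ge 1/(1+\xi\ee^{-h-\omega'_1})$, a bound depending only on $h$ and the single charge $\omega'_1$. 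This \emph{is} the ratio bound you were after (it says exactly $Z_{m,h}(\omega')\le(p(1)\ee^{h+\omega'_1}+\xi p(1))Z_{m-1,h}(\vartheta\omega')$), obtained with no logarithmic loss and no appeal to concentration or to sharp asymptotics. With this in hand, both endpoint factors are bounded below by quantities depending only on $h$ and on the local charges $\omega_{2k-1},\omega_{2k+1}$, and the rest of your argument (ergodic control of the good set, positivity of the contact density via Theorem~\ref{th:Cinfty}) goes through.

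Two smaller remarks. First, the paper conditions on $X_{2k+1}$ rather than $X_{2k}$, obtaining a lower bound containing $\Ex_{n,h,\omega}[X_{2k}]+\Ex_{n,h,\omega}[X_{2k+1}]$; summing over $k$ then directly recovers $\Ex_{n,h,\omega}[L_n]$ and sidesteps the parity-balancing manoeuvre you sketch. Second, your good set should also constrain $\omega_{2k-1}$ (the charge entering $\Ex_{2k,h,\omega}[X_{2k-1}]$), not only $\omega_{2k}$ and $\omega_{2k+1}$.
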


\medskip

\begin{proof}[Proof of Lemma \ref{lem:bounds_J}]
The properties of slowly varying functions (see \cite[Theorem 1.2.1
   and Proposition 1.3.6]{bingham1989}) imply that there exists a
 constant $\xi>0$ such that for all $t,\tau\in\N$
\begin{equation}
  \frac{p(t+\tau)}{p(t)p(\tau)}\le\xi \min\{t^\xi,\tau^\xi\}\,.
  \label{eq:xi_def}
\end{equation}
The constant $\xi$ enters a technical lower bound on the probability
that there is a contact at a given site. In fact, for all $h\in\Rl$,
$\omega:=\{\omega_a\}_{a\in\N_0}\in\Omega$, and integers $0\le a\le n$
we have (see \cite[Lemma 2.2 and Remark 2.3]{cf:companionpaper})
\begin{equation}
  \Ex_{n,h,\omega}[X_a]\ge\frac{1}{1+\xi\ee^{-h-\omega_a}\min\{a^\xi,(n-a)^\xi\}}\,.
\label{eq:lower_bound_pcontact}
\end{equation}
Such lower bound is at the basis of the following estimate
that we need to prove the lemma: for every integer $n\ge 3$,
$h\in\Rl$, $\omega:=\{\omega_a\}_{a\in\N_0}\in\Omega$, and
$k\in\{1,\ldots,\#_n\}$ with $\#_n:=\lfloor(n-1)/2\rfloor$
\begin{align}
  \nonumber
  &\Ex_{n,h,\omega}[X_{2k-1}X_{2k+1}]\\
  &\qquad\ge \frac{\ee^{-2|h|}}{2}\frac{1}{1+\xi 2^\xi \ee^{-\omega_{2k-1}}}\frac{1}{1+\xi\ee^{-\omega_{2k+1}}}\Big(\Ex_{n,h,\omega}[X_{2k}]+\Ex_{n,h,\omega}[X_{2k+1}]\Big)\,.
  \label{lower_bound_XX}
\end{align}

In order to prove \eqref{lower_bound_XX} we appeal to
\eqref{eq:ci-fact1} to state that for a given integer $n\ge 3$,
$h\in\Rl$, $\omega:=\{\omega_a\}_{a\in\N_0}\in\Omega$, and
$k\in\{1,\ldots,\#_n\}$
\begin{align}
  \nonumber
  \Ex_{n,h,\omega}[X_{2k-1}X_{2k+1}]&=\Ex_{n,h,\omega}[X_{2k-1}|X_{2k+1}=1]\Ex_{n,h,\omega}[X_{2k+1}]\\
  \nonumber
  &=\Ex_{2k+1,h,\omega}[X_{2k-1}]\Ex_{n,h,\omega}[X_{2k+1}]
\end{align}
and
\begin{align}
  \nonumber
  \Ex_{n,h,\omega}[X_{2k+1}]\ge \Ex_{n,h,\omega}[X_{2k}X_{2k+1}]&=\Ex_{n,h,\omega}[X_{2k+1}|X_{2k}=1]\Ex_{n,h,\omega}[X_{2k}]\\
  \nonumber
  &=\Ex_{n-2k,h,\vartheta^{2k}\omega}[X_1]\Ex_{n,h,\omega}[X_{2k}]\,,
\end{align}
so that
\begin{align}
  \nonumber
  \Ex_{n,h,\omega}[X_{2k-1}X_{2k+1}]&\ge \frac{1}{2}\Ex_{2k+1,h,\omega}[X_{2k-1}]\Ex_{n,h,\omega}[X_{2k+1}]\\
  \nonumber
  &\quad +\frac{1}{2}\Ex_{2k+1,h,\omega}[X_{2k-1}]\Ex_{n-2k,h,\vartheta^{2k}\omega}[X_1]\Ex_{n,h,\omega}[X_{2k}]\\
  \nonumber
  &\ge \frac{1}{2}\Ex_{2k+1,h,\omega}[X_{2k-1}]\Ex_{n-2k,h,\vartheta^{2k}\omega}[X_1]\Big(\Ex_{n,h,\omega}[X_{2k}]+\Ex_{n,h,\omega}[X_{2k+1}]\Big)\,.
\end{align}
This gives \eqref{lower_bound_XX} because
\eqref{eq:lower_bound_pcontact} shows that
 \begin{equation*}
 \Ex_{2k+1,h,\omega}[X_{2k-1}]\ge \frac{1}{1+\xi 2^\xi \ee^{-h-\omega_{2k-1}}}\ge \frac{\ee^{-|h|}}{1+\xi 2^\xi \ee^{-\omega_{2k-1}}}
\end{equation*}
and
\begin{equation*}
  \Ex_{n-2k,h,\vartheta^{2k}\omega}[X_1]\ge \frac{1}{1+\xi\ee^{-h-\omega_{2k+1}}}\ge \frac{\ee^{-|h|}}{1+\xi\ee^{-\omega_{2k+1}}}\,.
\end{equation*}

\smallskip

\noindent\textit{Part (i).}  Let $\Lambda$ be the random variable that
maps $\omega:=\{\omega_a\}_{a\in\N_0}\in\Omega$ to
\begin{equation*}
\Lambda(\omega):=\frac{\ee^{\omega_1}p(1)^2p(2)}{[p(2)+\ee^{\omega_1}p(1)^2]^2}\frac{1}{1+\xi 2^\xi \ee^{-\omega_0}}\frac{1}{1+\xi\ee^{-\omega_2}}\,,
\end{equation*}
$\xi$ being the constant introduced in \eqref{eq:xi_def}.  Note the
$\Lambda$ is independent of the parameter $h$. Bounds
\eqref{lower_bound_XX} gives for every integer $n\ge 3$, $h\in\Rl$,
and $\omega:=\{\omega_a\}_{a\in\N_0}\in\Omega$
\begin{align}
  \nonumber
  \Ex_{n,h,\omega}[J_{n,h,\omega}]&=\sum_{k=1}^{\#_n}\frac{2\ee^{h+\omega_{2k}}p(1)^2p(2)}{[p(2)+\ee^{h+\omega_{2k}}p(1)^2]^2}\,\Ex_{n,h,\omega}[X_{2k-1}X_{2k+1}]\\
  \nonumber
  &\ge\ee^{-3|h|}\sum_{k=1}^{\#_n}\frac{2\ee^{\omega_{2k}}p(1)^2p(2)}{[p(2)+\ee^{\omega_{2k}}p(1)^2]^2}\,\Ex_{n,h,\omega}[X_{2k-1}X_{2k+1}]\\
  &\ge \ee^{-5|h|}\sum_{k=1}^{\#_n}\Lambda(\vartheta^{2k-1}\omega)\Big(\Ex_{n,h,\omega}[X_{2k}]+\Ex_{n,h,\omega}[X_{2k+1}]\Big)\,.
  \label{eq:bounds_J_0}
\end{align}
This inequality allows us to demonstrate part $(i)$ of the lemma as
follows. According to Theorem \ref{th:Cinfty} and Birkhoff's ergodic
theorem there exists a set $\Omega_o\in\mathcal{F}$ with
$\probd[\Omega_o]=1$ such that the following properties hold for all
$\omega\in\Omega_o$:
\vspace{0.1cm}
  \begin{enumerate}[itemsep=0.3em]
    \item $\displaystyle{\adjustlimits\lim_{n\uparrow\infty}\sup_{h\in H}\bigg|\Ex_{n,h,\omega}\bigg[\frac{L_n}{n}\bigg]-\rho(h)\bigg|=0}$ for every compact set $H\subset(h_c,+\infty)$;
    \item $\displaystyle{\lim_{n\uparrow\infty}\frac{1}{n}\sum_{i=0}^{n-1}\mathds{1}_{\{\Lambda(\vartheta^i\omega)\le 1/s\}}=\probd[\Lambda\le 1/s]}$ for every $s\in\N$.
  \end{enumerate}
Fix $\omega\in\Omega_o$ and $H\subset(h_c,+\infty)$ compact and put
$h_o:=\sup\{|h|:h\in H\}<+\infty$. Note that $\inf_{h\in H}\rho(h)>0$,
as the function $\rho:=\partial_hf$ is continuous and strictly
positive on $(h_c,+\infty)$, and that
$\lim_{s\uparrow\infty}\probd[\Lambda\le 1/s]=0$ by 
monotone continuity of probabilities. Then, choose $s\in\N$ so large that
$\probd[\Lambda\le 1/s]\le (1/3)\inf_{h\in H}\rho(h)$.  Bound
\eqref{eq:bounds_J_0} entails that for every integer $n\ge 3$ and
$h\in H$
\begin{align}
  \nonumber
  \Ex_{n,h,\omega}[J_{n,h,\omega}]&\ge \frac{\ee^{-5h_o}}{s}\sum_{k=1}^{\#_n}\mathds{1}_{\{\Lambda(\vartheta^{2k-1}\omega)>1/s\}}
  \Big(\Ex_{n,h,\omega}[X_{2k}]+\Ex_{n,h,\omega}[X_{2k+1}]\Big)\\
  \nonumber
  &\ge\frac{\ee^{-5h_o}}{s}\sum_{k=1}^{\#_n}\Big(\Ex_{n,h,\omega}[X_{2k}]+\Ex_{n,h,\omega}[X_{2k+1}]\Big)
  -\frac{2\ee^{-5h_o}}{s}\sum_{k=1}^{\#_n}\mathds{1}_{\{\Lambda(\vartheta^{2k-1}\omega)\le 1/s\}}\\
  \nonumber
  &\ge\frac{\ee^{-5h_o}}{s}\Ex_{n,h,\omega}[L_n]-\frac{2}{s}-\frac{2\ee^{-5h_o}}{s}\sum_{i=0}^{n-1}\mathds{1}_{\{\Lambda(\vartheta^i\omega)\le 1/s\}}\,.
\end{align}
In this way, Properties $(1)$ and $(2)$ give
\begin{align}
  \nonumber
  \liminf_{n\uparrow\infty}\inf_{h\in H}\frac{1}{n}\Ex_{n,h,\omega}[J_{n,h,\omega}]&\ge\frac{\ee^{-5h_o}}{s}\inf_{h\in H}\rho(h)-\frac{2\ee^{-5h_o}}{s}\probd[\Lambda\le 1/s]\\
  \nonumber
  &\ge\frac{\ee^{-5h_o}}{3s}\inf_{h\in H}\rho(h)>0\,.
\end{align}

\smallskip

\noindent\textit{Part (ii).} To begin with, for $s\in\N$ put $h_s:=-s$
if $h_c=-\infty$ and $h_s:=h_c+1/s$ if $h_c>-\infty$ and note that
$h_s>h_c$ and $\lim_{s\uparrow\infty}h_s=h_c$.  Then, consider the
random variables
\begin{equation*}
  \Lambda_s:=\sum_{j\in\N} j^2 \sup_{h\in[h_s,+\infty)}\Ex_{j,h,\cdot}^{\otimes 2}\bigg[\prod_{k=1}^{j-1}(1-X_kX_k')\bigg]\,,
\end{equation*}
which possess finite expectation by Lemma \ref{lem:decay}.  Birkhoff's
ergodic theorem ensures us that there exists $\Omega_o\in\mathcal{F}$
with $\probd[\Omega_o]=1$ such that for every $\omega\in\Omega_o$ and
$s\in\N$
\begin{equation}
  \lim_{n\uparrow\infty}\frac{1}{n}\sum_{i=0}^{n-1}\Lambda_s(\vartheta^i\omega)=\Exd[\Lambda_s]\,.
  \label{eq:var_J_finite}
\end{equation}
Fix $\omega\in\Omega_o$ and $H\subset(h_c,+\infty)$ compact and pick
$s$ so large that $[h_s,+\infty)\supset H$. Using that
  $2\ee^{h+\omega_{2k}}p(1)^2p(2)/
  [p(2)+\ee^{h+\omega_{2k}}p(1)^2]^2\le 1$ we can write
\begin{equation*}
  \var_{n,h,\omega}[J_{n,h,\omega}] \le
  2\sum_{a=1}^{\#_n}\sum_{b=a}^{\#_n}\Big|\cov_{n,h,\omega}\big[X_{2a-1}X_{2a+1},X_{2b-1}X_{2b+1}\big]\Big|\,.
\end{equation*}
Then, Lemma \ref{lem:corr} gives for all integers $n\ge 3$ and $h\in
H$
\begin{align}
  \nonumber
  \var_{n,h,\omega}[J_{n,h,\omega}]&\le 2\#_n+4\sum_{a=1}^{\#_n-1}\sum_{b=a+1}^{\#_n}\sum_{i=0}^{2a}\sum_{j=2b}^\infty\Ex_{j-i,h,\vartheta^i\omega}^{\otimes 2}\bigg[\prod_{k=1}^{j-i-1}(1-X_kX_k')\bigg]\\
  \nonumber
  &\le n+\sum_{i=0}^{n-1}\sum_{j\in\N} j^2\Ex_{j,h,\vartheta^i\omega}^{\otimes 2}\bigg[\prod_{k=1}^{j-1}(1-X_kX_k')\bigg]
  \le n+\sum_{i=0}^{n-1}\Lambda_s(\vartheta^i\omega)\,.
\end{align}
It therefore follows from \eqref{eq:var_J_finite} that
\begin{equation*}
  \limsup_{n\uparrow\infty}\sup_{h\in H}\frac{\var_{n,h,\omega}[J_{n,h,\omega}]}{n}\le 1+\Exd[\Lambda_s]<+\infty\,.
  \qedhere
  \end{equation*}
\end{proof}

\medskip

We are now able to prove Theorem \ref{th:LCLT}.

\medskip

\begin{proof}[Proof of Theorem \ref{th:LCLT}]
We shall show that for every $n\in\N$, $h\in\Rl$, $\omega\in\Omega$,
and positive number $\lambda<\pi\sqrt{v(h)n}$
\begin{align}
  \nonumber
  &\sup_{l\in\N_0}\Bigg|\sqrt{2\pi v(h)n}\,\prob_{n,h,\omega}[L_n=l]-\exp\bigg\{\!-\frac{(l-\Ex_{n,h,\omega}[L_n])^2}{2v(h)n}\bigg\}\Bigg|\\
  \nonumber
  &\qquad\le \frac{8\lambda+2\lambda^3}{\sqrt{2\pi}}\sup_{u\in\Rl} \Bigg|\prob_{n,h,\omega}\bigg[\frac{L_n-\Ex_{n,h,\omega}[L_n]}{\sqrt{v(h)n}}\le u\bigg]
  -\frac{1}{\sqrt{2\pi}}\int_{-\infty}^u\ee^{-\frac{1}{2}z^2}\dd z\Bigg|\\
  &\qquad\quad
  +\sqrt{\frac{\pi}{2}}\int_{\lambda}^{\pi \sqrt{v(h)n}}\bigg|\Ex_{n,h,\omega}\Big[\ee^{\mathrm{i} z\frac{L_n}{\sqrt{v(h)n}}}\Big]\bigg|\,\dd z
  +\bigg(\sqrt{\frac{\pi}{2}}+\frac{8\lambda}{\pi}\bigg)\int_\lambda^{+\infty}\ee^{-\frac{1}{2}z^2}\dd z\,.
  \label{eq:LCLT_0}
\end{align}
Bound \eqref{eq:LCLT_0} proves the theorem as follows.  Lemma
\ref{lem:bounds_J} and Theorem \ref{th:CLT} assure us that there
exists a set $\Omega_o\in\mathcal{F}$ with $\probd[\Omega_o]=1$ such
that the following properties hold for every $\omega\in\Omega_o$ and
$H\subset(h_c,+\infty)$ compact:
  \vspace{0.1cm}
  \begin{enumerate}[itemsep=0.3em]
    \item $\displaystyle{\liminf_{n\uparrow\infty}\inf_{h\in H}\frac{1}{n}\Ex_{n,h,\omega}[J_{n,h,\omega}]>0\,;}$
    \item $\displaystyle{\limsup_{n\uparrow\infty}\sup_{h\in H}\frac{1}{n}\var_{n,h,\omega}[J_{n,h,\omega}]<+\infty\,;}$
    \item $\displaystyle{\adjustlimits\limsmallspace_{n\uparrow\infty}\sup_{h\in H}\,\sup_{u\in\Rl}\,\Bigg|\prob_{n,h,\omega}
      \bigg[\frac{L_n-\Ex_{n,h,\omega}[L_n]}{\sqrt{v(h)n}}\le u\bigg]-\frac{1}{\sqrt{2\pi}}\int_{-\infty}^u\ee^{-\frac{1}{2}z^2}\dd z\Bigg|=0}\,.$
      \end{enumerate}
Pick $\omega\in\Omega_o$ and a compact set $H\subset(h_c,+\infty)$.
Properties $(1)$ and $(2)$ show that $\inf_{h\in
  H}\Ex_{n,h,\omega}[J_{n,h,\omega}]\ge c n$ and $\sup_{h\in
  H}\var_{n,h,\omega}[J_{n,h,\omega}]\le n/c$ for every integer
$n>n_o$ with some numbers $c>0$ and $n_o\ge 3$.  The continuity and
positivity of $v:=\partial_h^2f$ on $(h_c,+\infty)$ tell us that $c$
can be chosen so small to also have $\inf_{h\in H}v(h)\ge c$ and
$\sup_{h\in H}v(h)\le 1/c$. Then, Lemma \ref{lem:charac} implies for
an integer $n>n_o$, $h\in H$, and
$z\in[-\pi\sqrt{v(h)n},\pi\sqrt{v(h)n}]$
 \begin{equation*}
   \bigg|\Ex_{n,h,\omega}\Big[\ee^{\mathrm{i}\frac{z}{\sqrt{v(h)n}}L_n}\Big]\bigg|\le \ee^{-\frac{c}{2\pi^2v(h)}z^2}+\frac{4}{c^3n}\le \ee^{-\frac{c^2}{2\pi^2}z^2}+\frac{4}{c^3n}\,.
 \end{equation*}
It therefore follows from \eqref{eq:LCLT_0} that for all integers
$n>n_o$ and positive numbers $\lambda<\pi\sqrt{cn}$
\begin{align}
  \nonumber
  &\sup_{h\in H}\,\sup_{l\in\N_0}\Bigg|\sqrt{2\pi v(h)n}\,\prob_{n,h,\omega}[L_n=l]-\exp\bigg\{\!-\frac{(l-\Ex_{n,h,\omega}[L_n])^2}{2v(h)n}\bigg\}\Bigg|\\
  \nonumber
  &\qquad\le \frac{8\lambda+2\lambda^3}{\sqrt{2\pi}}\sup_{h\in H}\,
  \sup_{u\in\Rl} \Bigg|\prob_{n,h,\omega}\bigg[\frac{L_n-\Ex_{n,h,\omega}[L_n]}{\sqrt{v(h)n}}\le u\bigg]-\frac{1}{\sqrt{2\pi}}\int_{-\infty}^u\ee^{-\frac{1}{2}z^2}\dd z\Bigg|\\
  \nonumber
  &\qquad\quad
  +\sqrt{\frac{\pi}{2}}\int_{\lambda}^{+\infty}\ee^{-\frac{c^2}{2\pi^2}z^2}\dd z+\sqrt{\frac{8\pi^3}{c^7n}}
  +\bigg(\sqrt{\frac{\pi}{2}}+\frac{8\lambda}{\pi}\bigg)\int_\lambda^{+\infty}\ee^{-\frac{1}{2}z^2}\dd z\,,
\end{align}
which thanks to Property $(3)$ gives for every $\lambda>0$
\begin{align}
  \nonumber
  &\limsup_{n\uparrow\infty}\sup_{h\in H}\,\sup_{l\in\N_0}\Bigg|\sqrt{2\pi v(h)n}\,\prob_{n,h,\omega}[L_n=l]-\exp\bigg\{\!-\frac{(l-\Ex_{n,h,\omega}[L_n])^2}{2v(h)n}\bigg\}\Bigg|\\
  \nonumber
  &\qquad\le \sqrt{\frac{\pi}{2}}\int_{\lambda}^{+\infty}\ee^{-\frac{c^2}{2\pi^2}z^2}\dd z
  +\bigg(\sqrt{\frac{\pi}{2}}+\frac{8\lambda}{\pi}\bigg)\int_\lambda^{+\infty}\ee^{-\frac{1}{2}z^2}\dd z\,.
\end{align}
Theorem~\ref{th:LCLT} follows from here by letting $\lambda$ go to
infinity.

\smallskip

\noindent\textit{The bound \eqref{eq:LCLT_0}.}  Fix $n\in\N$, $h\in\Rl$,
$\omega\in\Omega$, and a positive number
$\lambda<\pi\sqrt{v(h)n}$. The identities
\begin{align}
  \nonumber
  \sqrt{2\pi v(h)n}\,\prob_{n,h,\omega}[L_n=l]&=\sqrt{\frac{v(h)n}{2\pi}}\int_{-\pi}^\pi \Ex_{n,h,\omega}\big[\ee^{\mathrm{i}z L_n}\big]\ee^{-\mathrm{i}z l}\dd z\\
  \nonumber
  &=\frac{1}{\sqrt{2\pi}}\int_{-\pi\sqrt{v(h)n}}^{\pi\sqrt{v(h)n}} \Ex_{n,h,\omega}\bigg[\ee^{\mathrm{i}z \frac{L_n-\Ex_{n,h,\omega}[L_n]}{\sqrt{v(h)n}}}\bigg]
  \ee^{\mathrm{i}z \frac{\Ex_{n,h,\omega}[L_n]-l}{\sqrt{v(h)n}}}\dd z
\end{align}
and
\begin{equation*}
  \exp\bigg\{\!-\frac{(l-\Ex_{n,h,\omega}[L_n])^2}{2v(h)n}\bigg\}=\frac{1}{\sqrt{2\pi}}\int_{-\infty}^{+\infty}
  \ee^{\mathrm{i}z\frac{\Ex_{n,h,\omega}[L_n]-l}{\sqrt{v(h)n}} -\frac{1}{2}z^2} \dd z\,,
\end{equation*}
which are valid for every $l\in\N_0$, yield
\begin{align}
  \nonumber
  &\sup_{l\in\N_0}\Bigg|\sqrt{2\pi v(h)n}\,\prob_{n,h,\omega}[L_n=l]-\exp\bigg\{\!-\frac{(l-\Ex_{n,h,\omega}[L_n])^2}{2v(h)n}\bigg\}\Bigg|\\
  \nonumber
  &\qquad\le \frac{1}{\sqrt{2\pi}}\int_{-\pi\sqrt{v(h)n}}^{\pi\sqrt{v(h)n}}\Bigg|\Ex_{n,h,\omega}\bigg[\ee^{\mathrm{i}z\frac{L_n-\Ex_{n,h,\omega}[L_n]}{\sqrt{v(h)n}}}\bigg]
  -\ee^{-\frac{1}{2}z^2}\Bigg|\,\dd z+\sqrt{\frac{\pi}{2}}\int_{\pi\sqrt{v(h)n}}^{+\infty}\ee^{-\frac{1}{2}z^2}\dd z\\
  \nonumber
  &\qquad\le \frac{1}{\sqrt{2\pi}}\int_{-\lambda}^\lambda\Bigg|\Ex_{n,h,\omega}\bigg[\ee^{\mathrm{i}z\frac{L_n-\Ex_{n,h,\omega}[L_n]}{\sqrt{v(h)n}}}\bigg]
  -\ee^{-\frac{1}{2}z^2}\Bigg|\,\dd z\\
  \nonumber
  &\qquad\quad
  +\sqrt{\frac{\pi}{2}}\int_\lambda^{\pi \sqrt{v(h)n}}\bigg|\Ex_{n,h,\omega}\Big[\ee^{\mathrm{i} z\frac{L_n}{\sqrt{v(h)n}}}\Big]\bigg|\,\dd z
  +\sqrt{\frac{\pi}{2}}\int_\lambda^{+\infty}\ee^{-\frac{1}{2}z^2}\dd z\,.
\end{align}
Setting
$\mathcal{N}(u):=(1/\sqrt{2\pi})\int_{-\infty}^u\ee^{-z^2/2}\dd z$ for
brevity, we obtain \eqref{eq:LCLT_0} from here if we show that for all
$z\in\Rl$
\begin{align}
  \nonumber
  &\Bigg|\Ex_{n,h,\omega}\bigg[\ee^{\mathrm{i}z\frac{L_n-\Ex_{n,h,\omega}[L_n]}{\sqrt{v(h)n}}}\bigg]-\ee^{-\frac{1}{2}z^2}\Bigg|\\
  \nonumber
  &\qquad\le (4+2|z|\lambda)\sup_{u\in\Rl} \Bigg|\prob_{n,h,\omega}\bigg[\frac{L_n-\Ex_{n,h,\omega}[L_n]}{\sqrt{v(h)n}}\le u\bigg]-\mathcal{N}(u)\Bigg|
  +\sqrt{\frac{32}{\pi}}\int_\lambda^{+\infty}\ee^{-\frac{1}{2}u^2}\dd u\,.
\end{align}
To prove this, we appeal to an integration by parts to write for
$z,\zeta\in\Rl$
\begin{align}
  \nonumber
 \mathrm{i}z\int_{-\lambda}^\lambda\ee^{\mathrm{i}zu}\mathds{1}_{\{\zeta\le u\}}\dd u&= \ee^{\mathrm{i}z\lambda}-\ee^{\mathrm{i}z\zeta}
  +\big(\ee^{\mathrm{i}z\zeta}-\ee^{-\mathrm{i}z\lambda}\big)\mathds{1}_{\{\zeta\le-\lambda\}}
  +\big(\ee^{\mathrm{i}z\zeta}-\ee^{\mathrm{i}z\lambda}\big)\mathds{1}_{\{\zeta>\lambda\}}
\end{align}
and
\begin{align}
  \nonumber
  \mathrm{i}z\int_{-\lambda}^\lambda\ee^{\mathrm{i}zu}\mathcal{N}(u)\dd u&=\ee^{\mathrm{i}z\lambda}\mathcal{N}(\lambda)-\ee^{-\mathrm{i}z\lambda}\mathcal{N}(-\lambda)
  -\frac{1}{\sqrt{2\pi}}\int_{-\lambda}^\lambda\ee^{\mathrm{i}zu-\frac{1}{2}u^2}\dd u\\
\nonumber
&=\ee^{\mathrm{i}z\lambda}-\ee^{-\frac{1}{2}z^2}+\sqrt{\frac{2}{\pi}}\int_\lambda^{+\infty}\big[\cos(zu)-\cos(z\lambda)]\ee^{-\frac{1}{2}u^2}\dd u\,,
\end{align}
which after subtraction one to another yield
\begin{align}
  \nonumber
  \ee^{\mathrm{i}z\zeta}-\ee^{-\frac{1}{2}z^2}&=\mathrm{i}z\int_{-\lambda}^\lambda\ee^{\mathrm{i}zu}\big[\mathcal{N}(u)-\mathds{1}_{\{\zeta\le u\}}\big]\dd u
  +\sqrt{\frac{2}{\pi}}\int_\lambda^{+\infty}\big[\cos(z\lambda)-\cos(zu)\big]\ee^{-\frac{1}{2}u^2}\dd u\\
  \nonumber
  &\quad+\big(\ee^{\mathrm{i}z\zeta}-\ee^{-\mathrm{i}z\lambda}\big)\mathds{1}_{\{\zeta\le-\lambda\}}+\big(\ee^{\mathrm{i}z\zeta}-\ee^{\mathrm{i}z\lambda}\big)\mathds{1}_{\{\zeta>\lambda\}}\,.
\end{align}
Choosing $\zeta=(L_n-\Ex_{n,h,\omega}[L_n])/\sqrt{v(h)n}$ and taking
expectation, we deduce that
\begin{align}
  \nonumber
  &\Bigg|\Ex_{n,h,\omega}\bigg[\ee^{\mathrm{i}z\frac{L_n-\Ex_{n,h,\omega}[L_n]}{\sqrt{v(h)n}}}\bigg]-\ee^{-\frac{1}{2}z^2}\Bigg|\\
  \nonumber
  &\qquad\le 2|z|\lambda\,\sup_{u\in\Rl} \Bigg|\prob_{n,h,\omega}\bigg[\frac{L_n-\Ex_{n,h,\omega}[L_n]}{\sqrt{v(h)n}}\le u\bigg]-\mathcal{N}(u)\Bigg|
  +\sqrt{\frac{8}{\pi}}\int_\lambda^{+\infty}\ee^{-\frac{1}{2}u^2}\dd u\\
  \nonumber
  &\qquad\quad +2\prob_{n,h,\omega}\bigg[\frac{L_n-\Ex_{n,h,\omega}[L_n]}{\sqrt{v(h)n}}\le -\lambda\bigg]
  +2\prob_{n,h,\omega}\bigg[\frac{L_n-\Ex_{n,h,\omega}[L_n]}{\sqrt{v(h)n}}>\lambda\bigg]\\
  \nonumber
  &\qquad\le  (4+2|z|\lambda)\sup_{u\in\Rl} \Bigg|\prob_{n,h,\omega}\bigg[\frac{L_n-\Ex_{n,h,\omega}[L_n]}{\sqrt{v(h)n}}\le u\bigg]-\mathcal{N}(u)\Bigg|\\
  \nonumber
  &\qquad\quad +\sqrt{\frac{32}{\pi}}\int_\lambda^{+\infty}\ee^{-\frac{1}{2}u^2}\dd u\,.
  \qedhere
 \end{align}
\end{proof}

\medskip

\subsection{Almost microscopic contact gaps}
\label{sec:gaps}

Typical contact gaps in the localized phase are microscopic and the largest ones are almost
microscopic and well characterized (see \cite[Proposition
  1.8]{cf:companionpaper}). The following result allows us to measure
how rapidly the probability of large contact gaps decay.

\medskip

\begin{lemma}
  \label{lem:max_uncond}  
  The following property holds for $\pae$: for every closed set
  $H\subset(h_c,+\infty)$ and $\gamma>0$ there exists a constant $c>0$
  (independent of $\omega$) such that
\begin{equation*}
  \adjustlimits\limsup_{n\uparrow\infty}\sup_{h\in H}\,n^\gamma\prob_{n,h,\omega}\big[M_n>c\log n\big]<+\infty\,.
\end{equation*}
\end{lemma}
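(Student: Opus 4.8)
The plan is to bound $\prob_{n,h,\omega}[M_n>m]$ by a union bound over the left endpoint and the length of a contact gap, to write each single‑gap probability exactly via the renewal factorization \eqref{eq:ci-fact1}, and to control the resulting partition functions by combining the concentration estimate of Theorem~\ref{th:concentration} with the free energy convergence of Theorem~\ref{th:Cinfty}. First I would reduce to the case where $H$ is compact: since $H$ is closed and contained in $(h_c,+\infty)$ one has $h_1:=\inf H>h_c$, and if $H$ is unbounded I would split it as $(H\cap[h_1,h_2])\cup(H\cap[h_2,+\infty))$ with $h_2$ large; the bounded piece is treated below, while for $h\ge h_2$ the elementary bounds $Z_{t,h}(\omega')\ge\ee^{\sum_{c=1}^t(h+\omega'_c)}p(1)^t$ and $p(t)\le p(1)^{2-t}$, together with the standard almost sure bounds on $\max_{j\le 2n}\omega_j$ and on partial sums $\sum_c\omega_c$ obtained by Borel--Cantelli from the exponential moment in Assumption~\ref{assump:omega}, make the relevant double sum converge geometrically once $h_2>1+2\log(1/p(1))$, yielding an even faster polynomial decay. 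So from now on $H$ is compact; put $h_o:=\sup_{h\in H}|h|$ and $\mathfrak f:=\inf_{h\in H}f(h)$, the latter being strictly positive because $f$ is continuous and strictly positive on $(h_c,+\infty)$.

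Next I would exploit the combinatorial identity $\{M_n>m\}=\bigcup\{X_a=1,\,X_b=1,\,X_c=0\text{ for }a<c<b\}$, the union being over $0\le a<b\le n$ with $b-a>m$ (two consecutive contacts at distance $>m$, both $\le n$). Splitting the renewal at $a$ and at $b=a+t$ by means of \eqref{eq:ci-fact1} and elementary renewal computations gives, for $0\le a$, $t\ge1$, $a+t\le n$,
\begin{equation*}
  \prob_{n,h,\omega}\big[X_a=1,\,X_{a+t}=1,\,X_c=0\text{ for }a<c<a+t\big]=\frac{Z_{a,h}(\omega)\,\ee^{h+\omega_{a+t}}p(t)\,Z_{n-a-t,h}(\vartheta^{a+t}\omega)}{Z_{n,h}(\omega)}\,.
\end{equation*}
Bounding the denominator from below by super-multiplicativity, $Z_{n,h}(\omega)\ge Z_{a,h}(\omega)\,Z_{t,h}(\vartheta^a\omega)\,Z_{n-a-t,h}(\vartheta^{a+t}\omega)$, yields the pointwise bound on a single‑gap probability $\le \ee^{h+\omega_{a+t}}p(t)/Z_{t,h}(\vartheta^a\omega)$, whence after the union bound
\begin{equation*}
  \prob_{n,h,\omega}[M_n>m]\le\sum_{a=0}^{n}\sum_{t=m+1}^{n}\frac{\ee^{h+\omega_{a+t}}p(t)}{Z_{t,h}(\vartheta^a\omega)}\,.
\end{equation*}

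The hard part, and the only step requiring real work, is a lower bound on the shifted partition functions that is uniform in the shift and in $h\in H$. I claim that, for $\pae$, if $c$ is large enough then the event $D_n:=\{\exists\,h\in H,\ a\le n,\ \lceil c\log n\rceil\le t\le n:\ Z_{t,h}(\vartheta^a\omega)<\ee^{t\mathfrak f/2}\}$ fails for all large $n$. To see this, fix a finite net $h_1<\cdots<h_K$ of $H$ of small mesh; since $h\mapsto Z_{t,h}$ is non-decreasing, it suffices to lower bound $Z_{t,h_j}(\vartheta^a\omega)$ at each net point. By translation invariance $Z_{t,h_j}(\vartheta^a\omega)$ has the law of $Z_{t,h_j}(\omega)$, and by the uniform convergence $\sup_{h\in H}|\Exd[\tfrac1t\log Z_{t,h}]-f(h)|\to0$ of Theorem~\ref{th:Cinfty} (case $r=0$) one has $\Exd[\log Z_{t,h_j}]\ge t(f(h_j)-\mathfrak f/4)\ge (3\mathfrak f/4)\,t$ for all $t$ large, uniformly in $j$. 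Theorem~\ref{th:concentration} with $u=t\mathfrak f/4$ then gives $\probd[Z_{t,h_j}(\vartheta^a\omega)<\ee^{t\mathfrak f/2}]\le 2\,\ee^{-\beta t}$ for some constant $\beta>0$, so a union bound over $j\le K$, $a\le n$, and $\lceil c\log n\rceil\le t\le n$ bounds $\probd[D_n]$ by $C\,n^{2-\beta c}$, which is summable in $n$ as soon as $c>3/\beta$; Borel--Cantelli concludes the claim.

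Finally I would assemble the pieces. On the complement of $D_n$, using $p(t)\le1$, $\ee^h\le\ee^{h_o}$, and the almost sure bound $\omega_{a+t}\le(3/\eta)\log n$ for $a+t\le 2n$ (again Borel--Cantelli from the exponential moment in Assumption~\ref{assump:omega}), one gets for $n$ large, uniformly in $h\in H$,
\begin{equation*}
  \prob_{n,h,\omega}[M_n>c\log n]\le \ee^{h_o}\,n^{3/\eta}\sum_{a=0}^{n}\sum_{t\ge c\log n}\ee^{-t\mathfrak f/2}\le C\,n^{1+3/\eta-c\mathfrak f/2}\,.
\end{equation*}
Choosing $c$ so large that $c\mathfrak f/2>\gamma+1+3/\eta$ (and $c>3/\beta$, and $c$ at least as large as the constant produced in the unbounded‑part reduction) makes $n^{\gamma}\sup_{h\in H}\prob_{n,h,\omega}[M_n>c\log n]\to0$, which is stronger than the asserted finiteness of the $\limsup$. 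A diagonal argument over a countable exhaustion of $(h_c,+\infty)$ by compact sets and over rational $\gamma$ then produces a single full‑probability set of charges on which the conclusion holds for every $H$ and $\gamma$ simultaneously. The whole difficulty is concentrated in the uniform partition‑function lower bound of the third paragraph; everything else is bookkeeping.
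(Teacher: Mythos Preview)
Your argument for compact $H$ is correct and takes a genuinely different route from the paper. The paper also starts from the union bound over gaps and the same single-gap bound (which in its notation is $\prob_{t,h,\vartheta^a\omega}[T_1=t]$), but then controls this quantity via the \emph{alternative free energy} $\mu(h)$ of \eqref{eq:mu-def}: it sets $\Lambda_s:=\sum_j\ee^{\mu(h_s)j/2}\prob_{j,h_s,\cdot}[T_1=j]$, which has finite $\probd$-expectation because $\mu(h_s)>0$, and uses Birkhoff's ergodic theorem on the shifts $\Lambda_s(\vartheta^i\omega)$ to get the almost sure bound; the supremum over $H$ is handled by the monotonicity of $\prob_{j,h,\cdot}[T_1=j]$ in $h$. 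Your approach avoids $\mu$ entirely and instead lower bounds $Z_{t,h}(\vartheta^a\omega)$ directly by combining the concentration inequality (Theorem~\ref{th:concentration}) with a union bound and Borel--Cantelli. What the paper's route buys is that Birkhoff handles the uniformity in the shift $a$ in one stroke and no separate control of $\omega_{a+t}$ is needed; what your route buys is that it is more self-contained, relying only on $f$ and concentration rather than on the companion paper's $\mu$.

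One simplification you are missing: your reduction to compact $H$ is unnecessary (and, as written, the partial-sum control in the unbounded piece is too loosely sketched: a bound like $|\sum_{c\le k}\omega_c|=O(k^{1/2+\epsilon})$ is useless for windows of length $t\sim\log n$ starting at $a\sim n$; you would need the windowed concentration $\probd[|\sum_{c=a+1}^{a+t-1}\omega_c|>\delta t]\le 2\ee^{-c_\delta t}$ plus another Borel--Cantelli over $(a,t)$). The cleaner fix is to observe that your single-gap bound $\ee^{h+\omega_{a+t}}p(t)/Z_{t,h}(\vartheta^a\omega)$ is non-increasing in $h$ (its $\partial_h\log$ equals $1-\Ex_{t,h,\vartheta^a\omega}[L_t]\le 0$ since $L_t\ge 1$ under the pinned measure). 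Hence $\sup_{h\in H}$ of your double sum is attained at $h_*:=\inf H>h_c$, and you can run your entire compact-$H$ argument with $h_*$ in place of $H$, replacing $h_o$ by $|h_*|$ and $\mathfrak f$ by $f(h_*)$; no upper bound on $H$ is needed. This is exactly how the paper handles the sup over $H$.
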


\medskip

\begin{proof}[Proof of Lemma \ref{lem:max_uncond}]
For $s\in\N$ define $h_s:=-s$ if $h_c=-\infty$ and $h_s:=h_c+1/s$ if
$h_c>-\infty$ and note that $h_s>h_c$ and
$\lim_{s\uparrow\infty}h_s=h_c$.  Then, consider the random variables
\begin{equation*}
  \Lambda_s:=\sum_{j\in\N}\ee^{\frac{1}{2}\mu(h_s)j}\,\prob_{j,h_s,\cdot}[T_1=j]\,,
\end{equation*}
$\mu(h)$ being the alternative free energy defined by
\eqref{eq:mu-def}. The variables $\Lambda_s$ possess finite
expectation because $\mu(h)>0$ for $h>h_c$, as recalled in Section
\ref{sec:richiami}, and Birkhoff's ergodic theorem ensures us that
there exists $\Omega_o\in\mathcal{F}$ with $\probd[\Omega_o]=1$ such
that for every $\omega\in\Omega_o$ and $s\in\N$
\begin{equation}
  \lim_{n\uparrow\infty}\frac{1}{n}\sum_{i=0}^{n-1}\Lambda_s(\vartheta^i\omega)=\Exd[\Lambda_s]\,.
  \label{lem:max_uncond_0}
\end{equation}

Fix $\omega\in\Omega_o$, $H\subset(h_c,+\infty)$ closed, and
$\gamma>0$. Let $s$ be so large that $[h_s,+\infty)\supset H$ and put
  $c:=(2+2\gamma)/\mu(h_s)$. We show that 
\begin{equation}
  \adjustlimits\limsup_{n\uparrow\infty}\sup_{h\in H}\,n^\gamma\prob_{n,h,\omega}\big[M_n>c\log n\big]\le \Exd[\Lambda_s]<+\infty\,.
   \label{lem:max_uncond_1}
\end{equation}
Setting $m_n:=\lfloor c\log n\rfloor$ for brevity, for every $n\in\N$
and $h\in H$ we can write
\begin{align}
  \nonumber
  \prob_{n,h,\omega}\big[M_n>c\log n\big]&=\prob_{n,h,\omega}\big[M_n>m_n\big]\\
  \nonumber
  &\le \sum_{i=0}^{n-m_n-1}\Ex_{n,h,\omega}\bigg[X_i\prod_{k=i+1}^{i+m_n}(1-X_k)\bigg]
\end{align}
since the condition $M_n>m_n$ implies that there is a contact site
$i\in\{0,\ldots,n-m_n-1\}$ followed by an excursion of size at least
$m_n$. Exploiting the identity
\begin{equation*}
  X_i\prod_{k=i+1}^{i+m_n}(1-X_k)=\sum_{j=i+m_n+1}^nX_i\prod_{k=i+1}^{j-1}(1-X_k)X_j+\prod_{k=i+1}^n(1-X_k)
\end{equation*}
and the fact that $X_n=1$ almost surely with respect to the polymer
measure $\prob_{n,h,\omega}$, we can even state that
\begin{equation*}
  \prob_{n,h,\omega}\big[M_n>c\log n\big]\le\sum_{i=0}^{n-m_n-1}\sum_{j=i+m_n+1}^n\Ex_{n,h,\omega}\bigg[X_i\prod_{k=i+1}^{j-1}(1-X_k)X_j\bigg]\,.
\end{equation*}
At this point, an application of \eqref{eq:ci-fact1} first and a
Chernoff--type bound later give
\begin{align}
  \nonumber
  \prob_{n,h,\omega}\big[M_n>c\log n\big]&\le \sum_{i=0}^{n-m_n-1}\sum_{j=i+m_n+1}^n\Ex_{j-i,h,\vartheta^i\omega}\bigg[\prod_{k=1}^{j-i-1}(1-X_k)\bigg]\\
  \nonumber
  &\le \sum_{i=0}^{n-1}\sum_{j=m_n+1}^\infty\prob_{j,h,\vartheta^i\omega}[T_1=j]\\
  \nonumber
  &\le\sum_{i=0}^{n-1}\sum_{j\in\N}\ee^{\frac{1}{2}\mu(h_s) j-\frac{1}{2}\mu(h_s)(m_n+1)}\,\prob_{j,h,\vartheta^i\omega}[T_1=j]\,.
\end{align}
Since
$\prob_{j,h,\vartheta^i\omega}[T_1=j]=p(j)/\Ex[\ee^{\sum_{a=1}^{j-1}(h+\omega_{i+a})X_a}X_j]$
is decreasing with respect to $h$, we finally deduce that
\begin{align}
  \nonumber
  \sup_{h\in H}\prob_{n,h,\omega}\big[M_n>c\log n\big]
  &\le  \ee^{-\frac{1}{2}\mu(h_s)(m_n+1)}\sum_{i=0}^{n-1}\sum_{j\in\N}\ee^{\frac{1}{2}\mu(h_s)j}\,\prob_{j,h_s,\vartheta^i\omega}[T_1=j]\\
  \nonumber
  &\le  \frac{1}{n^{\gamma+1}}\sum_{i=0}^{n-1}\Lambda_s(\vartheta^i\omega)\,.
\end{align}
This bound implies \eqref{lem:max_uncond_1} thanks to
\eqref{lem:max_uncond_0}.
\end{proof}

\medskip

We are finally able to prove Theorem \ref{th:main} by means of a
change of measure.

\medskip

\begin{proof}[Proof of Theorem \ref{th:main}]
To begin with, we note that the mean contact number
$\Ex_{n,h,\omega}[L_n]$ with given $n\in\N$ and $\omega\in\Omega$
increases continuously with respect to $h$ and the limits
$\lim_{h\downarrow-\infty}\Ex_{n,h,\omega}[L_n]=1$ and
$\lim_{h\uparrow+\infty}\Ex_{n,h,\omega}[L_n]=n$ hold true. Thus, for
each $l\in\{2,\ldots,n-1\}$ there exists a unique number
$h_{n,l,\omega}$ such that
$\Ex_{n,h_{n,l,\omega},\omega}[L_n]=l$. This number allows us a
profitable change of measure, which in turn originates the following
bound that we use to prove the theorem: for every $n\in\N$,
$h,c\in\Rl$, $\omega\in\Omega$, and $l\in\{2,\ldots,n-1\}$
\begin{align}
  \nonumber
  \prob_{n,h,\omega}\big[M_n>c\log n\,\big| L_n = l\big]&= \frac{\prob_{n,h_{n,l,\omega},\omega}[M_n>c\log n\,,L_n=l]}{\prob_{n,h_{n,l,\omega},\omega}[L_n=l]}\\
  &\le \frac{\prob_{n,h_{n,l,\omega},\omega}[M_n>c\log n]}{\prob_{n,h_{n,l,\omega},\omega}[L_n=l]}\,.
\label{th:main_1}
\end{align}

\smallskip

Theorems \ref{th:Cinfty} and \ref{th:LCLT} and Lemma
\ref{lem:max_uncond} assure us that there exists
$\Omega_o\in\mathcal{F}$ with $\probd[\Omega_o]=1$ such that the
following properties hold for all $\omega\in\Omega_o$,
$H\subset(h_c,+\infty)$ compact, and some constant $c>0$ possibly
depending on $H$ but not on $\omega$:
\vspace{0.1cm}
\begin{enumerate}[itemsep=0.5em]
\item $\displaystyle{\adjustlimits\lim_{n\uparrow\infty}\sup_{h\in H}\bigg|\Ex_{n,h,\omega}\bigg[\frac{L_n}{n}\bigg]-\rho(h)\bigg|=0\,;}$
  \item $\displaystyle{\adjustlimits\limsmallspace_{n\uparrow\infty}
    \sup_{h\in H}\,\sup_{l\in\N_0}\Bigg|\sqrt{2\pi v(h) n}\,\prob_{n,h,\omega}[L_n=l]-\exp \bigg\{\!-\frac{(l-\Ex_{n,h,\omega}[L_n])^2}{2 v(h) n}\bigg\}\Bigg|=0\,;}$
    \vspace{0.25cm}
    \item  $\displaystyle{ \adjustlimits\limsup_{n\uparrow\infty}\sup_{h\in H}\,n\,\prob_{n,h,\omega}\big[M_n>c\log n\big]<+\infty\,.}$
\end{enumerate}
\vspace{0.1cm} 
Fix $\omega\in\Omega_o$ and suppose without restriction
that the compact set $R$ in the statement of Theorem \ref{th:main} is
of the form $[\delta,1-\delta]$ with some $\delta\in(0,1/2)$. In this
way, $nR \cap \N$ becomes the set of all integers $l$ such that
$\delta n\le l\le (1-\delta)n$.  Since
$\int_\Omega\omega_0^2\,\probd[\dd\omega]>0$, according to Theorem
\ref{th:smoothing} we have $\rho_c=0$ and, whatever $\delta$ is, there
are values of the free parameter $h$ in the localized phase where the
contact density attains the values $\delta/2$ and $1-\delta/2$. Such
values are $\imath_\rho(\delta/2)>h_c$ and
$\imath_\rho(1-\delta/2)>\imath_\rho(\delta/2)$, $\imath_\rho$ being the inverse
function of the contact density introduced in Section \ref{sec:LDP}.
Property $(1)$ tells us that
$\lim_{n\uparrow\infty}\Ex_{n,\imath_\rho(\delta/2),\omega}[L_n/n]=\delta/2$
and
$\lim_{n\uparrow\infty}\Ex_{n,\imath_\rho(1-\delta/2),\omega}[L_n/n]=1-\delta/2$,
so that $\Ex_{n,\imath_\rho(\delta/2),\omega}[L_n]\le\delta n$ and
$\Ex_{n,\imath_\rho(1-\delta/2),\omega}[L_n]\ge (1-\delta)n$ for all
integers $n\ge n_o$ with some number $n_o$. Setting
$H:=[\imath_\rho(\delta/2),\imath_\rho(1-\delta/2)]$, it therefore follows that
the above number $h_{n,l,\omega}$ belongs to $H$ for all integers
$n\ge n_o$ and $l\in  nR \cap \N$. Let $c>0$ be a constant
(independent of $\omega$) such that Property $(3)$ holds. For all
integers $n\ge n_o$ and $l\in  nR \cap \N$ we have
\begin{equation}
  \prob_{n,h_{n,l,\omega},\omega}\big[M_n>c\log n\big]\le \sup_{h\in H}\prob_{n,h,\omega}\big[M_n>c\log n\big]
\label{th:main_100}
\end{equation}
and, recalling that
$\Ex_{n,h_{n,l,\omega},\omega}[L_n]=l$ by construction, 
\begin{align}
  \nonumber
  &\sqrt{2\pi\sup_{h\in H}v(h)n}\,\prob_{n,h_{n,l,\omega},\omega}[L_n=l]\\
  \nonumber
  &\qquad\ge\sqrt{2\pi v(h_{n,l,\omega})n}\,\prob_{n,h_{n,l,\omega},\omega}[L_n=l]\\
  &\qquad\ge 1-\sup_{h\in H}\,\sup_{l\in\N_0}\Bigg|\sqrt{2\pi v(h) n}\,\prob_{n,h,\omega}[L_n=l]-\exp \bigg\{\!-\frac{(l-\Ex_{n,h,\omega}[L_n])^2}{2 v(h) n}\bigg\}\Bigg|\,.
\label{th:main_101}
\end{align}
Combining bounds \eqref{th:main_100} and \eqref{th:main_101} together
and invoking Properties $(2)$ and $(3)$ we get
\begin{equation*}
  \adjustlimits\limsmallspace_{n\uparrow\infty}\sup_{l\in  nR \cap \N}\,\frac{\prob_{n,h_{n,l,\omega},\omega}[M_n>c\log n]}{\prob_{n,h_{n,l,\omega},\omega}[L_n=l]}=0\,.
\end{equation*}
This proves the theorem thanks to bound \eqref{th:main_1}.
\end{proof}

\subsection{Mesoscopic contact averages}
\label{sec:meso}

Theorem~\ref{th:smallscales} follows from the next finding,
Theorem~\ref{th:small_scales}, exactly in the same way as
Theorem~\ref{th:main} follows from Lemma~\ref{lem:max_uncond}. But
Theorem~\ref{th:small_scales} is a result of interest in its own right
as it shows that the macroscopic behavior of the (unconditioned)
pinning model is already detectable at mesoscopic scales that are just
required to grow faster than $\log n$.

\medskip

\begin{theorem}
  \label{th:small_scales}
  The following property holds for $\pae$: for every compact set
  $H\subset(h_c,+\infty)$, all numbers $\gamma,\epsilon>0$, and any
  sequence $\{\zeta_n\}_{n\in\N}$ of positive numbers such that
  $\lim_{n\uparrow\infty}\zeta_n=+\infty$
  \begin{equation*}
    \adjustlimits\lim_{n\uparrow\infty}\sup_{h\in H}n^\gamma\,
    \prob_{n,h,\omega}\Bigg[\max_{\substack{0\le i<j\le n\\j-i\ge \zeta_n\log n}}\bigg|\frac{1}{j-i}\sum_{a=i+1}^jX_a-\rho(h)\bigg|>\epsilon\Bigg]<+\infty\,.
\end{equation*}
  \end{theorem}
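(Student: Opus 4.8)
The plan is to combine a union bound over the $O(n^{2})$ sub-blocks of $\{0,\dots,n\}$ of length at least $\zeta_{n}\log n$ with a per-block estimate decaying exponentially in the block length: since $\zeta_{n}\to\infty$, a bound of order $\ee^{-c(j-i)}$ per block produces $n^{-c\zeta_{n}}$, which absorbs the prefactors $n^{\gamma}$ and $n^{2}$. The first step is to replace a generic block $\{i+1,\dots,j\}$ by one delimited by two contacts. Let $c_{0}>0$ be the constant supplied by Lemma~\ref{lem:max_uncond} applied with exponent $\gamma+1$, so that $\limsup_{n\uparrow\infty}\sup_{h\in H}n^{\gamma+1}\prob_{n,h,\omega}[M_{n}>c_{0}\log n]<+\infty$ for $\pae$. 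On the event $\{M_{n}\le c_{0}\log n\}$ every contact gap is $\le c_{0}\log n$, so when $j-i\ge\zeta_{n}\log n$ there are contacts $p,r$ with $i<p\le i+c_{0}\log n$, $j-c_{0}\log n\le r\le j$, $\sum_{a=i+1}^{j}X_{a}=\sum_{a=p}^{r}X_{a}$ and $(j-i)-2c_{0}\log n\le r-p\le j-i$; as $\zeta_{n}\to\infty$, for $n$ large the correction $\rho(h)\,2c_{0}\log n$ is at most $\tfrac{\epsilon}{2}(r-p)$, so $\big|\tfrac{1}{j-i}\sum_{a=i+1}^{j}X_{a}-\rho(h)\big|>\epsilon$ forces $\big|\sum_{a=p}^{r}X_{a}-\rho(h)(r-p)\big|>\tfrac{\epsilon}{2}(r-p)$. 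Hence for $\pae$ and $n$ large, $n^{\gamma}$ times the probability in the statement is bounded, uniformly in $h\in H$, by $n^{\gamma}\prob_{n,h,\omega}[M_{n}>c_{0}\log n]=O(1/n)$ plus $n^{\gamma}\sum\prob_{n,h,\omega}\big[X_{p}=X_{r}=1,\ \big|\sum_{a=p}^{r}X_{a}-\rho(h)(r-p)\big|>\tfrac{\epsilon}{2}(r-p)\big]$, the sum over $0\le p<r\le n$ with $r-p\ge(\zeta_{n}-2c_{0})\log n$.

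For each summand I would bound $\prob_{n,h,\omega}[X_{p}=X_{r}=1]\le1$ and apply the factorization \eqref{eq:ci-fact1} twice: conditionally on $\{X_{p}=X_{r}=1\}$ the block $(X_{p},\dots,X_{r})$ is distributed as $(X_{0},\dots,X_{r-p})$ under $\prob_{r-p,h,\vartheta^{p}\omega}$, so $\sum_{a=p}^{r}X_{a}$ has the law of $1+L_{r-p}$ and the problem reduces to an exponential estimate for $|L_{m}-\rho(h)m|$ under $\prob_{m,h,\vartheta^{p}\omega}$, $m:=r-p$, uniform in the shift $p$. Here I would use the exponential Chebyshev bound $\prob_{m,h,\omega'}[L_{m}\ge cm]\le\ee^{-tcm}\,Z_{m,h+t}(\omega')/Z_{m,h}(\omega')$ for $t>0$, and its counterpart with $-t$ for the lower tail, where $t>0$ is small and $H'\subset(h_{c},+\infty)$ is a fixed compact set containing $H$ in its interior and containing $h\pm t$ for all $h\in H$. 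Writing $f(h+t)-f(h)=\int_{0}^{t}\rho(h+s)\,\dd s$ and using that $\rho$ is continuous and increasing on $H'$ (Theorem~\ref{th:Cinfty}), a short computation shows that, provided $\tfrac{1}{m}\log Z_{m,h\pm t}(\vartheta^{p}\omega)$ lies within a suitably small fixed $\delta_{0}>0$ of $f(h\pm t)$, both exponents are $\le-cm$ with a constant $c>0$ depending only on $H$ and $\epsilon$; hence $\prob_{m,h,\vartheta^{p}\omega}[|L_{m}-\rho(h)m|>\tfrac{\epsilon}{4}m]\le2\ee^{-cm}$ under that closeness.

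The core of the proof is to secure that closeness simultaneously over all shifts $\vartheta^{p}\omega$, $0\le p\le n$, and all relevant $m$. With $\Xi_{m}(\omega):=\sup_{\kappa\in H'}\big|\tfrac{1}{m}\log Z_{m,\kappa}(\omega)-f(\kappa)\big|$ I would prove $\probd[\Xi_{m}>\delta_{0}]\le C_{1}\ee^{-c_{1}m}$: the deterministic part $\sup_{\kappa\in H'}\big|\tfrac{1}{m}\Exd[\log Z_{m,\kappa}]-f(\kappa)\big|\to0$ is the $r=0$ case of Theorem~\ref{th:Cinfty}; the fluctuation about the mean is handled by the concentration inequality of Theorem~\ref{th:concentration} with $u=\delta_{0}m/3$; and the supremum over $\kappa$ is reduced to a net of polynomially many points, using that $\kappa\mapsto\log Z_{m,\kappa}$ (and $\kappa\mapsto\Exd[\log Z_{m,\kappa}]$) is $m$-Lipschitz while $f$ is $1$-Lipschitz. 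By stationarity of $\{\omega_{a}\}_{a\in\N_0}$, a union bound over $p\in\{0,\dots,n\}$ and $m\ge K_{0}\log n$ yields $\probd[\exists\,p,m:\Xi_{m}(\vartheta^{p}\omega)>\delta_{0}]\le n^{2}C_{1}n^{-c_{1}K_{0}}$, which is summable in $n$ once the fixed constant $K_{0}$ is large enough (depending on $\delta_{0}$, hence on $H$ and $\epsilon$, but not on $\{\zeta_{n}\}$); Borel--Cantelli then gives a full-measure set on which, for all large $n$, $\Xi_{m}(\vartheta^{p}\omega)\le\delta_{0}$ for every $0\le p\le n$ and every $m\ge(\zeta_{n}-2c_{0})\log n$, the latter exceeding $K_{0}\log n$ eventually because $\zeta_{n}\to\infty$.

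Intersecting this set with the one from Lemma~\ref{lem:max_uncond}, the estimates of the first two paragraphs combine to bound the sum above by $Cn\sum_{m\ge(\zeta_{n}-2c_{0})\log n}\ee^{-cm}=O\big(n^{1-c(\zeta_{n}-2c_{0})}\big)$, so $n^{\gamma}$ times the whole bound tends to $0$ as $n\uparrow\infty$, proving the theorem (the limit being in fact $0$). A single full-measure set valid for every compact $H\subset(h_{c},+\infty)$, all $\gamma,\epsilon>0$ and all sequences $\{\zeta_{n}\}$ is then obtained in the standard way: run the argument along a countable exhaustion of $(h_{c},+\infty)$ by compacts and along rational $\epsilon$, and note that the probability in question is non-increasing as $\epsilon$ and $\zeta_{n}$ grow while, for integer $\gamma$, the estimate with the larger exponent implies the one with any smaller exponent. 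I expect the main obstacle to be exactly the uniform-in-shift exponential control of $\Xi_{m}$: the interplay of Theorem~\ref{th:concentration}, the net argument in $\kappa$, and the Borel--Cantelli step with a threshold $K_{0}\log n$ independent of $\{\zeta_{n}\}$.
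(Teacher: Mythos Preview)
Your proof is correct and shares the paper's overall skeleton---bound the event $\{M_n>c_0\log n\}$ via Lemma~\ref{lem:max_uncond}, pass to nearest contacts, use the factorization \eqref{eq:ci-fact1} to reduce each block to a large-deviation event for $L_m$ under a shifted polymer measure, and control the latter by a Chernoff bound together with the concentration inequality of Theorem~\ref{th:concentration}---but the two proofs diverge in how the per-block estimate is made uniform over the shifts $\vartheta^p\omega$. The paper packages the Chernoff-plus-concentration step into Lemma~\ref{lemma:Ex_large_deviation}, obtaining a random variable $\Gamma_m$ with $\Exd[\Gamma_m]\le G\ee^{-\gamma m}$, and then applies Birkhoff's ergodic theorem to $\Lambda_s:=\sum_{m}\ee^{\gamma_s m/2}\Gamma_{s,m}$, so that $\sup_n n^{-1}\sum_{i=0}^{n-1}\Lambda_s(\vartheta^i\omega)<\infty$ almost surely; this automatically handles all shifts and all block lengths at once. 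You instead control the deviation $\Xi_m(\vartheta^p\omega)$ of the finite-volume free energy directly, via Theorem~\ref{th:concentration} plus a net in $\kappa$, and then a union bound over $(p,m)$ followed by Borel--Cantelli. Both routes are sound; the Birkhoff route is shorter and avoids the net argument (the paper's Lemma~\ref{lemma:Ex_large_deviation} covers $h\in H$ with a fixed finite family rather than an $m$-dependent net, since the change-of-measure bound absorbs a factor $\ee^{\delta m}$), while your route is more elementary and, because you invoke Lemma~\ref{lem:max_uncond} with exponent $\gamma+1$, actually yields the stronger conclusion that the $\limsup$ is zero. A minor cosmetic point: the net you need has $O(1)$ points, not polynomially many, since $\kappa\mapsto\tfrac{1}{m}\log Z_{m,\kappa}$ is $1$-Lipschitz (not $m$-Lipschitz after the $1/m$ normalization); this only strengthens your bound. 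Also note the paper goes \emph{outward} to contacts $i'\le i<j\le j'$ whereas you go inward to $i<p\le r\le j$; both reductions are valid on $\{M_n\le c_0\log n\}$ once $\zeta_n>2c_0$.
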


\medskip

We present a proof that relies on the following preliminary lemma.

\medskip


\begin{lemma}
  \label{lemma:Ex_large_deviation}
  For every compact set $H\subset(h_c,+\infty)$ and $\epsilon>0$ there
  exist constants $\gamma>0$ and $G>0$ such that for all $n\in\N$
  \begin{equation*}
   \sup_{h\in H}\prob_{n,h,\cdot}\big[|L_n-\rho(h)n|>\epsilon n\big]\le \Gamma_n
  \end{equation*}
  with a non-negative measurable function $\Gamma_n$ on $\Omega$ that
  satisfies $\Exd[\Gamma_n]\le G\ee^{-\gamma n}$.
\end{lemma}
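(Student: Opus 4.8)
The plan is to run a quenched Cram\'er--Chernoff bound on each tail of $L_n$ about $\rho(h)n$ and to convert the resulting partition-function ratios into a deterministic exponential decay plus a disorder fluctuation controlled by Theorem~\ref{th:concentration}. Fix a compact $H\subset(h_c,+\infty)$ and $\epsilon>0$. Since $\Ex_{n,h,\omega}[\ee^{tL_n}]=Z_{n,h+t}(\omega)/Z_{n,h}(\omega)$ for every $t\in\Rl$ (multiply the density of $\prob_{n,h,\omega}$ by $\ee^{tL_n}=\ee^{t\sum_{a=1}^nX_a}$), Markov's inequality gives, for $t>0$,
\[
\prob_{n,h,\omega}\big[L_n\ge\rho(h)n+\epsilon n\big]\le\exp\Big\{-t\big(\rho(h)+\epsilon\big)n+\log Z_{n,h+t}(\omega)-\log Z_{n,h}(\omega)\Big\}\,,
\]
and symmetrically, for $s>0$ small enough that $h-s>h_c$ for all $h\in H$,
\[
\prob_{n,h,\omega}\big[L_n\le\rho(h)n-\epsilon n\big]\le\exp\Big\{s\big(\rho(h)-\epsilon\big)n+\log Z_{n,h-s}(\omega)-\log Z_{n,h}(\omega)\Big\}\,.
\]
Write $g_n(h):=\log Z_{n,h}(\omega)-\Exd[\log Z_{n,h}]$. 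On the enlarged compact $H':=\{h'+u:h'\in H,\ u\in[-s,t]\}\subset(h_c,+\infty)$ the case $r=0$ of Theorem~\ref{th:Cinfty} gives $\sup_{h'\in H'}\big|\Exd[\log Z_{n,h'}]-nf(h')\big|=o(n)$, so the two exponents are at most $n\big[f(h+t)-f(h)-t\rho(h)-t\epsilon\big]+o(n)+2\sup_{H'}|g_n|$ and $n\big[f(h-s)-f(h)+s\rho(h)-s\epsilon\big]+o(n)+2\sup_{H'}|g_n|$. Since $f$ is $C^2$ on $(h_c,+\infty)$ with $v=\partial_h^2 f$ finite on $H'$ (Theorem~\ref{th:Cinfty}), the elementary bounds $f(h+t)-f(h)-t\rho(h)\le\tfrac12 t^2\sup_{H'}v$ and $f(h-s)-f(h)+s\rho(h)\le\tfrac12 s^2\sup_{H'}v$ hold uniformly in $h\in H$; so, fixing $t,s>0$ small enough (depending only on $H$ and $\epsilon$), the two deterministic brackets are $\le-\tfrac12 t\epsilon$ and $\le-\tfrac12 s\epsilon$, and there is $n_0=n_0(H,\epsilon)$ such that for $n\ge n_0$ and every $h\in H$
\[
\prob_{n,h,\omega}\big[|L_n-\rho(h)n|>\epsilon n\big]\le 2\exp\Big\{-\beta n+2\sup_{h'\in H'}|g_n(h')|\Big\}\,,\qquad \beta:=\tfrac14\min\{t,s\}\epsilon>0\,.
\]

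Next I control $\sup_{H'}|g_n|$ uniformly in $h$. Since $\partial_{h'}\log Z_{n,h'}(\omega)=\Ex_{n,h',\omega}[L_n]\in[0,n]$, and the same after averaging over $\probd$, the map $h'\mapsto g_n(h')$ is Lipschitz on $H'$ with constant at most $n$; hence for a deterministic net $h_1,\dots,h_M\in H'$ of mesh $\le\beta/8$ — a number $M=M(H,\epsilon)$ of points that does \emph{not} depend on $n$ — one gets $\sup_{H'}|g_n|\le\max_{i\le M}|g_n(h_i)|+\tfrac18\beta n$. The point to stress is that one must not pass to exponential moments of $g_n$ (which need not be bounded uniformly in $n$), but rather split on the event that all net values $|g_n(h_i)|$ are small. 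So define, for $n\ge n_0$,
\[
\Gamma_n:=2\,\ee^{-\frac12\beta n}+2\cdot\mathds{1}\Big\{\max_{1\le i\le M}|g_n(h_i)|>\tfrac18\beta n\Big\}\,,
\]
and $\Gamma_n:=2$ for $n<n_0$. On the complement of the indicator event the previous display gives $\prob_{n,h,\omega}[|L_n-\rho(h)n|>\epsilon n]\le2\ee^{-\frac12\beta n}$ for \emph{every} $h\in H$, so the supremum over $h$ is $\le 2\ee^{-\frac12\beta n}\le\Gamma_n(\omega)$; on that event the indicator already dominates the trivial bound $2$. Hence $\sup_{h\in H}\prob_{n,h,\cdot}[|L_n-\rho(h)n|>\epsilon n]\le\Gamma_n$ pointwise in $\omega$, for all $n$.

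Finally, Theorem~\ref{th:concentration} gives $\probd[|g_n(h_i)|>u]\le 2\ee^{-\kappa u^2/(n+u)}$, so with $u=\tfrac18\beta n$ and a union bound over the $M$ net points,
\[
\probd\Big[\max_{1\le i\le M}|g_n(h_i)|>\tfrac18\beta n\Big]\le 2M\exp\Big\{-\frac{\kappa(\beta/8)^2}{1+\beta/8}\,n\Big\}\,,
\]
whence $\Exd[\Gamma_n]\le 2\ee^{-\frac12\beta n}+4M\ee^{-\gamma_0 n}$ for $n\ge n_0$ with $\gamma_0:=\kappa(\beta/8)^2/(1+\beta/8)>0$; enlarging the prefactor to absorb the finitely many indices $n<n_0$ (where $\Gamma_n=2$) yields the asserted $\Exd[\Gamma_n]\le G\ee^{-\gamma n}$ with $\gamma:=\min\{\tfrac12\beta,\gamma_0\}$. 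The one genuinely delicate point is the uniformity in $h\in H$: it works precisely because the $C^2$ regularity of $f$ (Theorem~\ref{th:Cinfty}) lets the tilt amplitudes $t,s$ — and hence $M$ and $\beta$ — be chosen independently of $h$, while the merely $O(n)$ Lipschitz constant of $g_n$ allows an $O(1)$-point net to close the gap down to the deterministic exponential rate.
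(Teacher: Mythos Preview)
Your proof is correct and follows the same overall strategy as the paper: a Chernoff bound on $L_n$ via the identity $\Ex_{n,h,\omega}[\ee^{tL_n}]=Z_{n,h+t}(\omega)/Z_{n,h}(\omega)$, the concentration inequality of Theorem~\ref{th:concentration} to control the random fluctuation of $\log Z_{n,\cdot}$, and a reduction to finitely many reference values of $h$ to get uniformity over $H$.

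The packaging differs slightly. The paper first fixes $h$, proves $\Exd\big[\prob_{n,h,\cdot}[|L_n-\rho(h)n|>\epsilon n/2]\big]\le G_h\ee^{-2\gamma_h n}$ by splitting on the concentration events, and then passes to finitely many net points $h_1,\dots,h_r$ via the change-of-measure bound $\prob_{n,h,\omega}[\,\cdot\,]\le \ee^{|h-h_s|n}\prob_{n,h_s,\omega}[\,\cdot\,]$; its $\Gamma_n$ is the resulting sum of rescaled quenched probabilities. You instead carry the supremum over $h$ from the start, control the deterministic part uniformly via the $C^2$ Taylor expansion of $f$, and reduce $\sup_{H'}|g_n|$ to a finite maximum using the $n$-Lipschitz property of $h\mapsto g_n(h)$; your $\Gamma_n$ is an indicator of a bad concentration event plus a small deterministic exponential. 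Both routes rest on the same two inputs (Theorems~\ref{th:Cinfty} and~\ref{th:concentration}) and the same net idea; the paper's change-of-measure reduction and your Lipschitz reduction are really two faces of the same $n$-Lipschitz property of $\log Z_{n,\cdot}$.
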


\medskip

\begin{proof}[Proof Lemma \ref{lemma:Ex_large_deviation}]
  Fix $H\subset(h_c,+\infty)$ compact and $\epsilon>0$.  We shall show
  that for every $h\in H$ there exist constants $\gamma_h>0$ and
  $G_h>0$ such that for all $n\in\N$
  \begin{equation}
    \Exd\Big[\prob_{n,h,\cdot}\big[|L_n-\rho(h)n|>\epsilon n/2\big]\Big]\le G_h\ee^{-2\gamma_h n}\,.
    \label{lemma:Ex_large_deviation_0}
  \end{equation}
  This proves the lemma as follows.  Since $\rho:=\partial_hf$ is
  uniformly continuous on the compact set $H$ by Theorem
  \ref{th:Cinfty}, there exists $\delta_o>0$ such that
  $|\rho(h)-\rho(h')|\le\epsilon/2$ if $h,h'\in H$ satisfy
  $|h-h'|<2\delta_o$.  Put $\delta_h:=\min\{\gamma_h,\delta_o\}$ and
  note that we can find finitely many points $h_1,\ldots,h_r$ in $H$
  such that
  $H\subset\cup_{s=1}^r(h_s-\delta_{h_s},h_s+\delta_{h_s})$. For all
  $n\in\N$, $h\in(h_s-\delta_{h_s},h_s+\delta_{h_s})\cap H$, and
  $\omega\in\Omega$ we have
\begin{align}
  \nonumber
  \prob_{n,h,\omega}\big[|L_n-\rho(h)n|>\epsilon n\big]&\le\prob_{n,h,\omega}\big[|L_n-\rho(h_s)n|>\epsilon n/2\big]\\
  \nonumber
  &=\frac{\Ex[\mathds{1}_{\{|L_n-\rho(h_s)n|>\epsilon n/2\}}\ee^{\sum_{a=1}^n(h+\omega_a)X_a}X_n]}{\Ex[\ee^{\sum_{a=1}^n(h+\omega_a)X_a}X_n]}\\
  \nonumber
  &\le\ee^{\delta_{h_s}n}\prob_{n,h_s,\omega}\big[|L_n-\rho(h_s)n|>\epsilon n/2\big]\,,
\end{align}
so
\begin{equation*}
  \sup_{h\in H}\prob_{n,h,\omega}\big[|L_n-\rho(h)n|>\epsilon n\big]\le\sum_{s=1}^r\ee^{\delta_{h_s}n}\prob_{n,h_s,\omega}\big[|L_n-\rho(h_s)n|>\epsilon n/2\big]=:\Gamma_n(\omega)\,.
\end{equation*}
The function $\Gamma_n$ that maps $\omega\in\Omega$ to
$\Gamma_n(\omega)$ is non-negative and measurable and, due to
\eqref{lemma:Ex_large_deviation_0}, satisfies $\Exd[\Gamma_n]\le
G\ee^{-\gamma n}$ with
$\gamma:=\min\{\gamma_{h_1},\ldots,\gamma_{h_r}\}$ and
$G:=G_{h_1}+\cdots+G_{h_r}$.


\smallskip

\noindent\textit{The bound \eqref{lemma:Ex_large_deviation_0}}. Pick
$h\in H$ and let $\zeta>0$ be a number so small that
$f(h+\zeta)-f(h)-\rho(h)\zeta\le\epsilon\zeta/8$ and
$f(h-\zeta)-f(h)+\rho(h)\zeta\le\epsilon\zeta/8$, which exists because
$\rho(h):=\partial_h f(h)$. Put $c:=\epsilon\zeta/32$ for brevity. If
$\log Z_{n,h}(\omega)\ge \Exd[\log Z_{n,h}]-c n$ and $\log
Z_{n,h+\zeta}(\omega)\le\Exd[\log Z_{n,h+\zeta}]+c n$, then a
Chernoff--type bound gives
\begin{align}
    \nonumber
    \prob_{n,h,\omega}\big[L_n>\rho(h)n+\epsilon n/2\big]&\le \Ex_{n,h,\omega}\big[\ee^{\zeta L_n}\big]\ee^{-\rho(h)\zeta n+\epsilon\zeta n/2}\\
    \nonumber
    &=\frac{Z_{n,h+\zeta}(\omega)}{Z_{n,h}(\omega)}\ee^{-\rho(h)\zeta n+\epsilon\zeta n/2}\\
    \nonumber
  &\le\ee^{\Exd[\log Z_{n,h+\zeta}]-\Exd[\log Z_{n,h}]-\rho(h)\zeta n-\epsilon\zeta n/2+2c n}.
  \end{align}
This allows us to state that for all $n\in\N$ and $\omega\in\Omega$
\begin{align}
    \nonumber
    \prob_{n,h,\omega}\big[L_n>\rho(h)n+\epsilon n/2\big]&\le\mathds{1}_{\{\log Z_{n,h}(\omega)<\Exd[\log Z_{n,h}]-c n\}}+\mathds{1}_{\{\log Z_{n,h+\zeta}(\omega)>\Exd[\log Z_{n,h+\zeta}]+c n\}}\\
    \nonumber
    &\quad+\ee^{\Exd[\log Z_{n,h+\zeta}]-\Exd[\log Z_{n,h}]-\rho(h)\zeta n-\epsilon\zeta n/2+2c n}\,.
  \end{align}
Similarly, we can state that for all $n\in\N$ and $\omega\in\Omega$
   \begin{align}
    \nonumber
    \prob_{n,h,\omega}\big[L_n<\rho(h)n-\epsilon n/2\big]&\le\mathds{1}_{\{\log Z_{n,h}(\omega)<\Exd[\log Z_{n,h}]-\delta n\}}+\mathds{1}_{\{\log Z_{n,h-\zeta}(\omega)>\Exd[\log Z_{n,h-\zeta}]+\delta n\}}\\
    \nonumber
    &\quad+\ee^{\Exd[\log Z_{n,h-\zeta}]-\Exd[\log Z_{n,h}]+\rho(h)\zeta n-\epsilon\zeta n/2 +2c n}\,.
   \end{align}
Integrating with respect to $\probd[\dd\omega]$, appealing to the
concentration inequality given by Theorem \ref{th:concentration}, and
using that $\Exd[\log Z_{n,h+\zeta}]\le nf(h+\zeta)+cn$, $\Exd[\log
  Z_{n,h-\zeta}]\le nf(h-\zeta)+cn$, and $\Exd[\log Z_{n,h}]\ge
nf(h)-cn$ for all sufficiently large $n$, we get for those $n$
 \begin{equation}
    \Exd\Big[\prob_{n,h,\cdot}\big[L_n>\rho(h)n+\epsilon n/2\big]\Big]\le 4\ee^{-\frac{\kappa c^2}{1+c}n}+\ee^{[f(h+\zeta)-f(h)-\rho(h)\zeta-\epsilon\zeta/2 +4c] n}
\label{lemma:Ex_large_deviation_1}
 \end{equation}
and
 \begin{equation}
    \Exd\Big[\prob_{n,h,\cdot}\big[L_n<\rho(h)n-\epsilon n/2\big]\Big]\le 4\ee^{-\frac{\kappa c^2}{1+c}n}+\ee^{[f(h-\zeta)-f(h)+\rho(h)\zeta-\epsilon\zeta/2 +4c] n}\,.
\label{lemma:Ex_large_deviation_2}
 \end{equation}
 By construction we have $f(h+\zeta)-f(h)-\rho(h)\zeta-\epsilon\zeta/2
 +4c\le -\epsilon\zeta/4$ and
 $f(h-\zeta)-f(h)+\rho(h)\zeta-\epsilon\zeta/2 +4c\le
 -\epsilon\zeta/4$, so \eqref{lemma:Ex_large_deviation_1} and
 \eqref{lemma:Ex_large_deviation_2} show that
 \eqref{lemma:Ex_large_deviation_0} is demonstrated with
 $\gamma_h:=\min\{\kappa c^2/(1+c),\epsilon\zeta/4\}$ and some
 constant $G_h>0$.
\end{proof}

\medskip

\begin{proof}[Proof of Theorem \ref{th:small_scales}]
For $s\in\N$ define $h_s:=-s$ if $h_c=-\infty$ and $h_s:=h_c+1/s$ if
$h_c>-\infty$ and note that $h_s>h_c$,
$\lim_{s\uparrow\infty}h_s=h_c$, and
$\lim_{s\uparrow\infty}(h_s+2s)=+\infty$. According to Lemma
\ref{lemma:Ex_large_deviation} there exist constants $\gamma_s>0$ and
$G_s>0$ such that for all $j\in\N$
 \begin{equation}
 \sup_{h\in [h_s,h_s+2s]}\prob_{j,h,\cdot}\big[|L_j-\rho(h)j|>j/s\big]\le \Gamma_{s,j}
\label{eq:small_scales_1}
 \end{equation}
with a non-negative measurable function $\Gamma_{s,j}$ on $\Omega$
that satisfies $\Exd[\Gamma_{s,j}]\le G_s\ee^{-\gamma_s j}$.  The
random variable $\Lambda_s:=\sum_{j\in\N}\ee^{\gamma_s
  j/2}\,\Gamma_{s,j}$ possesses finite expectation, so Birkhoff's
ergodic theorem assures us that there exists a set
$\Omega_o\in\mathcal{F}$ with $\probd[\Omega_o]=1$ such that for all
$\omega\in\Omega_o$ and $s\in\N$
\begin{equation}
  \sup_{n\in\N}\,\frac{1}{n}\sum_{i=0}^{n-1}\Lambda_s(\vartheta^i\omega)<+\infty\,.
\label{eq:small_scales_2}
\end{equation}
According to Lemma \ref{lem:max_uncond} we can also suppose that
$\Omega_o$ has the following property: for every $\omega\in\Omega_o$,
compact set $H\subset(h_c,+\infty)$, and $\gamma>0$ there exists a
constant $c>0$ (independent of $\omega$) such that
\begin{equation}
  \adjustlimits\limsup_{n\uparrow\infty}\sup_{h\in H}\,n^\gamma\prob_{n,h,\omega}\big[M_n>c\log n\big]<+\infty\,.
\label{eq:small_scales_3}
\end{equation}

Fix $\omega\in\Omega$, $H\subset(h_c,+\infty)$ compact, and
$\gamma,\epsilon>0$. Also fix an arbitrary sequence
$\{\zeta_n\}_{n\in\N}$ of positive numbers such that
$\lim_{n\uparrow\infty}\zeta_n=+\infty$.  Let $c>0$ be as in
\eqref{eq:small_scales_3} and put $m_n:=\lfloor c\log n\rfloor$ for
brevity. Let $n_o$ be a number so large that $4(1+\epsilon)m_n+2\le
\epsilon\zeta_n\log n$ for all integers $n>n_o$.  For every integer
$n>n_o$ and $h\in H$ we have
\begin{align}
  \nonumber
  &\prob_{n,h,\omega}\Bigg[\max_{\substack{0\le i<j\le n\\j-i\ge \zeta_n\log n}}\bigg|\frac{1}{j-i}\sum_{a=i+1}^jX_a-\rho(h)\bigg|>\epsilon\Bigg]\\
  \nonumber
  &\qquad\le\prob_{n,h,\omega}[M_n>c\log n]\\
  &\qquad\quad+\sum_{i=0}^{n-1}\sum_{j=i+1}^n\mathds{1}_{\{j-i\ge\zeta_n\log n\}}\prob_{n,h,\omega}\bigg[\bigg|\frac{1}{j-i}\sum_{a=i+1}^jX_a-\rho(h)\bigg|>\epsilon,\,M_n\le m_n\bigg]\,.
\label{eq:small_scales_4}
\end{align}
Let us elaborate the terms in the double sum.  Under the conditions
$X_0=X_n=1$ and $M_n\le m_n$ we have the identities
\begin{equation*}
  \sum_{i'=\max\{0,i-m_n+1\}}^iX_{i'}\prod_{k=i'+1}^i(1-X_k)=\sum_{j'=j}^{\min\{j+m_n-1,n\}}\prod_{k=j}^{j'-1}(1-X_k)X_{j'}=1\,,
\end{equation*}
so for $0\le i<j\le n$ with $j-i\ge\zeta_n\log n$ we can write down
\begin{align}
  \nonumber
  &\prob_{n,h,\omega}\bigg[\bigg|\frac{1}{j-i}\sum_{a=i+1}^jX_a-\rho(h)\bigg|>\epsilon,\,M_n\le m_n\bigg]\\
  \nonumber
  &\qquad\le\sum_{i'=\max\{0,i-m_n+1\}}^i\sum_{j'=j}^{\min\{j+m_n-1,n\}}\Ex_{n,h,\omega}\Bigg[X_{i'}\prod_{k=i'+1}^i(1-X_k)\\
    \nonumber
    &\qquad\qquad\qquad\qquad\qquad\qquad\qquad\qquad\times\mathds{1}_{\big\{\big|\frac{1}{j-i}\sum_{a=i+1}^jX_a-\rho(h)\big|>\epsilon\big\}}\prod_{k=j}^{j'-1}(1-X_k)X_{j'}\Bigg]\\
  \nonumber
  &\qquad\le\sum_{i'=\max\{0,i-m_n+1\}}^i\sum_{j'=j}^{\min\{j+m_n-1,n\}}\Ex_{n,h,\omega}\bigg[X_{i'}
    \mathds{1}_{\big\{\big|\frac{1}{j-i}\left(\sum_{a=i'+1}^{j'-1}X_a+\mathds{1}_{\{j'=j\}}\right)-\rho(h)\big|>\epsilon\big\}}X_{j'}\bigg]\,.
\end{align}
Now we note that
$|\sum_{a=i'+1}^{j'-1}X_a+\mathds{1}_{\{j'=j\}}-\rho(h)(j-i)|>\epsilon(j-i)$
with $i-m_n<i'\le i$ and $j<j'\le j+m_n$ implies
$|\sum_{a=i'+1}^{j'}X_a-\rho(h)(j'-i')|>\epsilon(j'-i')-2(1+\epsilon)
m_n-1$, where we have used that $0\le \rho(h)\le 1$. But for the
integers $n>n_o$ the condition $j-i\ge\zeta_n\log n$ entails
$4(1+\epsilon)m_n+2\le\epsilon\zeta_n\log n\le
\epsilon(j-i)\le\epsilon(j'-i')$. In conclusion, we can state that for
$n>n_o$ and $0\le i<j\le n$ with $j-i\ge\zeta_n\log n$
\begin{align}
  \nonumber
  &\prob_{n,h,\omega}\bigg[\bigg|\frac{1}{j-i}\sum_{a=i+1}^jX_a-\rho(h)\bigg|>\epsilon,\,M_n\le m_n\bigg]\\
  \nonumber
  &\qquad\le\sum_{i'=\max\{0,i-m_n+1\}}^i\sum_{j'=j}^{\min\{j+m_n-1,n\}}\Ex_{n,h,\omega}\bigg[X_{i'}
    \mathds{1}_{\big\{\big|\sum_{a=i'+1}^{j'}X_a-\rho(h)(j'-i')\big|>\frac{\epsilon}{2}\epsilon(j'-i')\big\}}X_{j'}\bigg]\,.
\end{align}
At this point, two applications of \eqref{eq:ci-fact2} give
\begin{align}
  \nonumber
  &\prob_{n,h,\omega}\bigg[\bigg|\frac{1}{j-i}\sum_{a=i+1}^jX_a-\rho(h)\bigg|>\epsilon,\,M_n\le m_n\bigg]\\
  \nonumber
   &\qquad\le\sum_{i'=\max\{0,i-m_n+1\}}^i\sum_{j'=j+1}^{\min\{j+m_n,n\}}\prob_{j'-i',h,\vartheta^{i'}\omega}\Big[\big|L_{j'-i'}-\rho(h)(j'-i')\big|>\epsilon(j'-i')/2\Big]\,.
\end{align}
Combining this bound with \eqref{eq:small_scales_4} we finally deduce
that for all integers
$n>n_o$ and $h\in H$
\begin{align}
  \nonumber
  &\prob_{n,h,\omega}\Bigg[\max_{\substack{0\le i<j\le n\\j-i\ge \zeta_n\log n}}\bigg|\frac{1}{j-i}\sum_{a=i+1}^jX_a-\rho(h)\bigg|>\epsilon\Bigg]\\
  \nonumber
  &\qquad\le\prob_{n,h,\omega}[M_n>c\log n]\\
  &\qquad\quad+m_n^2\sum_{i=0}^{n-1}\sum_{j\in\N}\mathds{1}_{\{j\ge\zeta_n\log n\}}\prob_{j,h,\vartheta^i\omega}\Big[\big|L_j-\rho(h)j\big|>\epsilon j/2\Big]\,.
\label{eq:small_scales_5}
\end{align}

For the last step we pick $s\in\N$ so large that
$H\subseteq[h_s,h_s+2s]$ and $\epsilon/2\ge 1/s$. Bounds
\eqref{eq:small_scales_5} and \eqref{eq:small_scales_1} show that for
all integers $n>n_o$
\begin{align}
  \nonumber
  &\sup_{h\in H}\prob_{n,h,\omega}\Bigg[\max_{\substack{0\le i<j\le n\\j-i\ge \zeta_n\log n}}\bigg|\frac{1}{j-i}\sum_{a=i+1}^jX_a-\rho(h)\bigg|>\epsilon\Bigg]\\
  \nonumber
  &\qquad\le\sup_{h\in H}\prob_{n,h,\omega}[M_n>c\log n]+m_n^2\sum_{i=0}^{n-1}\sum_{j\in\N}\mathds{1}_{\{j\ge\zeta_n\log n\}}\Gamma_{s,j}(\vartheta^i\omega)\,,
\end{align}
and a Chernoff--type bound, together with the definition
$\Lambda_s:=\sum_{j\in\N}\ee^{\gamma_s j/2}\,\Gamma_{s,j}$, yields
\begin{align}
  \nonumber
  &\sup_{h\in H}\prob_{n,h,\omega}\Bigg[\max_{\substack{0\le i<j\le n\\j-i\ge \zeta_n\log n}}\bigg|\frac{1}{j-i}\sum_{a=i+1}^jX_a-\rho(h)\bigg|>\epsilon\Bigg]\\
  \nonumber
  &\qquad\le\sup_{h\in H}\prob_{n,h,\omega}[M_n>c\log n]+m_n^2\ee^{-\frac{1}{2}\gamma_s\zeta_n\log n}\sum_{i=0}^{n-1}\Lambda_s(\vartheta^i\omega)\,.
\end{align}
From here we find
 \begin{equation*}
    \adjustlimits\lim_{n\uparrow\infty}\sup_{h\in H}n^\gamma\,
    \prob_{n,h,\omega}\Bigg[\max_{\substack{0\le i<j\le n\\j-i\ge \zeta_n\log n}}\bigg|\frac{1}{j-i}\sum_{a=i+1}^jX_a-\rho(h)\bigg|>\epsilon\Bigg]<+\infty
 \end{equation*}
 thanks to \eqref{eq:small_scales_2} and \eqref{eq:small_scales_3} and
 thanks to the fact that $\lim_{n\uparrow\infty}\zeta_n=+\infty$.
\end{proof}

\section{Generalizations}
\label{sec:generalizations}

\subsection{The soft conditioning case}

The findings about sharp conditioning, i.e., Proposition
\ref{prop:no_disorder} and Theorem \ref{th:main}, allow easily to
obtain similar results for a soft conditioning that restores the role
of the parameter $h$. Here we discuss this issue for the disordered
model on the basis of Proposition \ref{th:LD}, which is used to
restrict the range of the contact number, and Theorem
\ref{th:main}. The pure model is not considered for conciseness,
 but the same arguments based on Propositions \ref{th:LD} and
\ref{prop:no_disorder} apply to it. The following proposition
exemplifies two forms of soft conditioning.

\medskip

\begin{proposition}
  \label{prop:soft}
  Suppose that $\int_\Omega\omega_0^2\,\probd[\dd\omega]>0$. The
  following property holds for $\pae$: for every $h\in\Rl$ and
  $r\in(0,1)$ there exists a constant $c>0$ (independent of $\omega$)
  such that
  \begin{equation*}
    \lim_{n\uparrow\infty} \prob_{n,h,\omega}\big[M_n>c\log n\big| L_n \ge r n\big]=0
  \end{equation*}
  and, when $h>h_c$,
  \begin{equation*}
    \lim_{n\uparrow\infty}\prob_{n,h,\omega}\big[M_n>c\log n\big| L_n \le r n\big]=0\,.
  \end{equation*}
\end{proposition}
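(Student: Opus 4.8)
The plan is to decompose each conditional probability over the values of the contact number, isolating a \emph{bulk} range in which $L_n/n$ lies in a closed subinterval of $(0,1)$ — where Theorem~\ref{th:main} applies directly — from a \emph{tail} range adjacent to the excluded endpoints, which I control with the large deviation estimates of Proposition~\ref{th:LD} together with the strict convexity of $I_h$ from Corollary~\ref{th:strict_convexity}. Let $\rho^\star$ be the unique minimizer of $I_h$ on $[0,1]$, equal to $\rho(h)$ when $h>h_c$ and to $0$ when $h\le h_c$; by Corollary~\ref{th:strict_convexity}, $I_h>0$ on $[0,1]\setminus\{\rho^\star\}$, with $I_h$ strictly decreasing on $[0,\rho^\star]$ and strictly increasing on $[\rho^\star,1]$. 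I work with a realization $\omega$ lying in the full-probability set on which both Theorem~\ref{th:main} and Proposition~\ref{th:LD} hold for every $h$.

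For the first assertion I write
\begin{equation*}
\prob_{n,h,\omega}\big[M_n>c\log n\,\big|\,L_n\ge rn\big]=\frac{\sum_{l\ge rn}\prob_{n,h,\omega}[M_n>c\log n,\,L_n=l]}{\prob_{n,h,\omega}[L_n\ge rn]}\,,
\end{equation*}
fix $\epsilon\in\big(0,\min\{1-r,\,1-\rho^\star\}\big)$, and split the numerator at $l=(1-\epsilon)n$. On the bulk range $rn\le l\le(1-\epsilon)n$ I use $\prob_{n,h,\omega}[M_n>c\log n,\,L_n=l]=\prob_{n,h,\omega}[M_n>c\log n\,|\,L_n=l]\,\prob_{n,h,\omega}[L_n=l]$, bound the first factor by $\sup_{l\in nR\cap\N}\prob_{n,h,\omega}[M_n>c\log n\,|\,L_n=l]$ with $R:=[r,1-\epsilon]\subset(0,1)$ closed, and note that $\sum_{rn\le l\le(1-\epsilon)n}\prob_{n,h,\omega}[L_n=l]\le\prob_{n,h,\omega}[L_n\ge rn]$; hence this part of the ratio is at most that supremum, which Theorem~\ref{th:main} drives to $0$ for a suitable $c=c(h,r)$ independent of $\omega$. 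On the tail range $l>(1-\epsilon)n$ I bound $\prob_{n,h,\omega}[M_n>c\log n,\,L_n=l]\le\prob_{n,h,\omega}[L_n=l]$, so the tail contributes $\prob_{n,h,\omega}[L_n>(1-\epsilon)n]/\prob_{n,h,\omega}[L_n\ge rn]$. Proposition~\ref{th:LD} applied to the closed set $[1-\epsilon,+\infty)$ gives $\limsup_n n^{-1}\log\prob_{n,h,\omega}[L_n>(1-\epsilon)n]\le-I_h(1-\epsilon)$, and applied to the open set $(r,+\infty)$ it gives $\liminf_n n^{-1}\log\prob_{n,h,\omega}[L_n\ge rn]\ge-\inf_{s\in(r,1]}I_h(s)$, the last infimum being $I_h(r)$ if $r\ge\rho^\star$ and $0$ if $r<\rho^\star$; since $\epsilon<1-r$ forces $\rho^\star\le r<1-\epsilon$ in the former subcase, in both subcases the tail rate $I_h(1-\epsilon)$ is strictly larger, so the tail part of the ratio tends to $0$ at an exponential rate. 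This proves the first assertion.

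The second assertion is entirely parallel, conditioning on $L_n\le rn$ and splitting $\sum_{1\le l\le rn}$ at $l=\epsilon n$ with $\epsilon\in\big(0,\min\{r,\,\rho(h)\}\big)$ (recall $L_n\ge X_n=1$ $\prob_{n,h,\omega}$-almost surely). The bulk range $\epsilon n\le l\le rn$ is handled by Theorem~\ref{th:main} with $R:=[\epsilon,r]\subset(0,1)$, exactly as above, and on the tail range $l<\epsilon n$ the quotient $\prob_{n,h,\omega}[L_n<\epsilon n]/\prob_{n,h,\omega}[L_n\le rn]$ decays exponentially because Proposition~\ref{th:LD} yields $\limsup_n n^{-1}\log\prob_{n,h,\omega}[L_n<\epsilon n]\le-I_h(\epsilon)$ and $\liminf_n n^{-1}\log\prob_{n,h,\omega}[L_n\le rn]\ge-\inf_{s\in[0,r)}I_h(s)$, while $I_h(\epsilon)>\inf_{s\in[0,r)}I_h(s)$: this last inequality rests on $\epsilon<\rho(h)$, which makes $I_h(\epsilon)>0$, so the hypothesis $h>h_c$ is used here precisely to ensure $\rho(h)>0$. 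One finally takes $c$ to be the larger of the two constants provided by Theorem~\ref{th:main}, both independent of $\omega$.

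The only genuine difficulty is that Theorem~\ref{th:main} controls conditional probabilities only for closed subsets of the open interval $(0,1)$, so the ranges of $L_n/n$ close to $0$ and close to $1$ have to be treated by hand; the resolution is that on those ranges $L_n/n$ is in a large-deviation regime of strictly positive rate — here the strict convexity of $I_h$, hence ultimately Theorem~\ref{th:smoothing}, is indispensable — whereas, by the large deviation lower bound, the conditioning event $\{L_n\ge rn\}$ (respectively $\{L_n\le rn\}$) has a strictly smaller rate, so those ranges form an exponentially negligible fraction of it. Notably one never needs a lower bound of the form $\liminf_n\prob_{n,h,\omega}[L_n\ge rn]>0$, which would fail precisely in the borderline case $r=\rho(h)$.
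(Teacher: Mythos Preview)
Your proof is correct and follows essentially the same route as the paper's: split the range of $L_n$ into a bulk interval $R\subset(0,1)$ where Theorem~\ref{th:main} applies, and a tail near the relevant endpoint whose contribution is killed by comparing LDP rates via Proposition~\ref{th:LD} and the strict convexity of $I_h$ (Corollary~\ref{th:strict_convexity}). The paper parametrizes the cutoff as a point $r_\pm$ with $I_h(r_\pm)>I_h(r)$ rather than as $1-\epsilon$ or $\epsilon$, but the arguments are otherwise identical; one small remark is that in the second assertion your clause ``which makes $I_h(\epsilon)>0$'' covers only the case $r\ge\rho(h)$, whereas for $r<\rho(h)$ you actually need $I_h(\epsilon)>I_h(r)$, which nonetheless follows immediately from $\epsilon<r<\rho(h)$ and the strict decrease of $I_h$ on $[0,\rho(h)]$.
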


\medskip

\begin{proof}[Proof of Proposition \ref{prop:soft}]
According to Proposition \ref{th:LD} and Theorem \ref{th:main} there
exists a set $\Omega_o\in\mathcal{F}$ with $\probd[\Omega_o]=1$ such
that the following properties hold for every $\omega\in\Omega_o$,
$h\in\Rl$, $G\subseteq\Rl$ open, $F\subseteq\Rl$ closed,
$R\subset(0,1)$ closed, and some constant $c>0$ depending only on $R$:
  \vspace{0.1cm}
  \begin{enumerate}[itemsep=0.3em]
 \item    $\displaystyle{\liminf_{n\uparrow\infty} \frac{1}{n}\log\prob_{n,h,\omega}\bigg[\frac{L_n}{n}\in G\bigg] \ge-\inf_{r\in G} I_h(r)\,;}$
\item $\displaystyle{\limsup_{n\uparrow\infty} \frac{1}{n}\log\prob_{n,h,\omega}\bigg[\frac{L_n}{n}\in F\bigg] \le-\inf_{r\in F} I_h(r)\,;}$  
\item $\displaystyle{\adjustlimits\limsmallspace_{n\uparrow\infty}\sup_{l\in nR\cap\N}\,\prob_{n,h,\omega}\big[M_n>c\log n\big| L_n = l\big]=0\,.}$
  \end{enumerate}
  \vspace{0.1cm}
  Moreover, Theorem \ref{th:smoothing} yields $\rho_c=0$, as
  $\int_\Omega\omega_0^2\,\probd[\dd\omega]>0$, and formula
  \eqref{eq:I_h_esplicita} shows that $\partial_r
  I_h(r)=\imath_\rho(r)-h$ for all $r\in(0,1)$, so $I_h$ is strictly
  increasing on $[0,1]$ if $h\le h_c$, whereas it is strictly
  decreasing on $[0,\rho(h)]$ and strictly increasing on $[\rho(h),1]$
  if $h>h_c$.

Fix $\omega\in\Omega_o$, $h\in\Rl$, and $r\in(0,1)$. The shape of the
graph of $I_h$ assures us that there exists $r_+\in(r,1)$ such that
$I_h(r_+)>I_h(r)$ and, when $h>h_c$, that there exists $r_-\in(0,r)$
such that $I_h(r_-)>I_h(r)$. Thus, Properties $(1)$ and $(2)$ give
\begin{equation}
  \lim_{n\uparrow\infty}\frac{\prob_{n,h,\omega}[L_n\ge r_+]}{\prob_{n,h,\omega}[L_n>r]}=0
  \label{eq:soft_1}
\end{equation}
and, when $h>h_c$,
\begin{equation}
\lim_{n\uparrow\infty}\frac{\prob_{n,h,\omega}[L_n\le r_-]}{\prob_{n,h,\omega}[L_n<r]}=0\,.
\label{eq:soft_2}
\end{equation}
If $h>h_c$, then let $c>0$ be the constant that Property $(3)$
associates with $R_+:=[r,r_+]\subset(0,1)$, whereas if $h\le h_c$, then let
$c>0$ be the maximum between the constants that Property $(3)$
associates with $R_+$ and $R_-:=[r_-,r]\subset(0,1)$. For all $n$ we
have
\begin{align}
  \nonumber
  &\prob_{n,h,\omega}\big[M_n>c\log n\big|L_n\ge r n\big]\\
  \nonumber
  &\qquad\le \frac{\prob_{n,h,\omega}[M_n>c\log n,\, L_n/n\in R_+]}{{\prob_{n,h,\omega}[L_n\ge r]}}
  +\frac{\prob_{n,h,\omega}\big[L_n\ge r_+n\big]}{\prob_{n,h,\omega}[L_n>r]}\\
  \nonumber
  &\qquad\le \sup_{l\in nR_+\cap\N}\,\prob_{n,h,\omega}\big[M_n>c\log n\big|L_n=l\big]+\frac{\prob_{n,h,\omega}\big[L_n\ge r_+n\big]}{\prob_{n,h,\omega}[L_n>r]}\,,
\end{align}
and, similarly when $h>h_c$,
\begin{align}
  \nonumber
  &\prob_{n,h,\omega}\big[M_n>c\log n\big|L_n\le r n\big]\\
  \nonumber
  &\qquad\le \sup_{l\in nR_-\cap\N}\,\prob_{n,h,\omega}\big[M_n>c\log n\big|L_n=l\big]+\frac{\prob_{n,h,\omega}\big[L_n\le r_-n\big]}{\prob_{n,h,\omega}[L_n<r]}\,.
\end{align}
These bounds prove that $\lim_{n\uparrow\infty}
\prob_{n,h,\omega}[M_n>c\log n| L_n \ge r n]=0$ and, when $h>h_c$,
$\lim_{n\uparrow\infty}\prob_{n,h,\omega}[M_n>c\log n| L_n \le r
  n]=0$ thanks to \eqref{eq:soft_1} and \eqref{eq:soft_2} and thanks
to Property $(3)$.
\end{proof}

\medskip

\subsection{Circular DNA model(s)}
Let $U$ be a real function on the interval $(0,1]$, and for $n\in\N$,
$h\in\Rl$, and $\omega:=\{\omega_a\}_{a\in\N_0}\in\Omega$ consider the
model
\begin{equation}
\label{eq:U-model}
  \frac{\dd\prob_{n,\omega}^U}{\dd\prob}:=\frac{1}{Z_n^U(\omega)}\ee^{nU(L_n/n)+\sum_{i=1}^{L_n}\omega_{S_i}}\mathds{1}_{\{n\in S\}}\,,
\end{equation}
where of course
$Z_n^U(\omega):=\Ex[\ee^{nU(L_n/n)+\sum_{i=1}^{L_n}\omega_{S_i}}\mathds{1}_{\{n\in S
    \}}]$. We recover the pinning model with parameter $h\in\Rl$ when
$U(r)=hr$ and \eqref{eq:U-model} may be seen as a natural
\emph{nonlinear} generalization of this model. Note that the nonlinear
dependence is on the contact density, computed over the whole length
of the polymer, and this term has therefore a nonlocal nature. The
model \eqref{eq:U-model} has been introduced in the biophysics
literature for special cases of the potential $U$ to study the
denaturation for circular DNA (see, e.g., \cite{cf:bar2,giacomin2020}
and references therein).  The fact that the two DNA strands are in a
ring form, coupled with the double helix form, makes the opening of
the two strands a complex and nonlocal phenomenon, because the two
strands are entangled: it is what we experience when we try to
disentangle two ropes. This issue is tackled in several applied
science publications and we refer to \cite{giacomin2020} for a
discussion in a more mathematical spirit.

\smallskip

The following proposition describes the maximal contact gap in the
generalized disordered model under the assumption that there exists a
closed set $R\subset(0,1)$ such that for $\pae$
\begin{equation}
  \lim_{n\uparrow\infty} \prob^U_{n,\omega}\bigg[\frac{L_n}{n}\in R\bigg]=1\,. 
\label{eq:HyU}
\end{equation}
For instance, this is the case if there exist $h>h_c$, an open set
$G\subset(0,1)$ containing $\rho(h)$, and a closed set $F\subset(0,1)$
such that
\begin{equation}
 \sup_{r\in (0,1]\setminus F}\big\{U(r)-hr\big\}\le\infp_{r\in G~~}\!\!\!\!\big\{U(r)-hr\big\}\,.
\label{eq:HyU_example}
\end{equation}
In fact, shifting $U$ by $h$ one can work with a reference measure
corresponding to a localized pinning model whose contact density is in
$G$ with full probability at large $n$, and then one can take
$R:=F$. The potentials $U$ involved in \cite{giacomin2020} satisfy
\eqref{eq:HyU_example}.

\medskip

\begin{proposition}
  \label{prop:no_bigjump_disorder_generalized}
  Suppose that $\int_\Omega\omega_0^2\,\probd[\dd\omega]>0$ and that
  \eqref{eq:HyU} holds. There exists a constant $c>0$ such that for
  $\pae$
  \begin{equation*}
    \lim_{n\uparrow\infty}\prob^U_{n,\omega}\big[M_n\le c\log n\big]=1\,.
  \end{equation*}
\end{proposition}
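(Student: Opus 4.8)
The plan is to reduce the statement to Theorem~\ref{th:main} via the elementary observation that conditioning on the contact number trivializes the nonlocal term $nU(L_n/n)$ in \eqref{eq:U-model}, exactly as it trivializes the linear term $hL_n$ in the ordinary pinning model. Indeed, for every $n\in\N$, $\omega\in\Omega$, and $l\in\{1,\ldots,n\}$ with $\prob^U_{n,\omega}[L_n=l]>0$, a direct computation from \eqref{eq:U-model} gives
\[
\prob^U_{n,\omega}\big[\,\cdot\,\big|\,L_n=l\big]=\prob_{n,h,\omega}\big[\,\cdot\,\big|\,L_n=l\big]\qquad\text{for every }h\in\Rl,
\]
since the factor $\ee^{nU(l/n)}$ (respectively $\ee^{hl}$) cancels between numerator and denominator in the conditional probability, leaving $\Ex[\,\cdot\,\mathds{1}_{\{L_n=l\}}\ee^{\sum_{a=1}^n\omega_aX_a}X_n]/\Ex[\mathds{1}_{\{L_n=l\}}\ee^{\sum_{a=1}^n\omega_aX_a}X_n]$ in both cases. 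In particular the conditional law of the maximal gap $M_n$ given $L_n=l$ is the same object in the generalized model and in the pinning model, and Theorem~\ref{th:main} applies to it.

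First I would fix the closed set $R\subset(0,1)$ provided by hypothesis~\eqref{eq:HyU} and, enlarging it if necessary, assume $R=[\delta,1-\delta]$ with $\delta\in(0,1/2)$. Then for all sufficiently large $n$ every $l\in nR\cap\N$ satisfies $2\le l\le n-1$, and since $p(t)>0$ for all $t$ by Assumption~\ref{assump:p} we have $\prob^U_{n,\omega}[L_n=l]>0$, so the conditioning above is legitimate for the relevant $l$. By Theorem~\ref{th:main} applied to this $R$ there exist a constant $c>0$, independent of $\omega$, together with a set of charges of full $\probd$-probability on which $\lim_{n\uparrow\infty}\sup_{l\in nR\cap\N}\prob_{n,h,\omega}[M_n>c\log n\,|\,L_n=l]=0$. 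Intersecting this set with the full-probability set on which \eqref{eq:HyU} holds produces a single set of $\omega$ of full probability on which both facts are available simultaneously.

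On this set, for $n$ large,
\begin{align*}
\prob^U_{n,\omega}\big[M_n>c\log n\big]
&\le\prob^U_{n,\omega}\Big[\tfrac{L_n}{n}\notin R\Big]+\sum_{l\in nR\cap\N}\prob^U_{n,\omega}\big[M_n>c\log n\,\big|\,L_n=l\big]\,\prob^U_{n,\omega}\big[L_n=l\big]\\
&\le\prob^U_{n,\omega}\Big[\tfrac{L_n}{n}\notin R\Big]+\sup_{l\in nR\cap\N}\prob_{n,h,\omega}\big[M_n>c\log n\,\big|\,L_n=l\big]\,,
\end{align*}
where in the first step I split on $\{L_n/n\in R\}$ and in the second I used $\sum_{l}\prob^U_{n,\omega}[L_n=l]\le1$ together with the identity of conditional laws. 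Both terms on the right tend to $0$ as $n\uparrow\infty$: the first by \eqref{eq:HyU}, the second by Theorem~\ref{th:main}. Hence $\lim_{n\uparrow\infty}\prob^U_{n,\omega}[M_n\le c\log n]=1$, which is the assertion.

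There is no real obstacle here: the entire content is the reduction to sharp conditioning, and the only point requiring attention is that hypothesis~\eqref{eq:HyU} confines $L_n/n$ to a closed subset of the \emph{open} interval $(0,1)$, which is precisely what makes Theorem~\ref{th:main} applicable with a single $\omega$-independent $c$ uniformly over all admissible $l$. Were $R$ permitted to reach $0$ or $1$ this uniformity, and with it the argument, would break down, matching the standing restriction $R\subset(0,1)$ in Theorems~\ref{th:main} and \ref{th:smallscales}.
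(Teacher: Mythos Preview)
Your proof is correct and follows essentially the same approach as the paper: both reduce to Theorem~\ref{th:main} by observing that the conditional law given $L_n=l$ is the same in the generalized model and in the ordinary pinning model, then split on the event $\{L_n/n\in R\}$ and bound the conditional piece by the supremum over $l\in nR\cap\N$. The paper simply takes $h=0$ in the conditional identity rather than a generic $h$, and does not bother to enlarge $R$ to an interval, but these are cosmetic differences.
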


\medskip

\begin{proof}[Proof of Proposition \ref{prop:no_bigjump_disorder_generalized}]
Let $R\subset(0,1)$ be as in \eqref{eq:HyU}. Theorem \ref{th:main}
assures us that there exists a constant $c>0$ such that for $\pae$
 \begin{equation*}
    \adjustlimits\limsmallspace_{n\uparrow\infty} \sup_{l\in nR\cap\N}\prob_{n,0,\omega}\Big[M_n>c\log n\,\Big| L_n = l\Big]=0\,.
  \end{equation*}
  On the other hand, for all $n\in\N$ and $\omega\in\Omega$ we have
\begin{equation*}
 \prob^U_{n,\omega}\big[M_n>c\log n\big]\le \prob^U_{n,\omega}\bigg[\frac{L_n}{n}\notin R\bigg] +\prob^U_{n,\omega}\bigg[M_n>c\log n,\frac{L_n}{n}\in R\bigg] 
\end{equation*}
with 
\begin{align}
  \nonumber
  \prob^U_{n,\omega}\bigg[M_n>c\log n,\frac{L_n}{n}\in R\bigg] &=\sum_{l\in nR\cap\N}\prob_{n,0,\omega}\Big[M_n>c\log n\Big|L_n=l\Big]\prob^U_{n,\omega}[L_n=l]\\
  \nonumber
  &\le \sup_{l\in nR\cap\N}\prob_{n,0,\omega}\Big[M_n>c\log n\Big|L_n=l\Big]\,.
\qedhere
\end{align}
\end{proof}

\medskip

\appendix

\section{Proof of Proposition \ref{prop:no_disorder}}
\label{proof:no disorder}

Given integers $n\ge 1$ and $l\in\{1,\ldots,n\}$, the conditions
$L_n=l$ and $S:=\{S_i\}_{i\in\N_0}\ni n$ are tantamount to $S_l=n$, so
we can state that for all $h\in\Rl$
\begin{align}
  \nonumber
  \prob_{n,h,0}\big[M_n\in\cdot\,\big|L_n=l\big]&=\frac{\prob_{n,h,0}[M_n\in\cdot\,,\,L_n=l]}{\prob_{n,h,0}[L_n=l]}\\
  \nonumber
  &=\frac{\prob[M_n\in\cdot\,,\,S_l=n]}{\prob[S_l=n]}=\prob\big[M_n\in\cdot\,\big|S_l=n\big]\,.
\end{align}
We shall refer to the last expression when computing conditional
probabilities for the pure model. Moreover, given two sequences
$\{a_n\}_{n\in\N}$ and $\{b_n\}_{n\in\N}$ of positive real numbers,
below we write $a_n\sim b_n$ if the two sequences are asymptotically
equivalent, i.e., if $\lim_{n\uparrow\infty}a_n/b_n=1$.

\smallskip

\noindent \textit{Proof of part (i).}  Fix a closed set
$R\subset(\rho_c,1)$ and a number $\epsilon>0$. For every $n\in\N$ let
$l_n\in nR\cap\N$ be such that
\begin{equation*}
  \sup_{l\in nR\cap\N}\,\prob\bigg[\bigg|\frac{M_n}{\log n}-\frac{1}{f\circ\imath_\rho(l/n)}\bigg|>\epsilon\bigg|S_l=n\bigg]=
  \prob\bigg[\bigg|\frac{M_n}{\log n}-\frac{1}{f\circ\imath_\rho(l_n/n)}\bigg|>\epsilon\bigg|S_{l_n}=n\bigg]\,.
\end{equation*}
The number $l_n$ exists because $nR\cap\N$ is a finite set. Thus,
putting $\xi_n:=f\circ\imath_\rho(l_n/n)$ for brevity, part $(i)$ of
the proposition is demonstrated if we show that
\begin{equation}
  \label{eq:big-jump-1}
  \lim_{n\uparrow\infty}\prob\big[M_n\le(1/\xi_n+\epsilon)\log n\big|S_{l_n}=n\big]=1
\end{equation}
and
\begin{equation}
  \label{eq:big-jump-2}
   \lim_{n\uparrow\infty}\prob\big[M_n<(1/\xi_n-\epsilon)\log n\big|S_{l_n}=n\big]=0\,.
\end{equation}
We stress that $\rho_c<\inf R\le l_n/n\le \sup R<1$ and that
$0<f\circ\imath_\rho(\inf R)\le\xi_n\le f\circ\imath_\rho(\sup R)<+\infty$ for
all $n\in\N$.

The proof of \eqref{eq:big-jump-1} and \eqref{eq:big-jump-2} relies on
the following asymptotic equivalence, which holds for any real
sequence $\{\lambda_n\}_{n\in\N}$ that increases to infinity fast
enough to assure that
$\lim_{n\uparrow\infty}\sqrt{n}\,\Ex[\mathds{1}_{\{T_1>\lambda_n\}}T_1\ee^{-\xi_nT_1}]=0$:
\begin{equation}
 \prob\big[M_n\le\lambda_n\big|S_{l_n}=n\big]
  \sim\bigg(1-\frac{\Ex[\mathds{1}_{\{T_1>\lambda_n\}} \ee^{-\xi_n T_1}]}{\Ex[\ee^{-\xi_n T_1}]}\bigg)^{\!l_n}\,.
     \label{eq:big-jump-3}
\end{equation}
In order to verify \eqref{eq:big-jump-3}, for $n\in\N$ consider the
probability mass functions $p_n$ and $q_n$ on $\N$ defined by
\begin{equation*}
    p_n(t):=\frac{\ee^{-\xi_n t}p(t)}{\Ex[\ee^{-\xi_nT_1}]}
\end{equation*}
and
  \begin{equation*}
    q_n(t):=\frac{\mathds{1}_{\{t\le \lambda_n\}}\ee^{-\xi_n t}p(t)}{\Ex[\mathds{1}_{\{T_1\le \lambda_n\}} \ee^{-\xi_nT_1}]}
\end{equation*}
  for all $t\in\N$. Since $0<f\circ\imath_\rho(\inf R)\le\xi_n\le
  f\circ\imath_\rho(\sup R)<+\infty$, all moments of $p_n$ and $q_n$ exist
  and they are uniformly bounded with respect to $n$.  The identity
  $\rho(h)=\Ex[\ee^{-f(h)T_1}]/\Ex[T_1\ee^{-f(h)T_1}]$ for $h>h_c$
  allows one to deduce that the mean of $p_n$ is $n/l_n$. We denote by
  $v_n$ the variance of $p_n$ and by $m_n$ and $w_n$ the mean and the
  variance of $q_n$, respectively.  One can easily verify that
  $\liminf_{n\uparrow\infty}v_n>0$ and
  $\liminf_{n\uparrow\infty}w_n>0$, so the local CLT for triangular
  arrays of integer random variables, which can be obtained by
  combining the Lindeberg--Feller CLT (see \cite[Theorem
    27.2]{billingsley1986}) with \cite[Theorem 1.2]{davis1995},
  applies to random variables distributed according to $p_n$ and
  $q_n$, giving
  \begin{equation*}
  \adjustlimits\lim_{n\uparrow\infty}\sup_{t\in\N}\bigg|\sqrt{2\pi l_n v_n}\,p_n^{\star l_n}(t)-\ee^{-\frac{(t-n)^2}{2l_n v_n}}\bigg|=0
  \end{equation*}
  and
  \begin{equation*}
  \adjustlimits\lim_{n\uparrow\infty}\sup_{t\in\N}\bigg|\sqrt{2\pi l_n w_n}\,q_n^{\star l_n}(t)-\ee^{-\frac{(t-l_nm_n)^2}{2l_n w_n}}\bigg|=0\,,
\end{equation*}
where $p_n^{\star l}$ and $q_n^{\star l}$ denote the $l$-fold
convolution of $p_n$ and $q_n$, respectively. The first of these
limits shows that
\begin{align}
  \nonumber
  \prob\big[S_{l_n}=n\big]&=\ee^{\xi_n n}\,\Ex\bigg[\mathds{1}_{\{S_{l_n}=n\}}\prod_{i=1}^{l_n}\ee^{-\xi_n T_i}\bigg]\\
  &=\ee^{\xi_n n}\,\Ex\big[\ee^{-\xi_n T_1}\big]^{l_n} p_n^{\star l_n}(n)\sim\frac{\ee^{\xi_n n}}{\sqrt{2\pi l_n v_n}}\,\Ex\big[\ee^{-\xi_n T_1}\big]^{l_n}\,,
\label{eq:big-jump-4}
\end{align}
while the second yields
\begin{align}
  \nonumber
\prob\big[M_n\le \lambda_n,\,S_{l_n}=n\big]&=\ee^{\xi_n n}\,\Ex\bigg[\mathds{1}_{\{S_{l_n}=n\}}\prod_{i=1}^{l_n}\mathds{1}_{\{T_i\le \lambda_n\}} \ee^{-\xi_n T_i}\bigg]\\
\nonumber
&=\ee^{\xi_n n}\,\Ex\Big[\mathds{1}_{\{T_1\le \lambda_n\}} \ee^{-\xi_n T_1}\Big]^{l_n} q_n^{\star l_n}(n)\\
&\sim\frac{\ee^{\xi_n n-\frac{(n-l_nm_n)^2}{2l_n w_n}}}{\sqrt{2\pi l_n w_n}}\,\Ex\Big[\mathds{1}_{\{T_1\le \lambda_n\}} \ee^{-\xi_n T_1}\Big]^{l_n}\,.
\label{eq:big-jump-5}
\end{align}
Dividing \eqref{eq:big-jump-5} by \eqref{eq:big-jump-4} we obtain
\eqref{eq:big-jump-3} because $\lim_{n\uparrow\infty}w_n/v_n=1$ and,
thanks to the hypothesis that
$\lim_{n\uparrow\infty}\sqrt{n}\,\Ex[\mathds{1}_{\{T_1>\lambda_n\}}T_1\ee^{-\xi_nT_1}]=0$,
because $\lim_{n\uparrow\infty}(n-l_nm_n)/\sqrt{n}=0$.  In fact, for
all $n\in\N$ we have
\begin{equation*}
  n-l_nm_m
  =\frac{l_n\Ex[\mathds{1}_{\{T_1> \lambda_n\}} T_1\ee^{-\xi_nT_1}]-n\Ex[\mathds{1}_{\{T_1>\lambda_n\}} \ee^{-\xi_nT_1}]}{\Ex[\mathds{1}_{\{T_1\le \lambda_n\}} \ee^{-\xi_nT_1}]}\,.
\end{equation*}

We are now ready to prove \eqref{eq:big-jump-1} and
\eqref{eq:big-jump-2}. We get \eqref{eq:big-jump-1} from
\eqref{eq:big-jump-3} once the choice
$\lambda_n=(1/\xi_n+\epsilon)\log n$ is made. Indeed, such $\lambda_n$
entails that both the limits
$\lim_{n\uparrow\infty}\sqrt{n}\,\Ex[\mathds{1}_{\{T_1>\lambda_n\}}T_1
  \ee^{-\xi_n T_1}]=0$ and
$\lim_{n\uparrow\infty}l_n\Ex[\mathds{1}_{\{T_1>\lambda_n\}}
  \ee^{-\xi_n T_1}]=0$ hold true, which are immediate from the bound
$\Ex[\mathds{1}_{\{T_1>\lambda_n\}} T_1\ee^{-\xi_n
    T_1}]\le(\lfloor\lambda_n\rfloor+1)\ee^{-\xi_n\lambda_n}/(1-\ee^{-\xi_n})^2$
valid for all $n\in\N$. Regarding \eqref{eq:big-jump-2}, suppose
without restriction that $\epsilon f\circ\imath_\rho(\sup R)<1/2$ and take
$\lambda_n=(1/\xi_n-\epsilon)\log n$ in \eqref{eq:big-jump-3}. Since
$\xi_n\lambda_n\ge[1-\epsilon f\circ\imath_\rho(\sup R)]\log n$ with
$1-\epsilon f\circ\imath_\rho(\sup R)>1/2$, we find
$\lim_{n\uparrow\infty}\sqrt{n}\,\Ex[\mathds{1}_{\{T_1>\lambda_n\}}T_1
  \ee^{-\xi_n T_1}]=0$, as it should be. Then, \eqref{eq:big-jump-3}
implies \eqref{eq:big-jump-2} since
$\lim_{n\uparrow\infty}l_n\Ex[\mathds{1}_{\{T_1>\lambda_n\}}
  \ee^{-\xi_n T_1}]=+\infty$, which is due to the inequality
$\Ex[\mathds{1}_{\{T_1>\lambda_n\}} \ee^{-\xi_n T_1}]\ge
p(\lfloor\lambda_n\rfloor+1)\ee^{-\xi_n(\lfloor\lambda_n\rfloor+1)}$
valid for all $n\in\N$ and to the fact that
$\lim_{n\uparrow\infty}(\log
n)^{\alpha+2}p(\lfloor\lambda_n\rfloor+1)=+\infty$ (see
\cite[Proposition 1.3.6]{bingham1989}).

\smallskip

\noindent \textit{Proof of part (ii).}  Suppose that $\rho_c>0$ and
fix a closed set $R\subset(0,\rho_c)$ and a number $\epsilon>0$. As
before, let $l_n\in nR\cap\N$ be such that
\begin{equation*}
  \sup_{l\in nR\cap\N}\,\prob\bigg[\bigg|\frac{M_n}{n}-\bigg(1-\frac{l/n}{\rho_c}\bigg)\bigg|>\epsilon\bigg|S_l=n\bigg]=
  \prob\bigg[\bigg|\frac{M_n}{n}-\bigg(1-\frac{l_n/n}{\rho_c}\bigg)\bigg|>\epsilon\bigg|S_{l_n}=n\bigg]\,.
\end{equation*}
Part $(ii)$ of the proposition is demonstrated if we show that
\begin{equation}
  \label{eq:big-jump-100}
 \lim_{n\uparrow\infty}\prob\big[M_n>n-l_n/\rho_c+\epsilon n,\big|S_{l_n}=n\big]=0
\end{equation}
and 
\begin{equation}
  \label{eq:big-jump-101}
 \lim_{n\uparrow\infty}\prob\big[M_n\ge n-l_n/\rho_c-\epsilon n,\big|S_{l_n}=n\big]=1\,.
\end{equation}

To begin with, we note that $\rho_c>0$ means
$\Ex[T_1]=1/\rho_c<+\infty$, so we have the following local LDP (see
\cite[Theorem 1]{doney1989}): for all $\delta>0$
\begin{equation}
  \label{precise_LDP}
  \adjustlimits\lim_{l\uparrow\infty}\sup_{n\ge (1/\rho_c+\delta)l}\bigg|\frac{\prob[S_l=n]}{lp(\lfloor n-l/\rho_c\rfloor)}-1\bigg|=0\,.
\end{equation}
Since the condition $l_n\in nR\cap\N$ implies that
$(1/\rho_c+\delta)l_n\le n$ with $\delta:=1/\sup R-1/\rho_c>0$,
(\ref{precise_LDP}) shows that $\prob[S_{l_n}=n]\sim l_n p(\lfloor
n-l_n/\rho_c\rfloor)$.  Exploiting the fact that $\ell$ is assumed to
be a slowly varying function at infinity defined on $[1,+\infty)$, in
  order to simplify the formulas we extend $p$ to all real numbers
  $z\ge 1$ by setting $p(z):=\ell(z)/z^{\alpha+1}$. In this way,
  \cite[Theorem 1.5.2]{bingham1989} allows us to conclude that
  \begin{equation*}
    \prob\big[S_{l_n}=n\big]\sim l_n p\big(n-l_n/\rho_c\big)\,.
  \end{equation*}
Setting $z_n:=n-l_n/\rho_c$ for brevity, \eqref{eq:big-jump-100} and
\eqref{eq:big-jump-101} become then equivalent to the following
results to be verified:
\begin{equation}
  \label{eq:big-jump-102}
 \lim_{n\uparrow\infty}\frac{\prob[M_n>z_n+\epsilon n,\,S_{l_n}=n]}{l_np(z_n)}=0
\end{equation}
and 
\begin{equation}
  \label{eq:big-jump-103}
  \liminf_{n\uparrow\infty}\frac{\prob[M_n>z_n-\epsilon n,\,S_{l_n}=n]}{l_np(z_n)}\ge 1\,.
\end{equation}

Let us prove \eqref{eq:big-jump-102} and \eqref{eq:big-jump-103}. Here
and below we tacitly use the fact that $l_n\le n$, as well as the
facts that $\lim_{n\uparrow\infty}l_n=\infty$ and
$\lim_{n\uparrow\infty}z_n=+\infty$.  For every $n\in\N$ we have
\begin{align}
  \nonumber
 \prob\big[M_n>z_n+\epsilon n,\,S_{l_n}=n\big]&\le\prob\Big[\max\{T_1,\ldots,T_{l_n}\}>z_n+\epsilon l_n,\,S_{l_n}=n\Big]\\
  \nonumber
  &\le \sum_{i=1}^{l_n}\prob\big[T_i> z_n+\epsilon l_n,\,S_{l_n}=n\big]\\
   \nonumber
  &=l_n\sum_{t\in\N}\mathds{1}_{\{t> z_n+\epsilon l_n\}}p(t)\prob\big[S_{l_n-1}=n-t\big]\\
   &\le l_n\prob\big[S_{l_n-1}\le n-z_n-\epsilon l_n\big]\sup_{z\ge z_n}p(z)\,.
   \label{eq:big-jump-104}
\end{align}
But $\sup_{z\ge z_n}p(z)\sim p(z_n)$ (see \cite[Theorem
  1.5.3]{bingham1989}) and the law of large numbers assures us that
$\lim_{n\uparrow\infty}\prob[S_{l_n-1}\le l_n/\rho_c-\epsilon l_n
]=0$. Thus, \eqref{eq:big-jump-104} immediately yields
\eqref{eq:big-jump-102}.

Regarding \eqref{eq:big-jump-103}, we suppose without restriction that
$2\epsilon\le 1-\sup R/\rho_c$. This gives $z_n:=n-l_n/\rho_c\ge
n(1-\sup R/\rho_c)\ge 2\epsilon n$ and, as a consequence,
$z_n-\epsilon n=z_n/2+z_n/2-\epsilon n\ge z_n/2$, so for every
$n\in\N$ we can state that
\begin{align}
  \nonumber
   \prob\big[M_n\ge z_n-\epsilon n,\,S_{l_n}=n\big]&\ge \sum_{i=1}^{l_n}\prob\big[T_i\ge z_n-\epsilon n,\,S_{l_n}=n\big]\\
  \nonumber
  &\quad-\sum_{i=1}^{l_n-1}\sum_{j=i+1}^{l_n}\prob\Big[T_i\ge z_n-n\epsilon,\,T_j\ge z_n-\epsilon n,\,S_{l_n}=n\Big]\\
  \nonumber
  &\ge l_n\prob\big[T_{l_n}\ge z_n-\epsilon l_n,\,S_{l_n}=n\big]\\
  &\quad -l_n^2\prob\Big[T_{l_n-1}\ge z_n/2,\,T_{l_n}\ge z_n/2,\,S_{l_n}=n\Big]\,.
  \label{eq:big-jump-105}
\end{align}
The last term in \eqref{eq:big-jump-105} can be bound as follows:
\begin{align}
  \nonumber
  &\prob\Big[T_{l_n-1}\ge z_n/2,\,T_{l_n}\ge z_n/2,\,S_{l_n}=n\Big]\\
  \nonumber
  &\qquad=\sum_{t'\in\N}\sum_{t''\in\N}\mathds{1}_{\{t'\ge z_n/2\}}\mathds{1}_{\{t''\ge z_n/2\}}p(t')p(t'')\prob\big[S_{l_n-2}=n-t'-t''\big]\\
  \nonumber
  &\qquad\le \Big[\sup_{z\ge z_n/2}p(z)\Big]^2\sum_{t\in\N}(t-1)\prob\big[S_{l_n-2}=n-t\big]\le n\Big[\sup_{z\ge z_n/2}p(z)\Big]^2\,.
\end{align}
So, using that $\sup_{z\ge z_n/2}p(z)\sim p(z_n/2)\sim
2^{\alpha+1}p(z_n)=(2/z_n)^{\alpha+1}\ell(z_n)$ (see \cite[Theorems
  1.5.3 and 1.5.2]{bingham1989}), we see that
\begin{equation*}
  \lim_{n\uparrow\infty} \frac{l_n\prob[T_{l_n-1}\ge z_n/2,\,T_{l_n}\ge z_n/2,\,S_{l_n}=n]}{p(z_n)}=0
\end{equation*}
because either $\alpha>1$ and $\ell(z)\le z^{(\alpha-1)/2}$ for all
sufficiently large $z$ (see \cite[Proposition 1.3.6]{bingham1989}) or
$\alpha=1$ and $\lim_{z\uparrow+\infty}\ell(z)=0$ as
$\Ex[T_1]<+\infty$. In this way, in order to prove
\eqref{eq:big-jump-103} from \eqref{eq:big-jump-105} it remains to
show that
\begin{equation}
  \liminf_{n\uparrow\infty} \frac{\prob[T_{l_n}\ge z_n-\epsilon l_n,\,S_{l_n}=n]}{p(z_n)}\ge 1\,.
\label{eq:big-jump-106}
\end{equation}
To this aim, pick a number $\delta>1$ and, recalling that $z_n\ge
2\epsilon n$, note that $z_n+2\epsilon(\delta-1) l_n\le \delta
z_n$. For all $n\in\N$ we have
\begin{align}
  \nonumber
  &\prob\big[T_{l_n}\ge z_n-\epsilon l_n,\,S_{l_n}=n\big]\\
  \nonumber
  &\qquad=\sum_{t\in\N}\mathds{1}_{\{t\ge z_n-\epsilon l_n\}}p(t)\prob\big[S_{l_n-1}=n-t\big]\\
 \nonumber
 &\qquad\ge \sum_{t\in\N}\mathds{1}_{\{z_n-\epsilon l_n\le t\le z_n+2\epsilon(\delta-1) l_n\}}p(t)\prob\big[S_{l_n-1}=n-t\big]\\
 \nonumber
  &\qquad\ge  \prob\Big[n-z_n-2\epsilon(\delta-1) l_n\le S_{l_n-1}\le n-z_n+\epsilon l_n\Big]\inf_{z\in[1,\delta z_n]}p(z)\,.
\end{align}
The law of large numbers yields
$\lim_{n\uparrow\infty}\prob[l_n/\rho_c-2\epsilon(\delta-1) l_n\le
  S_{l_n-1}\le l_n/\rho_c+\epsilon l_n]=1$, so using that
$\inf_{z\in[1,\delta z_n]}p(z)\sim p(\delta z_n)\sim
p(z_n)/\delta^{\alpha+1}$ (see \cite[Theorems 1.5.3 and
  1.5.2]{bingham1989}) we find
\begin{equation*}
  \liminf_{n\uparrow\infty} \frac{\prob[T_{l_n}\ge z_n-\epsilon l_n,\,S_{l_n}=n]}{p(z_n)}\ge\frac{1}{\delta^{\alpha+1}}\,.
\end{equation*}
This demonstrates \eqref{eq:big-jump-106} because $\delta$ can be
taken arbitrarily close to 1.

\section*{Acknowledegements}
G.G. acknowledges the support of the Cariparo Foundation.

\end{document}